\documentclass{article}
\usepackage{amsfonts,amsmath,amssymb,amsthm,bbm}
\usepackage{float,mathtools}
\usepackage[shortlabels]{enumitem}
\usepackage[ruled]{algorithm}
\usepackage{mathrsfs}  
\usepackage{algpseudocode}
 \usepackage[numbers]{natbib}
 \usepackage{chngcntr}
\usepackage{apptools}

\newcommand{\cK}{\mathcal{K}}
\newcommand{\cN}{\mathcal{N}}

\usepackage{multirow}
\usepackage{array}
\newcolumntype{C}[1]{>{\centering\arraybackslash}p{#1}}

\usepackage{color}

\newtheorem{example}{Example}
\newtheorem{theorem}{Theorem}[section]
\newtheorem*{theorem*}{Theorem}
\newtheorem{lemma}{Lemma}[section]
\newtheorem{proposition}{Proposition}[section]

\newtheorem{assumption}{Assumption}

\providecommand{\keywords}[1]{\textbf{\textit{Keywords---}} #1}

\newtheorem{manualassumptioninner}{Assumption}
\newenvironment{manualassumption}[1]{%
  \renewcommand\themanualassumptioninner{#1}%
  \manualassumptioninner
}{\endmanualassumptioninner}

\newtheorem{remark}{Remark}

\DeclareMathOperator*{\argmin}{argmin}

\usepackage{graphicx}
\usepackage{authblk}

\title{Conformal Prediction for Dyadic Regression Under Complex Missingness}
\author[1]{Robert Lunde}
\author[1]{Minjie Yang}
\author[2]{Elizaveta Levina}
\author[2]{Ji Zhu}

\affil[1]{Department of Statistics and Data Science, Washington University in Saint Louis}
\affil[2]{Department of Statistics, University of Michigan}

\date{}

\begin{document}

\maketitle

\begin{abstract}
We develop a framework for conformal prediction in dyadic 
regression problems under complex missingness mechanisms. 
At the theoretical level, we develop general technical tools for establishing finite-sample validity of conformal prediction under distributional invariance conditions weaker than exchangeability. A key result handles the 
case where the sample itself is a random subset of the 
index set, a setting not covered by existing theory. In addition, we propose conformal prediction procedures for jointly exchangeable arrays, 
including full conformal, split conformal, a row-column 
approach exploiting similarities within rows and columns, and a selective conformal procedure achieving 
mask-conditional validity. For missing elements, we 
establish asymptotic validity of a weighted 
conformal procedure under a nonparametric graphon model for the missingness mechanism. We further establish conditional
validity results for both continuous and discrete
responses; to the best of our knowledge, this is the first formal proof of asymptotic conditional validity for weighted conformal prediction under a missing-not-at-random assumption. The proposed methods are illustrated 
on synthetic and real-world network data.
\end{abstract}

\keywords{Conformal Prediction, Dyadic Regression, 
Missing Data, Joint Exchangeability, Beyond Exchangeability, Network Data, Graphon}

\section{Introduction}
A fundamental problem in network analysis is dyadic regression, where the goal is to predict the value of entries in a matrix $Y = (Y_{ij})_{i,j \in [n]}$, which summarizes relations (edges) between $n$ objects (nodes).  In general, this task can be accomplished using both node and edge covariates $(X_i,X_j, E_{ij})_{i,j \in [n]}$, and node and edge summary statistics $\mathcal{Z} = (\mathcal{Z}_i,\mathcal{Z}_j, \mathcal{Z}_{ij})_{i,j \in [n]}$ computed from $V =(Y_{ij},X_i,X_j, E_{ij})_{i,j \in [n]}$. One important special case is network link prediction, where $Y$ is binary. Link prediction methods have been used to predict interactions between proteins \citep{proteinlinkprediction}, model interactions between drugs \citep{drug-link-prediction}, and predict associations in criminal networks \citep{10.1371/journal.pone.0154244}, to name a few applications.  Dyadic regression has also been applied to modeling trade networks between countries in economics \citep{GRAHAM202023}. 

While many approaches have been developed to generate predictions in dyadic regression problems, uncertainty quantification for such predictions has proven to be challenging for several reasons.  As in all  problems involving network data, elements of $Y$ typically exhibit a complex dependence structure.  In addition, with dyadic regression, missing data is often the norm rather than the exception, and the missingness mechanism itself can be highly complex.  For example, in link prediction problems, missingness may depend on the underlying probability of edge formation \citep{doi:10.1080/10618600.2017.1286243}.  In these cases, even the choice of the pair of nodes to predict the link for may depend on whether the observation was missing, which leads to additional complications for uncertainty quantification.  

For a large class of network-assisted regression problems where the response is a node-level quantity, \citet{lunde2023conformal} showed that conformal prediction is finite-sample valid under a joint exchangeability assumption and mild conditions on network summary statistics.  Joint exchangeability is a natural invariance assumption for network data, which, at a high level, asserts that the distribution is invariant under node  relabeling.  This assumption is also natural in the dyadic regression setting.

In this paper, we propose a conformal prediction algorithm for attaching uncertainty to predictions made in dyadic regression, including link prediction, and study its properties under a joint exchangeability assumption on the array $(V,M)$, which includes $M$, the binary matrix with $M_{kl} = 1$ indicating that 
the pair $(k,l)$ is observed and $M_{kl} = 0$ indicating that it is not.               
We show that appropriate network variants of conformal prediction have the following finite-sample guarantee: for any $(k,l)$ with $k \neq l$, 
\begin{align*}
P\left(Y_{kl} \in \widehat{C}_{kl} \ \bigr\rvert \ M_{kl} = 1 \right) \geq 1-\alpha , 
\end{align*}
where the prediction set $\hat{C}$ is constructed using nonconformity scores that were also sampled according to the matrix $M$.  When the nonconformity scores themselves have an array structure, there are several possibilities for which scores to use for calibration.  

For instance, one may consider using all of the observed upper triangular elements for symmetric arrays.  On the other hand, elements in the same row or column share a node and therefore, these elements may be more similar to each other compared to arbitrary array elements.  In the jointly exchangeable array setting, we propose three ways of constructing conformal prediction sets that, in one form or another, make use of all of the sampled elements and establish finite-sample validity for all methods.  Along the way, we develop general theoretical tools for conformal prediction under distributional invariance 
conditions weaker than exchangeability. These results, presented in Section \ref{sec-beyond-exchangeability}, are of substantial independent 
interest beyond the array prediction setting and establish superuniformity under conditions not previously considered in the literature. To the best of our knowledge, these are also the first results to establish finite-sample validity 
of conformal prediction under general symmetry conditions imposed jointly on the data and the missingness mechanism.  The key challenge is that existing proof strategies fail when observations can leave the sample under group transformations.      

While prediction sets for sampled elements of the matrix that are valid for any effective sample size are valuable, one often wants to predict array elements that are known to be missing.  If the missingness mechanism is not known, one cannot expect to construct finite-sample valid prediction intervals in this setting.  However, we show that, under a mild graphon missingness assumption, a variant of weighted conformal prediction in which weights are estimated using graphon estimation yields asymptotically valid prediction sets for missing observations if the number of missing entries is not too large.  Moreover, with a proper choice of nonconformity score, under appropriate conditions, we will be able to achieve an asymptotic coverage guarantee conditional on a predicted value of $Y_{kl}$, denoted $\hat{Y}_{kl}$, along with the event that observation $(k,l)$ is missing:  
\begin{align*}
\liminf_{n \rightarrow \infty} P\left(Y_{kl} \in \widehat{C}_{kl}^{\mathrm{weighted}} \ \bigr\rvert \ M_{kl} = 0, \hat{Y}_{kl} \geq c \right) \geq 1-\alpha.
\end{align*} 
Here, $\widehat{C}_{kl}^{\mathrm{weighted}}$ is a set constructed based on weighted conformal prediction to be defined later, and $\hat{Y}_{kl}$ is a prediction of $Y_{kl}$ based on some model, which does not need to be specified correctly.  As we discuss in Section \ref{sec-weighted-conformal}, such a guarantee allows one to choose elements with ``interesting" predicted values as test points, essentially providing asymptotically valid post-selection inference.

Importantly, this case falls into the 
missing not at random (MNAR) category, since the probability 
of missingness can depend on unobserved random variables. 
Standard approaches to missing data that assume 
missingness at random (MAR) are therefore not  
applicable, and the joint exchangeability structure 
 plays a crucial role in enabling valid 
inference in this setting.

\section{Related Work}
\label{sec:related-work}

\subsection{Conformal Prediction and Predictive Inference}
Conformal prediction, pioneered by Vovk and colleagues in the late 1990s, has become a widely used tool for uncertainty quantification in statistics and machine learning; see, for example, \citet{vovk-algorithmic-world} and \citet{angelopoulos2021gentle} for overviews. In particular, conformal prediction has recently been considered in various network settings.  \citet{huang2023uncertainty} and \citet{Zargarbashi-gnn} consider node-level response variables, where the test set is randomly selected from a potential pool of calibration and test points. Since they do not assume exchangeability, their guarantees differ from the usual finite-sample marginal coverage guarantees available under exchangeability, and do not apply to a fixed pre-specified test point; these arguments also do not directly extend to situations where test points are chosen adaptively, which we consider in Section \ref{sec-weighted-conformal}. \citet{lunde2023conformal} consider a form of exchangeability for such problems, which is satisfied for a wide range of data-generating mechanisms, and establish guarantees that are analogous to those for i.i.d.\ data. \citet{lunde2023validityconformalpredictionnetwork} further extend these results to various settings involving non-uniform network sampling, including snowball sampling and random walk sampling.

There has also been some work on conformalized link prediction and matrix prediction. In Section \ref{sec-weighted-conformal}, we consider a weighted conformal prediction procedure that is similar to the one studied by \citet{gui2023conformalized}. These authors consider matrix completion in a setting where the underlying matrix is deterministic and entries of the matrix are revealed via independent Bernoulli sampling. Since no form of exchangeability is imposed on the underlying matrix, the authors weigh the nonconformity score for the test point using the worst-case probability over all entries, which is likely overly conservative when additional structure is available. Moreover, their coverage guarantee requires controlling the 
maximum entrywise estimation error of the sampling probabilities, which is a more demanding condition than our Frobenius-norm 
requirement (condition (c) of Theorem 
\ref{theorem-graphon-missigness-unconditional}).  \citet{Liang28042026} further develop this approach, establishing simultaneous group-level guarantees under the assumption of a fixed underlying matrix with known missingness probabilities. Their weighted conformal procedure achieves finite-sample validity but requires exact knowledge of the missingness structure, and because the underlying data are treated as fixed, the analysis does not use the symmetry structure available under joint exchangeability. In contrast, our framework handles unknown missingness mechanisms through the joint exchangeability structure and establishes validity for each individual test point, which is crucial for the adaptively chosen test points we consider in Section \ref{sec-weighted-conformal}.

\citet{shao2023distributionfreematrixpredictionarbitrary} consider conformal prediction with a jointly exchangeable matrix $A$ and a masking matrix $M$. They allow $M$ to be arbitrary and even adversarial,  but they require the
test point to be fixed and independent of $A$, while we instead make mild structural assumptions on $M$ but allow it to depend on $A$, covering cases where the test point is chosen adaptively.      
\citet{du2025conformallinkpredictionfalse} further consider conformalized link prediction under graphon models with the goal of FDR control for multiple missing links. Their guarantees rely on $M$ being independent of $A$ over the entire matrix,
which is a stronger independence requirement than the single-index condition in
\citet{shao2023distributionfreematrixpredictionarbitrary}, and this work is not applicable to all of the settings we consider, including the data-driven test point choice setting.  

\citet{marandon-2024} consider conformalized link prediction under a missingness mechanism that is a special case of the jointly exchangeable structure we study when the response is exchangeable.   The exchangeability they require is only achievable  when the data are fixed and the test points are sampled at random. 
\citet{pmlr-v204-luo23a} consider detection of anomalous edges under the assumption of edge exchangeability. Edge exchangeability can be a natural assumption in some problems, but joint exchangeability is often a more natural notion of symmetry for array prediction tasks that subsumes many commonly used models; see Example \ref{ex-hyperedge-sampling} for further discussion. \citet{pmlr-v202-zaffran23a} consider conformal prediction sets with missing covariates. Although their problem differs from ours, they discuss a notion of mask-conditional validity, which requires conformal prediction sets to be valid given $M$. The authors establish finite-sample validity in cases where the response does not depend on $M$. In contrast, we study cases where the response can depend on $M$, but we do not condition on $M$; in general, this is too much to ask for with jointly exchangeable arrays, which have less structure than exchangeable sequences.

Our work is also related to recent work on predictive inference 
under symmetry assumptions. Our results imply that one can 
directly use conformal prediction in various settings with group 
structure studied in \citet{dobriban2025symmpi}, 
who propose SymmPI, a framework based on randomization tests to construct prediction sets. As 
noted by \citet{ritzwoller2025randomizationinferencetheoryapplications}, 
when group invariance holds, randomization tests offer finite-sample 
validity. Their approach requires recomputing the test statistic at each transformed 
configuration, which typically is not possible when observations 
leave the sample under group transformations, as in the setting of 
Theorem \ref{theorem-super-unform-be}. Their treatment of 
network-structured data further requires the random variables 
associated with the network to be invariant under the automorphism 
group of a fixed network. This condition does not 
account for the randomness in how the network itself was 
generated, unlike joint exchangeability, which is satisfied 
naturally by a broad class of network models. As 
the authors note, determining the automorphism group of a graph is, 
in general, a hard computational problem; even when it can be 
computed efficiently, it is frequently trivial for randomly 
generated networks with little or no graph symmetry, which limits 
the applicability of their approach to the problems we consider. \citet{conditional_symmpi_2026} extend the SymmPI framework to 
near-conditional coverage under group symmetries using a 
multi-accuracy formulation; their framework does not address the 
missing-element setting we consider.

\citet{paul2026probability} develop probability inequalities for order statistics under violations of the i.i.d.\ assumption. While their framework handles approximate violations without structural assumptions, our results exploit the joint exchangeability structure to obtain exact finite-sample guarantees in the array prediction setting. \citet{unifying-conformal-2025} develop a unified framework for conformal prediction methods based on partial information revelation controlled by some random variable, recovering and extending several existing results (e.g., standard conformal prediction  when order statistics of scores are the information revealed). While their framework provides a conceptual unification of methods including weighted and nonexchangeable conformal prediction,  jointly exchangeable arrays and random sampling mechanisms are outside their scope. \citet{huang2026dataaugmentedbootstrapunifying} proposes a bootstrap framework 
for confidence interval construction from approximately 
invariant transformations, recovering conformal 
prediction under exact cyclic group invariance as a 
special case; their approximate invariance results 
rely on Gaussian universality conditions that do not 
apply to conformal prediction, and their framework 
does not address the random sampling mechanisms or 
missing element setting we consider.

\subsection{Dyadic Regression and Link Prediction}
Dyadic regression problems frequently arise in the social sciences, where these models have been used to model trade flows \citep{https://doi.org/10.1002/tie.5060050113}, international relations \citep{Oneal_Russett_1999}, and cooperation in hunter-gatherer groups \citep{apicella-hunter-gatherer}, among others; see \citet{GRAHAM202023} for an overview. In recent years, there has been work on theoretical properties of kernel regression for nonparametric dyadic regression problems \citep{GRAHAM2024105336}, and on extending classical regression inference to this setting \citep{MarrsFosdick2023}.

One special case of dyadic regression is link prediction, which has been studied in multiple disciplines; see, for example, \citet{10.1145/3012704} for a review.  Classical approaches to link prediction often involve constructing measures of node similarity and ranking the likelihood of an edge based on the similarity of its two incident nodes. Many existing methods do not directly take into account the fact that observed entries may not be representative, though there is some work in the matrix completion and prediction literature on  estimation under this non-representative sampling \citep{9762725, GAUCHER2021299}. 

\section{Problem Setup}
\label{sec-problem-setup}
\subsection{Jointly Exchangeable Arrays}
Our key symmetry assumption is made jointly on $V =(Y_{ij}, X_i,X_j,E_{ij})_{i,j \in [n]}$ and $M$. For an array $B = (B_{ij})_{i,j \in [n]}$ and a permutation function $\sigma: [n] \mapsto [n]$, let $B^{\sigma} = (B_{\sigma(i), \sigma(j)})_{i,j \in [n]}$.  We make the following assumption: 

\begin{assumption}[Joint Exchangeability]
\label{assumption-je1}
For any permutation $\sigma:[n] \mapsto [n]$, 
\begin{align*}
(V^\sigma, M^{\sigma}) \stackrel{d}{= } (V,M).
\end{align*}
\end{assumption}
\noindent It may not be immediately obvious what this assumption implies about the masking matrix.  The following examples suggest that this is a mild assumption that covers many natural sampling mechanisms.
\begin{example}[Sampling promising interactions]
\label{ex-promising-interactions}
Suppose that $V$ is jointly exchangeable and:
\begin{align*}
M_{ij} = \mathbbm{1} \{ f(X_i, X_j, \hat{Z}_i, \hat{Z}_j, \hat{Z}_{ij}, E_{ij}) \geq c \}
\end{align*}
for some $f$ and network summary statistics $\hat{Z}_1, \ldots, \hat{Z}_n, (\hat{Z}_{ij})_{1 \leq i, j \leq n}$ satisfying a mild regularity condition (Assumption \ref{assumption-network-covariates}). Under Assumption \ref{assumption-network-covariates}, 
joint exchangeability of $(V,M)$ follows immediately 
from that of $V$ via Proposition 
\ref{theorem-exchangeability-transformations}. 

In this example, $f$ can be thought of as a function estimated on an independent dataset that determines whether a new interaction is interesting or not. 

\end{example}

\begin{example}[Graphon missingness]
Suppose $Y_{ij} = g(\xi_i,\xi_j) + \epsilon_{ij}$, where  $\xi_1, \ldots, \xi_n$ are unobserved, i.i.d.\ latent variables, the variables $(\epsilon_{ij})_{1 \leq i,j \leq n}$ are jointly exchangeable, and  
\begin{align*}
M_{ij} \sim \mathrm{Bernoulli}(w(\xi_i,\xi_j)),
\end{align*}
for a graphon $w:[0,1]^2 \mapsto [0,1]$.  See Section \ref{sec-weighted-conformal} for an example of this setting.  Joint exchangeability of $(V,M)$ may be immediately proven via Proposition 
\ref{theorem-exchangeability-transformations}.       
\end{example}

\begin{example}[Sampling via hyperedge sequences]
\label{ex-hyperedge-sampling}  An observed matrix sometimes arises from multi-way interactions, which can be modeled as hyperedges in a hypergraph.  Let $\mathcal{V} \subseteq \mathbb{N}$ be a finite node set and let $\mathscr{H}_i  \subseteq \mathcal{V}, i \in [m]$ denote a random hyperedge sequence. Let $h_i =  \{v_1, \ldots, v_{k}\}$ denote a potential realization of $\mathscr{H}_i$, 
and consider $h_i^\sigma = \{ \sigma(v_1), \ldots, \sigma(v_{k})\}$ 
for a permutation $\sigma: \mathcal{V} \mapsto \mathcal{V}$.  Suppose that 
\begin{align}
\label{eq-hyper-permutation}
P(\{\mathscr{H}_1 = h_1\} \cap \cdots \cap \{\mathscr{H}_m = h_m\}) = P(\{\mathscr{H}_1 = h_1^\sigma\} \cap \cdots \cap \{\mathscr{H}_m = h_m^\sigma\})  
\end{align}
for all permutations $\sigma$ and any $h_1, \ldots, h_m \subseteq \mathcal{V}$, and  
$$M_{jk} = \mathbbm{1} ( \{j,k\} \subseteq \mathscr{H}_i \text{ for some } i). $$
Then $(V,M)$ is jointly exchangeable (See Proposition \ref{prop-hyperedge-exchangeability} in the Appendix).   
\end{example}
 One intuitive case where assumption \eqref{eq-hyper-permutation} is reasonable is when there are i.i.d.\ latent random variables associated with each node and the probability of a hyperedge involving a given subset of nodes is a function of these latent positions. 
This example makes no assumption about the dependence between $\mathscr{H}_1, \ldots, \mathscr{H}_m$ under the symmetry assumption \eqref{eq-hyper-permutation}.  In contrast, exchangeable (hyper)edge models of \citet{doi:10.1080/01621459.2017.1341413} assume that the sequence $\mathscr{H}_1, \ldots, \mathscr{H}_m$ is exchangeable, but do not place symmetry conditions on the vertices. 

\subsection{Network Covariates}
We consider both nodal and edge network summary statistics, captured by the array $\mathcal{Z} = (\mathcal{Z}_i,\mathcal{Z}_j, \mathcal{Z}_{ij})_{i,j \in [n]}$. Let $\mathcal{X} = (X_i, X_j, E_{ij}, M_{ij})_{1 \leq i,j \leq n}$ so that $\mathcal{Z} = \zeta(\mathcal{X})$ for some measurable function $\zeta$.  
 We make the following assumption:
 \begin{assumption}[Network Covariates]
 \label{assumption-network-covariates}
 For any permutation $\sigma$,
 %
$\mathcal{Z}^{\sigma} = \zeta( \mathcal{X}^\sigma).$
 \end{assumption}

This condition is closely related to a condition imposed on network covariates in \citet{lunde2023conformal}. In the dyadic regression setting, $(\hat{Z}_{ij})_{1\leq i,j \leq n}$ may, for example, be predicted or imputed via matrix completion with or without covariate information. The only requirement for these network statistics is that if we permute the rows and columns of the underlying data array, the output would be permuted accordingly.

 \subsection{Prediction Model}
We also require that the fitted model is permutation invariant, meaning that the fitted values get permuted accordingly and are otherwise unchanged when the input data is relabeled.  This assumption is not needed for network-assisted regression studied in \citet{lunde2023conformal}, but becomes necessary when general exchangeable sampling mechanisms are permitted.

Let $\mathcal{H} = \{ (i,j) \ | \ M_{ij} =1 \}$ denote the sampled indices, and let
\begin{align*}
& \mathcal{T} = (V_{ij}, \mathcal{Z}_{ij})_{(i,j) \in \mathcal{H}},  \quad  \mathcal{T}^\sigma = (V_{ij}^\sigma, \mathcal{Z}_{ij}^\sigma)_{(i,j) \in \mathcal{H}^\sigma},  \\
& \mathcal{I} =(\mathcal{X}_{ij}, \mathcal{Z}_{ij}, M_{ij})_{(i,j) \in [n]^2},  
\quad  \mathcal{I}^\sigma =(\mathcal{X}_{ij}^\sigma, \mathcal{Z}_{ij}^\sigma, M_{ij}^\sigma)_{(i,j) \in [n]^2} , 
\end{align*}
where $\mathcal{H}^{\sigma} = \{(i,j) \ | \ M_{ij}^\sigma = 1\}$. 
Let $\boldsymbol{\hat{\mu}}_{ij} = \hat{\mu}(\mathcal{I}_{ij};\mathcal{T})$ denote the model output for $(i,j)$ and $\boldsymbol{\hat{\mu}} = (\boldsymbol{\hat{\mu}}_{ij})_{1 \leq i,j \leq n}$ denote the corresponding matrix of fitted values. In most cases, we will be interested in the behavior of fitted values for $(i,j) \in \mathcal{H}$.  In fact, $\hat{\mu}( \mathcal{I}_{ij} \ ; \mathcal{T})$ may output $\infty$ when $M_{ij} = 0$. 

\begin{assumption}[Permutation Invariant Model]
\label{assumption-permutation-invariant}
For any permutation $\sigma:[n] \mapsto [n]$,
$
\boldsymbol{\hat{\mu}}^\sigma =\left( \hat{\mu}(\mathcal{I}_{ij}^\sigma ; \mathcal{T}^\sigma )\right)_{(i,j) \in [n]^2}.
$
\end{assumption}
An analogous condition will be imposed for split conformal prediction; see Section \ref{sec:split-conformal}.  The permutation invariant model assumption is most natural when the fitted model is symmetric in arguments related to nodal covariates and node-level summary statistics.  When split conformal prediction with node splitting is considered, this condition is not necessary, and asymmetric models may also be used; these are  considered in more detail in Section \ref{subsec:row-column-conformal}.          

\section{Conformal Prediction for Sampled Elements}
\label{sec-conformal-for-sampled-elements}
\subsection{Full Conformal Prediction}
We now state our main results on validity of conformal prediction for sampled elements.  

Let $s: \mathbb{R} \times \mathbb{R} \mapsto \mathbb{R}$ denote a nonconformity score and let $S_{ij} = s(Y_{ij}, \boldsymbol{\hat{\mu}}_{ij})$ denote the score for the pair $(i,j)$.   Suppose that $S = (S_{ij})_{1 \leq i,j \leq n}$ is a symmetric array.   Define the set of upper diagonal elements sampled by $M$ as $\mathcal{H}' = \{(i,j) \ | \ 1 \leq i < j \leq n, M_{ij} = 1 \}$, and let
\begin{align*}
 \Pi_{kl} = \frac{1}{|\mathcal{H}'|} \sum_{(i,j) \in \mathcal{H}'}
 \mathbbm{1}(S_{ij} \geq S_{kl}).
 \end{align*}
The above expression is closely related to the rank of $S_{kl}$ among elements with indices in $\mathcal{H}'$. A (full) conformal prediction set based on $\Pi_{kl}$ is given by
\begin{align*}
\widehat{C}_{kl} = \{ y_{kl} \ | \ \Pi_{kl} > \alpha  \}. 
\end{align*}
In practice, this prediction set is approximated by evaluating $\Pi_{kl}$ over a grid of possible values for $y_{kl}$. Since $\hat{\mu}$ is fitted using the observation corresponding to $\{k,l\}$, this step involves repeatedly fitting the model over the possible values of $y_{kl}$.   Our first main result establishes the finite-sample validity of full conformal prediction in our setting.

\begin{theorem}[Finite-Sample Validity of Full Conformal Prediction]
\label{theorem-super-uniform-array}
Suppose that Assumptions \refeq{assumption-je1}, \refeq{assumption-network-covariates}, and \refeq{assumption-permutation-invariant} hold, and $(S_{ij})_{1 \leq i,j \leq n}$ is a symmetric array.  Then, for any $1 \leq k < l \leq n$ and any $t \in [0,1]$,
\begin{align*}
P\left( \Pi_{kl} \leq t \ \bigr\rvert \ M_{kl} = 1 \right) \leq t. 
\end{align*}
Consequently, for any $\alpha \in [0,1]$,
\begin{align*}
P\left( Y_{kl} \in \widehat{C}_{kl} \ \bigr\rvert \ M_{kl} = 1 \right) \geq 1-\alpha. 
\end{align*}
\end{theorem}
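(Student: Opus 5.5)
The plan is to reduce the statement to an averaging argument over all unordered pairs, combining a permutation-equivariance property of the scores with a deterministic counting bound on ranks. The key identity to aim for is
\begin{align*}
P\left(\Pi_{kl} \leq t,\ M_{kl}=1\right) \;=\; \binom{n}{2}^{-1} E\Bigl[\#\{(i,j)\in\mathcal{H}' : \Pi_{ij}\leq t\}\Bigr],
\end{align*}
after which a pathwise bound on the count finishes the proof. The coverage statement then follows by taking $t=\alpha$, since $Y_{kl}\notin\widehat{C}_{kl}$ exactly when $\Pi_{kl}\le\alpha$.

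\textbf{Step 1 (equivariance).} I will show that $(S,M)$ is jointly exchangeable, i.e.\ $(S^\sigma,M^\sigma)\stackrel{d}{=}(S,M)$. By Assumption \ref{assumption-network-covariates}, permuting $\mathcal{X}$ permutes $\mathcal{Z}$ accordingly. By Assumption \ref{assumption-permutation-invariant}, the fitted matrix computed from the permuted data $(\mathcal{I}^\sigma,\mathcal{T}^\sigma)$ equals $\boldsymbol{\hat\mu}^\sigma$. Hence the score array computed from $(V^\sigma,M^\sigma)$ is exactly $S^\sigma$, so $(S,M)=\Phi(V,M)$ for a measurable map $\Phi$ satisfying $\Phi(V^\sigma,M^\sigma)=\Phi(V,M)^\sigma$. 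Assumption \ref{assumption-je1} (or Proposition \ref{theorem-exchangeability-transformations}) then gives joint exchangeability of $(S,M)$.

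Next, $\Pi_{ij}$ should be a function of $(S,M)$ that is equivariant under relabeling: recomputing $\Pi$ from $(S^\sigma,M^\sigma)$ at pair $(i,j)$ should give $\Pi_{\sigma(i)\sigma(j)}$. This requires that $\mathcal{H}'$, defined via upper-triangular indices, is mapped to the corresponding set of unordered sampled pairs. For that I will use the symmetry of $S$ together with the fact that $M$ is symmetric, which is implicit in the undirected setup. Under these symmetries, $\Pi_{ij}$ depends only on the unordered pair $\{i,j\}$ and on the multiset $\{S_{ab}:\{a,b\}\text{ sampled}\}$.

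\textbf{Step 2 (all pairs have the same probability).} For any $i<j$, I will choose $\sigma$ with $\sigma(i)=k$ and $\sigma(j)=l$. Exchangeability from Step 1 then gives
\begin{align*}
P(\Pi_{kl}\le t,\ M_{kl}=1)=P(\Pi_{ij}\le t,\ M_{ij}=1).
\end{align*}
Averaging this equality over all $\binom n2$ unordered pairs yields the identity displayed above, with the random size $N=|\mathcal{H}'|$ appearing only inside the expectation.

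\textbf{Step 3 (deterministic rank bound).} I will show that, pathwise, $\#\{(i,j)\in\mathcal{H}':\Pi_{ij}\le t\}\le tN$. Order the $N$ sampled scores decreasingly, breaking ties arbitrarily. If $\Pi_{ij}\le t$, then at most $tN$ sampled scores are $\ge S_{ij}$, and this count includes $S_{ij}$ itself. So every such pair lies among the pairs whose score is strictly larger than all but at most $\lfloor tN\rfloor$ of the scores; with ties handled via the "$\ge$" convention, there are at most $\lfloor tN\rfloor$ such pairs. This is the standard argument that the empirical p-values in a finite sample are superuniform in count. When $N=0$ the event $M_{kl}=1$ is impossible, so nothing needs to be checked.

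\textbf{Step 4 (conclusion).} Combining Steps 2 and 3 gives
\begin{align*}
P(\Pi_{kl}\le t,\ M_{kl}=1)\le t\,E[N]\big/\tbinom n2 = t\,P(M_{kl}=1).
\end{align*}
Here the final equality again uses Step 2, applied to the events $\{M_{ij}=1\}$, since $E[N]=\sum_{i<j}P(M_{ij}=1)$. Dividing by $P(M_{kl}=1)$ yields the claim.

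\textbf{Main obstacle.} I expect the main difficulty to be Step 1, specifically the bookkeeping that makes $\Pi$ genuinely equivariant. Sampled indices can leave or enter $\mathcal{H}'$ under $\sigma$, and the upper-triangle convention must be reconciled with relabeling, which is where the symmetry of $S$ and $M$ is essential. I would handle this by writing $\Pi_{kl}$ explicitly as a function of the unordered-pair collection $\{(\{a,b\},S_{ab},M_{ab})\}$ and checking equivariance directly.
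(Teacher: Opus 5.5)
Your proposal is correct and is essentially the paper's argument: the paper verifies joint exchangeability of $(S,M)$ from Assumptions~\ref{assumption-je1}--\ref{assumption-permutation-invariant} and then invokes Theorem~\ref{theorem-super-unform-be}, whose second proof is precisely your averaging-plus-counting argument over unordered pairs (the paper breaks ties with auxiliary uniforms to get the exact count $\lfloor Kt\rfloor$, whereas your pathwise bound $\#\{(i,j)\in\mathcal{H}':\Pi_{ij}\le t\}\le tN$ handles ties directly). You are right to state the symmetry of $M$ explicitly: the reduction to unordered pairs needs it, the paper uses it only implicitly, and with $S$ symmetric but $M$ asymmetric the claim can fail.
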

\begin{remark}
\label{remark-diagonal-elements}
With jointly exchangeable arrays, in many settings diagonal elements may not be of interest, or they may behave differently from the off-diagonal elements, so these elements are excluded from $\mathcal{H}'$.  If they are of interest, one could construct conformal prediction sets based only on these elements.    
\end{remark}


The guarantee of Theorem \ref{theorem-super-uniform-array} is perhaps most natural in the setting of Example \ref{ex-promising-interactions}, where some ``promising'' observations were sampled, and while the response value for the test point is unobserved, the test pair is also promising according to the same criteria.  Such a result is similar in spirit to that of \citet{10.1093/biomet/asae010}, in which conformal prediction is performed subject to a selection rule acting on both the calibration and potential test points, on i.i.d.\  observations; the exchangeable array setting creates several additional challenges and subtleties.  

In particular, our result above does not directly follow from standard theory for conformal prediction.  First, while $(S_{ij})_{1 \leq i,j \leq n}$ is jointly exchangeable, it is not fully exchangeable as a set of data points; that would require distributional invariance under arbitrary swaps of elements in the array.  The result requires new theory that we have developed for conformal prediction beyond exchangeability,  presented in Section \ref{sec-beyond-exchangeability}.  Moreover, in the array setting, it is typically not possible to condition on the outcome of the sampling mechanism. 

\subsection{Split Conformal Prediction}
\label{sec:split-conformal}
In practice, split conformal prediction is more commonly used than full conformal prediction due to computational considerations.  In the array setting, there are many possible ways to split the sampled elements.  Let $M_{1}$ and $M_{2}$ be binary matrices where an entry is $1$ if the corresponding interaction is present in the training set or in the calibration set, respectively.    We assume that the test point is an arbitrary element of $M_2$; our results will hold for any pair in the calibration set. 

In what follows, let $\mathcal{N} \subseteq \{1,\ldots n\}$ denote the set of nodes that have a nonzero probability of belonging to an edge in the graph corresponding to $M_{2}$. For split conformal prediction, we consider the following exchangeability assumption:
\begin{manualassumption}{1'}[Jointly Exchangeable Splitting Mechanism]
\label{assumption-je1-prime} 

Consider the subgroup of permutations \begin{align*}
\Sigma_{\mathcal{N}} = \left\{ \sigma:[n] \mapsto [n] \ | \  \sigma(i) = i  \ \forall \ i \not \in \mathcal{N} \right\}.
\end{align*}
Suppose that for all $\sigma \in \Sigma_{\mathcal{N}}$,
\begin{align*}
(V^\sigma, M_1^\sigma, M_{2}^{\sigma}) \stackrel{d}{=} (V, M_{1}, M_2). 
\end{align*}
and additionally $M \geq M_{1} + M_2$, 
where $\geq$ corresponds to the element-wise order.    
\end{manualassumption}
In Assumption \ref{assumption-je1-prime}, the training set and calibration set are disjoint; however, this is not necessary for the corresponding nonconformity scores to be jointly exchangeable.  The assumption ensures that the test point, for which the response value is unknown, is not in the training set.  If this were not the case, then one is essentially back to the setting of full conformal prediction.  Next, we provide a few examples of such splitting mechanisms.

\begin{example}[Node Splitting]
Suppose that $D_1 \cup D_2$ forms a partition of $\{1, \ldots, n\}$. Let
\begin{align*}
M_{1}(i,j) = M_{ij} \mathbbm{1}(i,j \in D_1), \quad M_{2}(i,j) = M_{ij} \mathbbm{1}(i,j \in D_2).  
\end{align*}
If Assumption \ref{assumption-je1} is satisfied, then Assumption \ref{assumption-je1-prime} also holds.  
 
\end{example}

\begin{example}[Edge Splitting]
\label{ex-5-edge-splitting}
Consider the array $(\gamma_{ij})_{1 \leq i,j \leq n}$, 
where $\gamma_{ij} = \gamma_{ji} \sim \mathrm{Bernoulli}(p)$ are independent of all other random variables.  Let
\begin{align*}
M_1 = (\gamma_{ij} M_{ij})_{1 \leq i,j \leq n}, \quad M_2 = ( (1-\gamma_{ij}) M_{ij})_{1 \leq i,j \leq n}.   
\end{align*}
If Assumption \ref{assumption-je1} is satisfied, then so is \ref{assumption-je1-prime}. This also holds if the sampling probabilities depend on $V$.   

\end{example}

\begin{example}[Selecting a Node Based on Summary Statistics]
\label{ex:column-selection}
Let $\hat{Z}_1, \ldots, \hat{Z}_n$ be some nodal summary statistics satisfying Assumption \ref{assumption-network-covariates}, such as node degree.  Let $\mathrm{rank}(\hat{Z}_r, \{\hat{Z}_1, \ldots, \hat{Z}_n\}) =k$ for some pre-specified $1 \leq k \leq n$.  Let:
\begin{align*}
M_2(i,j) = M_{ij}\mathbbm{1}(j = \mathcal{R}), \quad M_1 = M-M_2 - M_2^T.  
\end{align*}
If Assumption \ref{assumption-je1} is satisfied, so is Assumption \ref{assumption-je1-prime}.  
\end{example}
\begin{remark}
\label{remark-rank-non-unique}
If ties are present and ranks are not uniquely defined, one can consider ranks with respect to a lexicographic order with tie-breaking random variables, which we discuss in Appendix \ref{sec:appendix-1}.
\end{remark}

Before stating our main result for split conformal prediction, we introduce an analogue of a permutation-invariant model with splitting.  Let 
\begin{align}
\mathcal{H}_1 = \{(i,j) \in [n]^2 \ | \ M_1(i,j) = 1 \}, \quad \mathcal{H}_2 = \{(i,j) \in [n]^2 \ | \ M_2(i,j) = 1 \} ,   
\end{align} 
and define the following arrays, corresponding to training data and model inputs:
\begin{align}
\label{eq-arrays-model}
\mathcal{T}^{\mathrm{split}} = (V_{ij}, \mathcal{Z}_{ij})_{(i,j) \in \mathcal{H}_1}, \quad \mathcal{I}^{\mathrm{split}} =(\mathcal{X}_{ij}, \mathcal{Z}_{ij}, M_2(i,j))_{(i,j) \in [n]^2}.
\end{align}
Let $\boldsymbol{\hat{\mu}}^{\mathrm{split}} \in \mathbb{R}^{n \times n}$ denote the matrix of fitted values, with 
\begin{align*}
\boldsymbol{\hat{\mu}}_{ij}^{\mathrm{split}} = \hat{\mu}( \mathcal{I}_{ij}^{\mathrm{split}} ; \ \mathcal{T}^{\mathrm{split}}). 
\end{align*}
For split conformal prediction, we make the following assumption: 
\begin{manualassumption}{3'}[Permutation Invariant Model for Split Conformal Prediction]
\label{assumption-permutation-invariant-split}
For any permutation $\sigma:[n] \mapsto [n]$,
\begin{align*}
\boldsymbol{\hat{\mu}}^{\mathrm{split}, \sigma} =\bigl( \hat{\mu}(\mathcal{I}_{ij}^{\mathrm{split}, \sigma}; \mathcal{T}^{\mathrm{split},\sigma} )\bigr)_{1 \leq i,j \leq n}.
\end{align*}
\end{manualassumption}
Let $\mathcal{H}_2' = \{(i,j)  \ | \ 1 \leq i < j \leq n,  M_2(i,j) = 1 \}$ and define 
\begin{align*}
\Pi_{kl}^{\mathrm{split}} = \frac{1}{|\mathcal{H}_2'|}\sum_{(i,j) \in \mathcal{H}_2'} \mathbbm{1}(S_{ij} \geq S_{kl}),
\end{align*}
where, analogously to full conformal prediction, $S_{ij} = s(Y_{ij}, \boldsymbol{\hat{\mu}}_{ij}^{\mathrm{split}})$.  A prediction set based on $\Pi_{kl}^{\mathrm{split}}$ is given by
\begin{align*}
\widehat{C}_{kl}^{\mathrm{split}} = \left\{ y_{kl} \ | \ \Pi_{kl}^{\mathrm{split}} > \alpha  \right\}.
\end{align*}
Split conformal prediction is computationally efficient since the model is fit only once.   This prediction set can be equivalently expressed in terms of sample quantiles of the nonconformity scores in the calibration set; for the reader's convenience, we state this explicitly in Appendix \ref{subsec-sample-quantiles}. 
\begin{theorem}[Finite-Sample Validity of Split Conformal Prediction]
\label{theorem-split-conformal}
Suppose that Assumptions \refeq{assumption-je1-prime}, \refeq{assumption-network-covariates}, and \refeq{assumption-permutation-invariant-split} hold. Then, for any $1 \leq k < l \leq n$ and $t \in [0,1],$
\begin{align*}
P\left( \Pi_{kl}^{\mathrm{split}} \leq t \ \bigr\rvert \ M_{2}(k,l) = 1 \right) \leq t. 
\end{align*}
Consequently, for any $\alpha \in [0,1],$
\begin{align*}
P\left(Y_{kl} \in \widehat{C}_{kl}^{\mathrm{split}} \ \bigr\rvert \ M_{2}(k,l) = 1 \right) \geq 1-\alpha.
\end{align*}
\end{theorem}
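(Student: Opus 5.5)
The plan is to reduce the statement to a deterministic counting bound, using the invariance in Assumption \ref{assumption-je1-prime} restricted to the subgroup $\Sigma_{\mathcal{N}}$. This is the same route I would take for Theorem \ref{theorem-super-uniform-array}, in the spirit of the general superuniformity result of Section \ref{sec-beyond-exchangeability}. If $P(M_2(k,l)=1)=0$ there is nothing to prove, so assume $k,l\in\mathcal{N}$.

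\textbf{Step 1 (equivariance of scores).} First I would show that for every $\sigma\in\Sigma_{\mathcal{N}}$,
\begin{align*}
(S^\sigma, M_2^\sigma) \stackrel{d}{=} (S, M_2).
\end{align*}
Assumption \ref{assumption-network-covariates} gives $\mathcal{Z}^\sigma=\zeta(\mathcal{X}^\sigma)$. Hence the arrays $\mathcal{T}^{\mathrm{split}}$ and $\mathcal{I}^{\mathrm{split}}$ built from $(V^\sigma,M_1^\sigma,M_2^\sigma)$ are exactly the permuted arrays $\mathcal{T}^{\mathrm{split},\sigma}$ and $\mathcal{I}^{\mathrm{split},\sigma}$. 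Assumption \ref{assumption-permutation-invariant-split} then says the fitted matrix obtained from the permuted data is $\boldsymbol{\hat{\mu}}^{\mathrm{split},\sigma}$. Consequently $(S,M_2)=F(V,M_1,M_2)$ for a measurable map $F$ that commutes with the action of $\sigma$, and Assumption \ref{assumption-je1-prime} transfers the distributional invariance to $(S,M_2)$.

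Next, write $\Pi_{ij}^{\mathrm{split}}=G_{ij}(S,M_2)$. Since $G$ depends only on the multiset of calibration scores and on $S_{ij}$, it is equivariant: $G_{\sigma(i)\sigma(j)}(S,M_2)=G_{ij}(S^\sigma,M_2^\sigma)$. Combining the two facts gives
\begin{align*}
P\bigl(\Pi_{kl}^{\mathrm{split}}\le t,\ M_2(k,l)=1\bigr) = P\bigl(\Pi_{\sigma(k)\sigma(l)}^{\mathrm{split}}\le t,\ M_2(\sigma(k),\sigma(l))=1\bigr).
\end{align*}

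\textbf{Step 2 (averaging over pairs).} The group $\Sigma_{\mathcal{N}}$ acts transitively on unordered pairs $\{i,j\}\subseteq\mathcal{N}$ with $i\ne j$. I would treat $S$ and $M_2$ as symmetric, so that a pair and its transpose carry the same information. Let $m=\binom{|\mathcal{N}|}{2}$. Averaging the identity from Step 1 over all such pairs, and using that $M_2(i,j)=0$ off $\mathcal{N}$ almost surely, gives
\begin{align*}
P\bigl(\Pi_{kl}^{\mathrm{split}}\le t,\ M_2(k,l)=1\bigr)=\frac1m E\Bigl[\sum_{(i,j)\in\mathcal{H}_2'}\mathbbm{1}(\Pi_{ij}^{\mathrm{split}}\le t)\Bigr].
\end{align*}
The same averaging gives $P(M_2(k,l)=1)=E|\mathcal{H}_2'|/m$.

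\textbf{Step 3 (deterministic rank bound).} On every realization, $\sum_{(i,j)\in\mathcal{H}_2'}\mathbbm{1}(\Pi_{ij}^{\mathrm{split}}\le t)\le t|\mathcal{H}_2'|$. To see this, order the calibration scores decreasingly. The element in position $r$ has $\Pi^{\mathrm{split}}\ge r/|\mathcal{H}_2'|$, since ties only increase this value. So at most $\lfloor t|\mathcal{H}_2'|\rfloor$ elements satisfy $\Pi^{\mathrm{split}}\le t$. Taking expectations and dividing by $P(M_2(k,l)=1)$ yields $P(\Pi_{kl}^{\mathrm{split}}\le t\mid M_2(k,l)=1)\le t$.

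The coverage claim then follows with $t=\alpha$. Indeed, $Y_{kl}\notin\widehat C_{kl}^{\mathrm{split}}$ exactly when $\Pi^{\mathrm{split}}_{kl}\le\alpha$, because the test score evaluated at the true $Y_{kl}$ is precisely $S_{kl}$. Note that Step 2 handles the random, possibly data-dependent calibration size directly: both numerator and denominator are expectations normalized by the same deterministic constant $m$, so no conditioning on $M_2$ is needed.

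\textbf{Main obstacle.} I expect the main difficulty to be making Step 1 and the transitivity in Step 2 rigorous. First, one must check carefully that the permuted inputs $\mathcal{I}^{\mathrm{split},\sigma}$ and $\mathcal{T}^{\mathrm{split},\sigma}$ really coincide with the inputs computed from $(V^\sigma,M_1^\sigma,M_2^\sigma)$, including the indicator $M_2$ that is fed into the model. Second, the restriction $i<j$ in $\mathcal{H}_2'$ interacts badly with permutations that reverse order, and with non-symmetric masks such as Example \ref{ex:column-selection}. I would first try to handle this by working with unordered pairs and the symmetrized mask $\max(M_2,M_2^T)$. If that fails, I would fall back on the general superuniformity theorem of Section \ref{sec-beyond-exchangeability}, applied to the orbit of $(k,l)$ under $\Sigma_{\mathcal{N}}$.
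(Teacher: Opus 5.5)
Your argument is correct and is essentially the paper's proof. The paper establishes joint $\Sigma_{\mathcal{N}}$-exchangeability of $(S,M_2)$ exactly as in your Step 1 and then invokes Theorem \ref{theorem-super-unform-be}, whose second proof is the same equivariant averaging-plus-counting argument as your Steps 2--3; the only difference is that you handle ties by a direct deterministic count rather than the random tie-breaking of Proposition \ref{prop-lexicographic-order}. Your restriction to symmetric $S$ and $M_2$ is also assumed in the paper's proof, and it cannot be dropped for the $i<j$ calibration set: with $n=3$, $U_i$ i.i.d.\ uniform, $M_2(i,j)=\mathbbm{1}(U_i<U_j)$ and $S_{ij}=1-|U_i-U_j|$, one gets $P(\Pi_{13}^{\mathrm{split}}\le 1/2 \mid M_2(1,3)=1)=2/3$, so your fallback to Theorem \ref{theorem-super-unform-be} cannot rescue the asymmetric case, which instead requires switching to the ordered-pair calibration set as in Proposition \ref{prop-ordered-pair-case}.
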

The proof of this result is again based on new theory for conformal prediction beyond exchangeability; results stated in Section \ref{sec-beyond-exchangeability}
imply finite-sample validity for both full and split conformal prediction.

\subsection{Conformal Prediction Sets that Exploit Similarities Within Rows/Columns} 
\label{subsec:row-column-conformal}
While directly using all sampled nonconformity scores to compute prediction set is natural in many ways, in a certain sense this approach does not take full advantage of the structure of exchangeable arrays. The representation theorems of Aldous and Hoover suggest that elements in a given row or column share a common latent random variable, and therefore may be in some sense closer to each other than to an arbitrary element.  Moreover, for asymmetric arrays, the previously described approach does not exploit potential differences in row and column behavior. We now present an alternative that takes advantage of this additional structure. 

Suppose that the entry $(k,l)$ is pre-specified as the test point.  Let $\mathcal{H}_2(k,\cdot) = \{(k,j) \in \mathcal{H}_2, \ j \in [n]  \}$ and $\mathcal{H}_2(\cdot,l) = \{(j,l) \in \mathcal{H}_2, \ j \in [n]  \}$. Then, it makes sense to consider the following quantities:
\begin{align*}
 R_{kl} &= \frac{1}{|\mathcal{H}_2(k,\cdot)|}\sum_{(k,j) \in \mathcal{H}_2} \mathbbm{1}(\{S_{kj} \geq S_{kl} \}  \cap \left\{(k,l) \in \mathcal{H}_2 \right\}) ,  \\
  C_{kl} &= \frac{1}{|\mathcal{H}_2(\cdot,l)|}\sum_{(j,l) \in \mathcal{H}_2} \mathbbm{1}(\{S_{jl} \geq S_{kl} \}  \cap \left\{ (k,l) \in \mathcal{H}_2 \right\}).
\end{align*}
While these quantities exploit potential similarities of observations in a given column or row, they do not make use of all observations in the sample. Consider the array $\mathcal{U} = (R_{ij}, C_{ij})_{1 \leq i,j \leq n}$. While joint exchangeability of $\mathcal{U}$ implies that the row and column ranks are marginally super-uniform, it does not uniquely specify the joint distribution of these statistics.  We propose an approach based on recent work related to multivariate quantiles \citep{10.1214/20-AOS1996,10.1214/21-AOS2136,10.1214/16-AOS1450} to utilize information from both rows and columns to construct a prediction region.  Some recent work has considered conformal prediction regions \citep{klein2025multivariateconformalpredictionusing} and permutation tests \citep{hlavka2023multivariate} based on multivariate ranks; however, these authors make the stronger exchangeability assumption, whereas we only assume joint exchangeability. 

To define multivariate ranks, let $\mu$ be a reference measure.  Usually conditions are placed on $\mu$ to ensure that a population optimal transport problem has desirable properties, but this will not be crucial in our setting, where only a multivariate sample quantile is required. Let $\hat{F}_{\pm}$ denote an optimal transport map transporting the entries $(\mathcal{U}_{ij})_{i,j \in \mathcal{H}_2}$ to a low-discrepancy sequence for $\mu$ of the same cardinality; that is, 
\begin{align*}
\hat{F}_{\pm} = \argmin_{T: T(x) \sim \hat{\mu}} \int \| x - T(x) \| \ d\hat{\nu}(x),
\end{align*}
where
\begin{align*}
\hat{\nu} = \frac{1}{|\mathcal{H}_2|} \sum_{(i,j) \in \mathcal{H}_2}\mathcal{U}_{ij}, \quad \hat{\mu} = \frac{1}{|\mathcal{H}_2|} \sum_{(i,j) \in \mathcal{H}_2} a_{ij},
\end{align*}
and $(a_{ij})_{(i,j) \in \mathcal{H}_2}$ is a set of low discrepancy points from the reference measure $\mu$\footnote{A low-discrepancy sequence of $n$ points from a measure $\mu$ is a deterministic set of points whose empirical distribution approximates $\mu$ more accurately than a typical random sample of the same size; see \citet{niederreiter1992} 
for a formal treatment. The coverage guarantee of Theorem \ref{theorem-row-column} also holds if $(a_{ij})$ are drawn independently from $\mu$, though this introduces additional randomness into the procedure that is not present with the 
deterministic low-discrepancy construction.}.  Consider a nonconformity score of the form $S(u) = \| \hat{F}_{\pm}(u)\|$.  Define the quantity
\begin{align*}
\check{\Pi}_{kl} = \frac{1}{|\mathcal{H}_2|}\sum_{(i,j) \in \mathcal{H}_2}\mathbbm{1}(S(\mathcal{U}_{ij}) \geq S(\mathcal{U}_{kl})). 
\end{align*}
This calibration set slightly differs from the one in Section \ref{sec:split-conformal}, in that both upper- and lower-diagonal entries are included.  When the array is symmetric, this means double-counting the same nonconformity score, which does not affect the validity of conformal prediction, although one has to deal with the minor complication that the test point also appears twice.  When the array is asymmetric, including both potentially increases the effective sample size, which would reduce the size of the corresponding prediction sets.

In what follows, let
\begin{align*}
\widehat{C}_{kl}^{\mathrm{row-col}} = \{y_{kl}  \ | \ \check{\Pi}_{kl} > \alpha \}.
\end{align*}

\begin{theorem}
\label{theorem-row-column}
Suppose that Assumptions \refeq{assumption-je1-prime} and \refeq{assumption-network-covariates} hold, and either the sampling mechanism is deterministic node splitting or Assumption \ref{assumption-permutation-invariant-split} holds. Then, for any $(k,l)$ and $t \in [0,1]$,  
\begin{align*}
P\left( \check{\Pi}_{kl} \leq t \ \bigr\rvert \ M_{2}(k,l) = 1 \right) \leq t. 
\end{align*}

Consequently, for any $\alpha \in [0,1],$
\begin{align*}
P\left(Y_{kl} \in \widehat{C}_{kl}^{\mathrm{row-col}} \ \bigr\rvert \ M_{2}(k,l) = 1 \right) \geq 1-\alpha.
\end{align*}
\end{theorem}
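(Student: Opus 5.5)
The plan is to reduce Theorem \ref{theorem-row-column} to the general superuniformity result for randomly sampled index sets in Section \ref{sec-beyond-exchangeability} (Theorem \ref{theorem-super-unform-be}), as was done for Theorems \ref{theorem-super-uniform-array} and \ref{theorem-split-conformal}. The group is $\Sigma_{\mathcal{N}}$ acting on ordered pairs by $(i,j)\mapsto(\sigma(i),\sigma(j))$. The random index set is $\mathcal{H}_2$, now including both upper- and lower-diagonal entries, and the scores are $W_{ij}=S(\mathcal{U}_{ij})\mathbbm{1}\{(i,j)\in\mathcal{H}_2\}$.

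The first step is to verify the joint invariance hypothesis
\begin{align*}
\bigl((W_{ij})_{i,j}, M_2\bigr)^{\sigma} \stackrel{d}{=} \bigl((W_{ij})_{i,j}, M_2\bigr), \qquad \sigma \in \Sigma_{\mathcal{N}}.
\end{align*}
Assumption \ref{assumption-je1-prime}, Assumption \ref{assumption-network-covariates} and Proposition \ref{theorem-exchangeability-transformations} give that $(V,\mathcal{Z},M_1,M_2)$ is $\Sigma_{\mathcal{N}}$-invariant in law. It then suffices to show that the map from this array to $(W,M_2)$ is equivariant. The argument proceeds in layers.

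First, the fitted values $\boldsymbol{\hat\mu}^{\mathrm{split}}$ are equivariant. This follows from Assumption \ref{assumption-permutation-invariant-split}. Under deterministic node splitting with $D_2=\mathcal{N}$, it instead holds because any $\sigma\in\Sigma_{\mathcal{N}}$ fixes the training nodes, so the training data $\mathcal{T}^{\mathrm{split}}$ is literally unchanged. Hence the base scores $S_{ij}$ are equivariant.

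Second, the row and column ranks are equivariant. If $R$ and $C$ denote the row and column statistics, then $R^\sigma_{kl}=R_{\sigma(k)\sigma(l)}$, because the row set $\mathcal{H}_2(k,\cdot)$ is carried bijectively to $\mathcal{H}_2(\sigma(k),\cdot)$. The same holds for columns, so $\mathcal{U}$ is jointly equivariant.

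Third, the transport map is label-free. The key point is that $\hat\nu$ is an empirical measure of the multiset $\{\mathcal{U}_{ij}:(i,j)\in\mathcal{H}_2\}$, which is invariant under relabeling. The target $\hat\mu$ depends only on $|\mathcal{H}_2|$, which is also invariant. So $\hat F_{\pm}$ is the same function before and after permutation.

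One caveat is non-uniqueness of the optimal transport map. I will handle it with a permutation-invariant selection rule, for example a lexicographic rule with tie-breaking variables as in Remark \ref{remark-rank-non-unique}. If the $a_{ij}$ are drawn i.i.d. instead, the selection is still invariant in distribution.

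The second step is to check the remaining hypotheses of the general theorem, namely $\check\Pi_{kl}=|\mathcal{H}_2|^{-1}\sum_{(i,j)\in\mathcal{H}_2}\mathbbm{1}(W_{ij}\ge W_{kl})$ on $\{M_2(k,l)=1\}$. The general result conditions on $(k,l)\in\mathcal{H}_2$, and I will use it in its transitive-orbit form. The obstacle is that $\Sigma_{\mathcal{N}}$ does not act transitively on all ordered pairs: orbits separate off-diagonal from diagonal pairs, and pairs with endpoints in $\mathcal{N}$ from pairs with an endpoint outside it. Since $\mathcal{H}_2$ lives on pairs within $\mathcal{N}$, I expect to decompose the calibration set along these orbits. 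The plan is to apply the orbit-wise superuniformity bound conditional on the orbit, then average, using the fact that a count pooled over orbits stays superuniform because $\mathbbm{1}(W_{ij}\ge W_{kl})$ is computed against the full pooled set.

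If the general theorem is not stated orbit-wise, I will fall back on a direct argument. For a uniformly random $\sigma\in\Sigma_{\mathcal{N}}$, invariance gives
\begin{align*}
P\bigl(\check\Pi_{kl}\le t,\, M_2(k,l)=1\bigr)=E\Bigl[\tfrac{1}{|\mathrm{orbit}|}\textstyle\sum_{(i,j)\in \mathrm{orbit}(k,l)}\mathbbm{1}\{(i,j)\in\mathcal{H}_2,\ \check\Pi_{ij}\le t\}\Bigr].
\end{align*}
The deterministic bound $\#\{(i,j)\in\mathcal{H}_2:\check\Pi_{ij}\le t\}\le t|\mathcal{H}_2|$ then controls this. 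This is the main obstacle. Relating $|\mathcal{H}_2\cap\mathrm{orbit}|$ to the normalization of the conditional probability $P(M_2(k,l)=1)$ requires exactly the random-subset argument behind Theorem \ref{theorem-super-unform-be}, so I would cite it rather than redo it.

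The double counting in the symmetric case is handled by the same deterministic counting bound, since that bound holds for any multiset, including one containing the test score twice. The coverage statement then follows by taking $t=\alpha$ and using $\{Y_{kl}\notin\widehat C^{\mathrm{row-col}}_{kl}\}=\{\check\Pi_{kl}\le\alpha\}$.
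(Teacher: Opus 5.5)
Your overall route is the paper's. You establish $\Sigma_{\mathcal{N}}$-invariance of $(V,\mathcal{Z},M_1,M_2)$, push it through the equivariant maps (fitted values, row/column ranks, label-free transport step) to get joint $\Sigma_{\mathcal{N}}$-exchangeability of $(S(\mathcal{U}),M_2)$, and conclude from Theorem~\ref{theorem-super-unform-be}. In two places you are more careful than the paper, which only says the transport loss ``depends only on order statistics.'' First, you justify equivariance of the fitted values under deterministic node splitting, since the training block is fixed by every $\sigma\in\Sigma_{\mathcal{N}}$. Second, you break ties in the transport assignment with auxiliary variables. This is genuinely needed: the rank pairs $\mathcal{U}_{ij}$ are discrete and can coincide, so the optimal ``map'' is really an assignment of indices.

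The step that does not work is your orbit discussion. The claim that a rank pooled over several orbits stays superuniform ``because $\mathbbm{1}(W_{ij}\ge W_{kl})$ is computed against the full pooled set'' is false. The deterministic bound $\#\{(i,j)\in\mathcal{H}_2:\check\Pi_{ij}\le t\}\le t|\mathcal{H}_2|$ controls an average over all of $\mathcal{H}_2$. It transfers to the fixed index $(k,l)$ only if every element of $\mathcal{H}_2$ is interchangeable with $(k,l)$, i.e.\ lies in its orbit $O$. Suppose $\mathcal{H}_2$ also contained entries from a second orbit whose scores always fall below those in $O$. Then $\check\Pi_{kl}$ would equal the within-$O$ rank multiplied by $|\mathcal{H}_2\cap O|/|\mathcal{H}_2|<1$, and superuniformity fails. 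Neither result you point to can close this. The orbit-wise statement (the remark after Proposition~\ref{prop-super-uniform-transitive}) concerns the rank within the orbit. Theorem~\ref{theorem-super-unform-be} assumes transitivity on the whole index set containing $\mathcal{K}$.

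The missing observation is that no decomposition is needed. Pairs touching $[n]\setminus\mathcal{N}$ lie in $\mathcal{H}_2$ with probability zero, by the definition of $\mathcal{N}$. Diagonal entries are excluded from calibration (cf.\ Remark~\ref{remark-diagonal-elements}). So $\mathcal{H}_2\subseteq\widetilde{\mathcal{N}}^2$, on which $\Sigma_{\mathcal{N}}$ acts transitively by Proposition~\ref{prop-ordered-pair-case}; this is exactly how the paper finishes. With the single orbit $O=\widetilde{\mathcal{N}}^2$, your fallback argument is already complete, and the ``main obstacle'' is one line. Invariance of $M_2$ gives $E|\mathcal{H}_2|=|O|\,P(M_2(k,l)=1)$, hence $P(\check\Pi_{kl}\le t,\,M_2(k,l)=1)\le t\,E|\mathcal{H}_2|/|O|=t\,P(M_2(k,l)=1)$. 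This is the paper's second proof of Theorem~\ref{theorem-super-unform-be} specialized to this setting.
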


The proof again makes use of general results beyond exchangeability in Section \ref{sec-beyond-exchangeability}.   While finite-sample validity does not depend on the choice of reference measure $\mu$, width does, and empirically, we have observed that the default choice in the literature, the uniform measure on the unit $\ell_2$ ball,  leads to poor performance.  However, row-column quantiles are all in the first quadrant, 
and thus in our simulation study we instead considered a reference measure that is supported in the first quadrant, the unit ball centered at $(1/2,1/2)$.   Empirical results in Section \ref{sec-ex2} show that this row-column approach to conformal prediction often gives smaller sets than standard split conformal prediction. A  formal comparison of width properties to standard split conformal prediction would require analyzing the joint distribution of optimal transport depth statistics for jointly exchangeable arrays, which is outside the scope of this paper. 

The computational cost of the row-column approach depends on the implementation. For different choices of $y_{kl}$, the optimal transport map $\hat{F}_{\pm}$ needs to be recomputed, leading to increased computational cost.  However, if split conformal prediction is used, only the rank map needs to be recomputed, and the model has to be fit just once.  Moreover, discrete optimal transport between samples of equal sizes is a linear assignment problem, for which efficient polynomial time algorithms exist, and an approximate solution can be computed much faster; see, e.g., \citet{10.5555/2999792.2999868}.  Finally, in standard implementations of full conformal prediction, it is desirable to choose as fine of a grid for $y$ as possible, but this is unnecessary here:  one only needs to consider values of $y$ that lead to jumps in the values of $R_{kl}$ or $C_{kl}$, and there are at most $2n$ such values.  Further, in the practically important case of binary $y_{kl}$, the optimal transport map only needs be computed twice.  Additional implementation details are discussed in Appendix \ref{sec-ex2-appendix}.

\subsection{Selective Conformal Prediction Conditional on Ranks of Network Summary Statistics}
\label{sec-selective-conformal}
In many link prediction problems, certain columns or rows in the matrix may be more likely to contain missing links.  Focusing on these rows or columns can lead to possible increase in the rate of detection of  missing links. In Example \ref{ex:column-selection}, we establish that selecting a column based on the rank of a summary statistic yields a jointly exchangeable splitting mechanism, and therefore Theorem \ref{theorem-split-conformal} applies.   However, in this case,  we can provide a stronger conditional validity guarantee.

Suppose $\mathcal{B} \subseteq \{1, \ldots, n\}$ and $\mathcal{M}_{2,r} = \mathcal{B}$ if $M_{2}(i,r) = 1$ for all $ i \in \mathcal{B}$, and $M_{2}(i,r) = 0 $ for all $ \ i \not\in \mathcal{B}$.  Further, let $\mathcal{R} =r$ if $\mathrm{rank}(\hat{Z}_r, \{\hat{Z}_1, \ldots, \hat{Z}_n\}) =k$ for some pre-specified $1 \leq k \leq n$. For ease of exposition, here we assume that ranks are well-defined; handling ties is discussed in Remark \ref{remark-rank-non-unique}. We have the following result:
\begin{theorem}
\label{thm:column-selective-cp}
Under the same conditions as Theorem \ref{theorem-row-column}, for any $i \in \mathcal{B}$ and any $t \in [0,1]$,
\begin{align*}
P\left(C_{ir} \leq t \ | \ \mathcal{R} =r, \mathcal{M}_{2,r} = \mathcal{B} \right) \leq t.
\end{align*}
\end{theorem}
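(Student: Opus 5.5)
The idea is to reduce the statement to exchangeability of the column-$r$ scores under permutations that fix the conditioning event. First fix the selected column $r$ and the set $\mathcal{B}$. Let $\Sigma_{\mathcal{B},r}$ be the group of permutations $\sigma$ of $[n]$ such that $\sigma(r)=r$, $\sigma$ maps $\mathcal{B}$ to itself, and $\sigma$ fixes every node outside $\mathcal{N}\cup\mathcal{B}$. I will show that, conditionally on the event $E=\{\mathcal{R}=r,\ \mathcal{M}_{2,r}=\mathcal{B}\}$, the vector $(S_{ir})_{i\in\mathcal{B}}$ is exchangeable. Given this, the usual rank argument shows that $C_{ir}=|\mathcal{B}|^{-1}\sum_{j\in\mathcal{B}}\mathbbm{1}(S_{jr}\ge S_{ir})$ is super-uniform for every $i\in\mathcal{B}$. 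On $E$ we have $|\mathcal{H}_2(\cdot,r)|=|\mathcal{B}|$ deterministically. Hence $C_{ir}$ is the normalized "upper rank" of $S_{ir}$ among an exchangeable collection whose size does not depend on the realization, and the standard fact that $P(|\mathcal{B}|^{-1}\#\{j: S_{jr}\ge S_{ir}\}\le t)\le t$ finishes the proof. Alternatively, one can invoke the super-uniformity result of Section \ref{sec-beyond-exchangeability} with the group $\Sigma_{\mathcal{B},r}$ restricted to $E$.

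The key step is invariance of $E$ and of the score array under $\sigma\in\Sigma_{\mathcal{B},r}$. By Assumption \ref{assumption-je1-prime}, which applies because $\Sigma_{\mathcal{B},r}\subseteq\Sigma_{\mathcal{N}}$ once we note that $\mathcal{B}\subseteq\mathcal{N}$ whenever $P(E)>0$, we have $(V^\sigma,M_1^\sigma,M_2^\sigma)\stackrel{d}{=}(V,M_1,M_2)$. Assumption \ref{assumption-network-covariates} gives $\hat{Z}^\sigma=\zeta(\mathcal{X}^\sigma)$, so the nodal statistics are permuted accordingly. The rank of node $\sigma(r)=r$ is therefore unchanged, and $\{\mathcal{R}=r\}$ is preserved by the relabeling. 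Because $\sigma$ fixes $r$ and stabilizes $\mathcal{B}$, the column pattern $M_2(\cdot,r)$ is mapped to $M_2(\sigma(\cdot),r)$, so $\mathcal{M}_{2,r}=\mathcal{B}$ is preserved as well. For the scores there are two cases. Under Assumption \ref{assumption-permutation-invariant-split}, the fitted values transform as $\boldsymbol{\hat{\mu}}^{\mathrm{split},\sigma}$, so $S^\sigma$ is the score array computed from the permuted data. Under deterministic node splitting, the model is fit on $D_1$, which $\sigma$ leaves fixed, so the fitted function is unchanged. Combining these facts, $\big((S_{\sigma(i)r})_{i\in\mathcal{B}},\mathbbm{1}_E\big)\stackrel{d}{=}\big((S_{ir})_{i\in\mathcal{B}},\mathbbm{1}_E\big)$. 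Conditioning on $E$ then shows that the conditional law of $(S_{ir})_{i\in\mathcal{B}}$ is invariant under every permutation of $\mathcal{B}$, since $\Sigma_{\mathcal{B},r}$ realizes all of them.

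I expect the main obstacle to be justifying that permutations of $\mathcal{B}$ are allowed, and checking that the selection event is genuinely invariant rather than only invariant in distribution. For the first point, the row indices $i\in\mathcal{B}$ must lie in $\mathcal{N}$ and must not include $r$. The second requirement holds because $M_2(r,r)$ is a diagonal entry and can be excluded. For the second point, the ranking has to be of $\hat{Z}$ computed from $\mathcal{X}$, which contains $M$, possibly together with $M_2$. If $\hat{Z}$ depends on the split masks, I would add $M_1,M_2$ to $\mathcal{X}$ so that Assumption \ref{assumption-network-covariates} still produces equivariance. Ties will be handled through the tie-breaking construction in Remark \ref{remark-rank-non-unique}, using i.i.d.\ uniforms attached to nodes, which are jointly permuted and keep the rank equivariant.
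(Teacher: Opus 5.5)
Your proposal is correct and follows essentially the same route as the paper. The paper also reduces the claim to exchangeability of $(S_{ir})_{i \in \mathcal{B}}$ conditional on $\{\mathcal{R}=r,\ \mathcal{M}_{2,r}=\mathcal{B}\}$, using relabelings that fix $r$ and preserve $\mathcal{B}$ together with joint exchangeability of scores, summary statistics and masks, differing only in that it writes $\{\mathcal{R}=r\}$ as a disjoint union over orderings of $\hat{Z}_1,\ldots,\hat{Z}_n$ with $r$ in the prescribed position, whereas you note directly that the event is fixed pointwise. One small point, shared with the paper: since $\hat{Z}=\zeta(\mathcal{X})$ depends on $M$, your invariance step actually needs $(V,M,M_1,M_2)$, not just $(V,M_1,M_2)$, to be jointly $\Sigma_{\mathcal{N}}$-invariant, which the paper also takes for granted when it asserts that $(S_{ij},\mathcal{Z}_{ij},M_{ij})$ is jointly exchangeable.
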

The proof of this stronger result requires a different strategy from the one used to prove Theorem \ref{theorem-split-conformal}.  While inference after selection is generally a delicate topic, this result suggests that conformal prediction is valid even if the column of interest is adaptively chosen.  Moreover, in this case a form of mask-conditional validity is achievable, which is not typically the case for jointly exchangeable arrays.  Another instance where mask-conditional validity holds is when the selection rule specifies the sampled nodes, and a subarray involving these nodes is observed \citep{lunde2023validityconformalpredictionnetwork}.      

\section{Beyond Exchangeability}
\label{sec-beyond-exchangeability}
To prove finite-sample validity of conformal prediction for arrays, 
we develop general technical machinery for settings where symmetry 
is present but exchangeability does not hold. It turns out that 
finite-sample validity does not depend on the array structure 
itself, but rather on whether the class of invariances 
is rich enough to map any element to a fixed reference element. From this abstract viewpoint, an ``element'' may be an integer 
index, with the relevant invariance being under permutations or cyclic shifts of the index set, or an unordered pair $\{i,j\}$ of 
node indices, where the relevant invariance is under node relabelings 
that map any given pair to the test pair while preserving the 
joint distribution of the array, as is possible under joint 
exchangeability. We believe these results are of substantial 
independent interest beyond the array setting considered here.

We first present a result where all $n$ observations are used. In this case, our result is closely related to classical randomization inference, where invariance under a group yields finite-sample validity; see \citet{ritzwoller2025randomizationinferencetheoryapplications} for an overview. Our proof strategy for this case adapts an argument of \citet{10.1093/biomet/asaa079}, who consider the special case of cyclic permutations.  We then consider the case where the sampling mechanism itself satisfies an invariance property. For the latter case, we use a novel bijection argument that differs from previous work. We start with some additional notation.
Let $W = (W_1, \ldots, W_n)$ be a vector of real-valued random variables, and for a bijection\footnote{While such bijections are technically permutations of $[n]$, we use the more general term since we also consider restricted classes such as cyclic shifts, for which ``permutation'' would be misleading. } $g:[n] \mapsto [n]$, let
\begin{align*}
g W = (W_{g(1)}, \ldots, W_{g(n)}).
\end{align*}
Suppose that for some collection of functions $\mathcal{G}$, the following distributional equivalence holds:
\begin{align}
\label{eq-distributional-invariance}
gW \stackrel{d}{=} W \ \forall g \in \mathcal{G}.
\end{align}

The following proposition establishes that, if $\mathcal{G}$ contains a subset $\mathcal{G}_0$ consisting of $n$ functions that satisfy a certain property, then superuniformity holds.  Thus, it is possible that superuniformity holds even when $|\mathcal{G}| = n$, which allows a much wider class of invariance structures than exchangeability.  Note that superuniformity immediately implies finite-sample validity for conformal prediction sets when the nonconformity scores satisfy the invariance property.    

\begin{proposition}[Superuniformity Beyond Exchangeability]
\label{prop-super-uniform-transitive}
Let $\mathcal{G}$ satisfy (\refeq{eq-distributional-invariance}), and suppose that for some $i \in \{1, \ldots, n\}$, there exists $\mathcal{G}_0 = \{g_1, \ldots, g_n\} \subseteq \mathcal{G}$ such that for each $j \in [n]$, 
\begin{align}
\label{eq-invariance-condition}
g_j(i) = j.
\end{align}
Then, for any $i \in \{1, \ldots, n\}$ and any $t \in [0,1],$ 
\begin{align*}
P\left( \frac{1}{n} \sum_{k=1}^n \mathbbm{1}(W_k \geq W_i) \leq t \right) \leq t. 
\end{align*}
\end{proposition}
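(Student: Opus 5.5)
The plan is the classical randomization (``averaging over the orbit'') argument. It needs only the $n$ bijections in $\mathcal{G}_0$, not a group structure. Fix the index $i_0$ for which the family $\mathcal{G}_0=\{g_1,\ldots,g_n\}$ with $g_j(i_0)=j$ exists. For a vector $w\in\mathbb{R}^n$ and an index $k$, write
\begin{align*}
\pi_k(w) = \frac{1}{n}\sum_{m=1}^n \mathbbm{1}(w_m \geq w_k).
\end{align*}
The proof has three steps: a pathwise counting bound, a transfer of each $\pi_j$ to $\pi_{i_0}$ via $g_j$, and an average over $j$.

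\emph{Step 1 (deterministic bound).} I would first show that for every $w\in\mathbb{R}^n$ and $t\in[0,1]$,
\begin{align*}
\frac{1}{n}\sum_{k=1}^n \mathbbm{1}\bigl(\pi_k(w)\le t\bigr) \le t.
\end{align*}
Let $K=\{k:\pi_k(w)\le t\}$. If $K$ is empty there is nothing to prove. Otherwise pick $k^*\in K$ minimizing $w_k$ over $K$. Every $k\in K$ satisfies $w_k\ge w_{k^*}$, so $|K|\le \sum_m \mathbbm{1}(w_m\ge w_{k^*}) = n\pi_{k^*}(w)\le nt$. Because the definition uses a non-strict inequality, ties are handled automatically; I expect this to be the only place where care is needed.

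\emph{Step 2 (transferring indices).} For each $j$, since $g_j$ is a bijection of $[n]$ with $g_j(i_0)=j$, reindexing the sum gives the pathwise identity
\begin{align*}
\pi_{i_0}(g_jW)=\frac1n\sum_{m=1}^n \mathbbm{1}\bigl(W_{g_j(m)}\ge W_{g_j(i_0)}\bigr)=\frac1n\sum_{m'=1}^n \mathbbm{1}(W_{m'}\ge W_j)=\pi_j(W).
\end{align*}
By the invariance \eqref{eq-distributional-invariance}, $g_jW\stackrel{d}{=}W$. Since $\pi_{i_0}$ is a measurable function of the vector, it follows that $P(\pi_{i_0}(W)\le t)=P(\pi_j(W)\le t)$ for every $j\in[n]$.

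\emph{Step 3 (averaging).} Call the common value from Step 2 $p_t$. Averaging over $j$ and applying Step 1 pathwise gives
\begin{align*}
p_t=\frac1n\sum_{j=1}^n P\bigl(\pi_j(W)\le t\bigr)=E\Bigl[\frac1n\sum_{j=1}^n \mathbbm{1}\bigl(\pi_j(W)\le t\bigr)\Bigr]\le t.
\end{align*}
Step 2 showed that every index $i$ has the same probability $p_t$ as $i_0$. Hence the bound holds for any $i\in[n]$, not only the reference index $i_0$, which gives the ``for any $i$'' clause of the statement.
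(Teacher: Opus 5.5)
Your proof is correct and follows the paper's argument: the paper also uses the maps $g_j$ to show that $R_1,\dots,R_n$ are identically distributed, and then averages over $j$ against a pathwise counting bound. The only difference is the counting step. The paper breaks ties with auxiliary uniforms (Proposition \ref{prop-lexicographic-order}), so that exactly $\lfloor nt\rfloor$ indices satisfy $\widetilde{R}_j\le t$. Your minimal-element argument instead bounds the count by $nt$ directly with ties present, which removes the need for that randomization.
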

When $\mathcal{G}$ is a group action,  requiring (\refeq{eq-invariance-condition}) is equivalent to requiring that the group action is transitive.  Many group actions that are of interest in statistical applications, including the cyclic group and rotational invariance along the surface of a hypersphere, satisfy this condition. 

Proposition \ref{prop-super-uniform-transitive}  can be proved using an argument analogous to the one in Proposition 1 of \citet{10.1093/biomet/asaa079}, which studied  properties of rank statistics under the cyclic group,  also a transitive group action.  For completeness, a proof of Proposition \ref{prop-super-uniform-transitive} is provided in Appendix \ref{sec:appendix-1}. 

\begin{remark}
If $\mathcal{G}$ is a group action and the action of $\mathcal{G}$ on $\{1,\ldots,n\}$ is not transitive, 
an analogous argument applied within each orbit shows that the rank 
of $W_i$ within its own orbit is superuniform, with a correspondingly smaller reference set. \citet{dobriban2025symmpi} 
study a related randomization test based on orbits of a group action; their construction requires evaluating the statistic at every group-transformed configuration in the orbit, which is not 
generally possible when observations can leave the sample under group transformations (i.e., map to unobserved coordinates), as in the setting of Theorem \ref{theorem-super-unform-be}. A related orbit-decomposition idea underlies e-value-based tests of group invariance \citep{koning2026measuringevidenceexchangeabilitygroup}, which similarly presume the group acts on a fully observed sample and do not address this setting.
\end{remark}

Now, we consider a further generalization of this result to handle the case where the sample itself is a random subset of $\{1, \ldots, n\}$. Let:
\begin{align*}
\mathcal{G}^{\mathcal{N}} = \left\{ g|_\mathcal{N} \ s.t. \ g \in \mathcal{G}, \ g|_\mathcal{N} \text{ is a bijection}  \right\}.
\end{align*}
Further suppose that the following condition holds:
\begin{align}
\label{eq-invariance-L}
g L \stackrel{d}{=} L \quad \forall g \in \mathcal{G}^{\mathcal{N}}. 
\end{align}
Unlike the setting of Proposition \ref{prop-super-uniform-transitive}, the
collection $\mathcal{G}^{\mathcal{N}}$ need not be closed under composition, as
a composition of restrictions need not itself be the restriction of an element
of $\mathcal{G}$.  However, this will turn out to be immaterial since these functions generate a group of invariances.  

\begin{lemma}[Invariances Generate a Group]
\label{lemma-invariance-group}
Suppose that $\mathcal{G}^{\mathcal{N}}$ satisfies (\refeq{eq-invariance-L}),
and let $\langle \mathcal{G}^{\mathcal{N}} \rangle$ denote the collection of all finite compositions
$h = h_1 \circ \cdots \circ h_m$, where for each $\ell$ either
$h_\ell \in \mathcal{G}^{\mathcal{N}}$ or
$h_\ell^{-1} \in \mathcal{G}^{\mathcal{N}}$, together with the identity.  Then
$\langle \mathcal{G}^{\mathcal{N}} \rangle$ is a group of bijections of $\mathcal{N}$ containing
$\mathcal{G}^{\mathcal{N}}$, and $hL \stackrel{d}{=} L$ for every $h \in \langle \mathcal{G}^{\mathcal{N}} \rangle$.
\end{lemma}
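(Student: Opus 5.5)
The plan is to check the group axioms directly, then prove the distributional invariance by induction on the length of a composition. Throughout, for a bijection $h$ of $\mathcal{N}$ we write $hL = (L_{h(1)}, \ldots)$ with coordinates indexed by $\mathcal{N}$. The one identity needed is the action rule
\[
(h_1 \circ h_2) L = h_2(h_1 L).
\]
To verify it, note that the coordinate at position $a$ of $h_2(h_1 L)$ is the coordinate of $h_1L$ at $h_2(a)$, which is $L_{h_1(h_2(a))}$. So the action is a right action, and this affects only the bookkeeping, not the argument.

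\emph{Group structure.} Every element of $\mathcal{G}^{\mathcal{N}}$ is a bijection of $\mathcal{N}$ by definition, so its inverse exists and is also a bijection of $\mathcal{N}$. Finite compositions of bijections of $\mathcal{N}$ are again bijections of $\mathcal{N}$. Hence $\langle \mathcal{G}^{\mathcal{N}} \rangle$ sits inside the symmetric group on $\mathcal{N}$, contains the identity, and contains $\mathcal{G}^{\mathcal{N}}$ (take $m=1$). It is closed under composition, since concatenating two admissible words gives an admissible word. It is closed under inverses because $(h_1\circ\cdots\circ h_m)^{-1} = h_m^{-1}\circ\cdots\circ h_1^{-1}$, and each $h_\ell^{-1}$ again satisfies the defining condition: either $h_\ell^{-1}\in\mathcal{G}^{\mathcal{N}}$, or $(h_\ell^{-1})^{-1}=h_\ell\in\mathcal{G}^{\mathcal{N}}$. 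Associativity is inherited from composition of maps. So $\langle \mathcal{G}^{\mathcal{N}} \rangle$ is a group.

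\emph{Invariance for generators and their inverses.} For $g \in \mathcal{G}^{\mathcal{N}}$, (\ref{eq-invariance-L}) gives $gL \stackrel{d}{=} L$. For the inverse, apply the measurable coordinate-permutation map $x \mapsto g^{-1}x$ to both sides. Equality in distribution is preserved under measurable maps, so $g^{-1}(gL) \stackrel{d}{=} g^{-1}L$. By the action rule, $g^{-1}(gL) = (g\circ g^{-1})L = L$, and therefore $g^{-1}L \stackrel{d}{=} L$.

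\emph{Induction on word length.} Suppose $h = h_1\circ h'$, where $h'$ is a word of length $m-1$ with $h'L \stackrel{d}{=} L$, and $h_1$ is a generator or the inverse of one. Then $hL = h'(h_1L)$. We have $h_1 L \stackrel{d}{=} L$, and $x\mapsto h'x$ is a fixed measurable map, so $h'(h_1L) \stackrel{d}{=} h'L \stackrel{d}{=} L$. The identity case is trivial, which completes the induction.

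The only step needing care is keeping the order of composition consistent with the right-action convention. It is harmless here, because the induction hypothesis is applied to an arbitrary shorter word and peeling off either end works equally well. No real obstacle is expected; the key point is that the invariance is pushed through a \emph{fixed} (nonrandom) relabeling map at each step.
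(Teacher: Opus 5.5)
Your proposal is correct and follows essentially the same route as the paper, which also treats relabeling as a fixed measurable map $\varphi_h$ obeying the right-action rule $\varphi_h \circ \varphi_g = \varphi_{g\circ h}$. It likewise gets invariance under inverses by applying $\varphi_{g^{-1}}$ to $gL \stackrel{d}{=} L$, and invariance of compositions by applying a fixed relabeling and inducting on word length; your explicit inverse-closure check (reversing the word) just spells out what the paper calls immediate from the definition.
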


We now have the following result:

\begin{theorem}[Superuniformity for Sampled Elements Under Distributional Invariance]
\label{theorem-super-unform-be}
Suppose that $\mathcal{G}^{\mathcal{N}}$ satisfies (\refeq{eq-invariance-L}).  Furthermore, suppose that $\langle \mathcal{G}^{\mathcal{N}}\rangle$ is transitive; that is, 
\begin{align}
\label{eq-transitivity-condition}
\text{for every } x, y \in \mathcal{N} \ \text{ there exists } h \in \langle \mathcal{G}^{\mathcal{N}}\rangle
\ \text{ with } h(x) = y ,
\end{align}
Then, for any $t \in [0,1],$ 
\begin{align*}
P\left( \frac{1}{|\mathcal{K}|} \sum_{k \in \mathcal{K}} \mathbbm{1}(W_k \geq W_i) \leq t \ \biggr\rvert \ i \in \mathcal{K}  \right) \leq t. 
\end{align*}

\end{theorem}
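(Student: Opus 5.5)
The plan is to reduce the conditional statement to an unconditional averaging identity over the index set $\mathcal{N}$, using the group $\langle \mathcal{G}^{\mathcal{N}}\rangle$ from Lemma \ref{lemma-invariance-group}. I take $L$ to be the array $\bigl(W_j, \mathbbm{1}(j \in \mathcal{K})\bigr)_{j \in \mathcal{N}}$, which encodes both the scores and the random sample $\mathcal{K} \subseteq \mathcal{N}$, with $i \in \mathcal{N}$.

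For $j \in \mathcal{N}$ define
\begin{align*}
\pi_j = \frac{1}{|\mathcal{K}|}\sum_{k \in \mathcal{K}} \mathbbm{1}(W_k \geq W_j),
\end{align*}
which is well defined on the event $\{j \in \mathcal{K}\}$. Write $\pi_j = \Phi_j(L)$ and $\mathbbm{1}(j\in\mathcal{K}) = \chi_j(L)$ for measurable functionals of $L$.

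\textbf{Step 1 (equivariance).} I will check that for any bijection $h$ of $\mathcal{N}$,
\begin{align*}
\Phi_i(hL) = \Phi_{h(i)}(L), \qquad \chi_i(hL) = \chi_{h(i)}(L).
\end{align*}
This holds because applying $h$ relabels $\mathcal{K}$ to $h^{-1}(\mathcal{K})$ and $W$ to $(W_{h(j)})_j$. The sum defining $\pi$ is over the sampled set, so it is invariant under this relabeling, and only the reference index moves. This is exactly where the sampling indicator must be part of $L$: points that leave the sample are carried along consistently, so no term is lost.

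\textbf{Step 2 (transitivity).} By Lemma \ref{lemma-invariance-group}, $hL \stackrel{d}{=} L$ for every $h \in \langle \mathcal{G}^{\mathcal{N}}\rangle$. By \eqref{eq-transitivity-condition}, for each $j$ there is an $h$ with $h(i)=j$. Together with Step 1 this gives, for every $j \in \mathcal{N}$,
\begin{align*}
P(\pi_i \le t,\ i\in\mathcal{K}) = P(\pi_j \le t,\ j \in \mathcal{K}), \qquad P(i \in \mathcal{K}) = P(j\in\mathcal{K}).
\end{align*}

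\textbf{Step 3 (averaging).} Averaging over $j\in\mathcal{N}$ gives
\begin{align*}
P(\pi_i\le t, i\in\mathcal{K}) = \frac{1}{|\mathcal{N}|}\, E\Bigl[\sum_{j\in\mathcal{K}}\mathbbm{1}(\pi_j\le t)\Bigr], \qquad P(i\in\mathcal{K}) = \frac{E|\mathcal{K}|}{|\mathcal{N}|}.
\end{align*}

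\textbf{Step 4 (deterministic rank bound).} I then prove that for any finite multiset of reals $\{W_k\}_{k\in\mathcal{K}}$,
\begin{align*}
\sum_{j\in\mathcal{K}}\mathbbm{1}(\pi_j\le t)\le t|\mathcal{K}|.
\end{align*}
To see this, let $m$ be the number of $j$ with $\pi_j \le t$. These are the indices whose values are among the largest. If $j^*$ is such an index with the smallest $W_{j^*}$, then at least $m$ values are $\ge W_{j^*}$, so $m/|\mathcal{K}| \le \pi_{j^*}\le t$. This is the usual tie-robust counting argument, as in the proof of Proposition \ref{prop-super-uniform-transitive}. When $\mathcal{K}=\emptyset$ both sides are zero.

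Combining Steps 3 and 4,
\begin{align*}
P(\pi_i\le t, i\in\mathcal{K})\le t\,P(i\in\mathcal{K}),
\end{align*}
and dividing by $P(i\in\mathcal{K})>0$ yields the claim.

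The main obstacle I expect is Step 1. One must make precise how $g|_{\mathcal{N}}$ acts on $L$, including the direction of the relabeling, so that the sampled set transforms covariantly and $\pi$ at index $i$ in the transformed array equals $\pi$ at $h(i)$ in the original array, rather than at $h^{-1}(i)$. Transitivity makes either direction acceptable, but the bookkeeping has to be consistent with the convention $gW=(W_{g(1)},\dots)$. This bookkeeping is the "bijection argument" that replaces recomputing statistics at configurations where observations leave the sample.
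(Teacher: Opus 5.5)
Your proposal is correct and is essentially the paper's second proof of Theorem \ref{theorem-super-unform-be}. Both arguments use equivariance of the pair (sampled rank, membership indicator) under relabeling, then transitivity to equate its joint law across all $j \in \mathcal{N}$, then summation over $\mathcal{N}$ combined with the counting bound $\sum_{j \in \mathcal{K}} \mathbbm{1}(\pi_j \le t) \le t|\mathcal{K}|$. The only difference is in handling ties: the paper first breaks ties with auxiliary uniforms (Proposition \ref{prop-lexicographic-order}) so that the count equals exactly $\lfloor |\mathcal{K}| t \rfloor$, whereas your deterministic Step 4 handles ties directly and avoids that extra randomization.
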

\begin{remark}[Comparison to Previous Conditions]
\label{remark-comparison-conditions}
Condition (\refeq{eq-transitivity-condition}) asks only that the available
invariances be rich enough to carry any index to any other, and is therefore
the same requirement as in the setting without a random sampling mechanism.
An earlier version of this manuscript established
Theorem \ref{theorem-super-unform-be} under a slightly stronger hypothesis, requiring a
subcollection $\mathcal{G}_0^{\mathcal{N}}$
with $g_j(j) = i$ for each $j$ and with $g_j(i)$ distinct.  We show in Appendix \ref{sec:appendix-1} that whenever
(\refeq{eq-transitivity-condition}) holds, such a subcollection can always be
extracted, so that the earlier argument applies unchanged under the weaker
hypothesis.
\end{remark}
 
We give two proofs.  The first proceeds by constructing an explicit
measure-preserving bijection between events, together with a lemma showing that
a subcollection satisfying an auxiliary injectivity condition can always be
extracted.  This generalizes an argument posted earlier to arXiv.  The second is more direct, and works with an equivariant functional
of the sampled rank and the membership indicator.  The second proof does not require the
distinct image condition, while the first does not require the sampled
rank to be equivariant, so it may generalize to other settings not considered here. 


In the case of jointly exchangeable arrays, one can represent the invariances as functions acting on off-diagonal ordered pairs or sets of cardinality two, depending on whether the array is directed or undirected.  Both classes are transitive, and therefore, the condition above is satisfied.   



A notable prior work  \citet{10.1214/23-AOS2276} also explores validity of conformal prediction beyond the exchangeable setting.  Our result is complementary:  they consider an underlying data vector $Z = (Z_1, \dots, Z_{n+1})$ and pairwise swaps of the test point, letting $Z^i$ be a vector in which the $i$-th and $(n+1)$th element are swapped.  For a weighted conformal prediction procedure, they derive an upper bound on the coverage gap of the form:
\begin{align}
\label{eq-coverage-gap}
1-\alpha - P( Y_{n+1} \in \widehat{C}_{n+1}) \leq  \sum_{i=1}
^n w_i d_{TV}(Z^{i}, Z).
\end{align}
If the weights are chosen appropriately, the coverage gap may be small.  However, pairwise swaps may not preserve the distribution well for certain invariance structures; they are one possible choice for the collection $\mathcal{G}_0$ introduced in Proposition \ref{prop-super-uniform-transitive}
 when $\mathcal{G}$ is the full permutation group.  For other invariance structures, one may want to consider different functions acting on the indices of $Z$. As a simple example, in the case of the cyclic group, one may consider $Z^{i} = (Z_{\tau(1)}, \ldots, Z_{\tau(n+1)})$, where $\tau$ shifts all observations by $n+1- [i \text{ mod } (n+1)]$, so that $\tau(i) = n+1$.  Our result implies that the coverage gap is $0$ when invariance under the cyclic group holds. 

\section{Array Conformal Prediction for Missing Elements}
\label{sec-weighted-conformal}
In the previous section, we considered an exchangeable sampling mechanism, and focused on constructing a prediction set for one of the sampled observations.  However, in certain settings, we may be interested in constructing a prediction set for a missing point based only on observed data.  In this case, it turns out that a jointly exchangeable missingness mechanism imposes a lot of structure and makes this a tractable problem, at least asymptotically.   Constructing a prediction set for a missing point is inherently a more difficult 
problem than constructing one for a sampled point, with the test point drawn from a potentially different distribution 
than the calibration points, requiring additional 
assumptions. Consequently, our guarantees 
in this section will be asymptotic rather than 
finite-sample, and additional regularity conditions 
on the missingness mechanism and graphon estimator 
will be needed, though these conditions are mild 
in practice.

For this problem, we restrict our attention to a model inspired by representation theorems for jointly exchangeable arrays. 
\begin{assumption}[Dyadic Regression Model with Graphon Missingness]
\label{assumption-regression-missingness}
Suppose that $(X_1, \xi_1, \zeta_1), \ldots, (X_{n}, \xi_{n}, \zeta_{n}), \stackrel{i.i.d.}{\sim} P$ and let $(\eta_{ij})_{1 \leq i < j \leq n}$ and $(\nu_{ij})_{1 \leq i < j \leq n}$ be collections of i.i.d. random variables that are independent of all other random variables.  Suppose that $Y_{ij} = Y_{ji}$ for all $i \neq j$, and for appropriately measurable $g$, $w$, and $i <j$,
\begin{align*}
Y_{ij} = g(X_i,\zeta_i, X_j, \zeta_j, \nu_{ij}), \quad M_{ij} = \mathbbm{1}( \eta_{ij} \leq  \rho_nw(\xi_i, \xi_j) \wedge 1),
\end{align*}
where $\int_0^1 \int_0^1 w(u,v) du dv =1$, $0 <c < w(x,y) < C < \infty$ and $\rho_n \rightarrow 0$.
\end{assumption}
We focus on the more interesting sparse case, where the sparsity parameter $\rho_n \rightarrow 0$. We also assume a lower and upper-bounded graphon, which is a standard assumption in the network literature. 

The definition of $Y_{ij}$ allows heteroscedasticity; for example, one may consider $E(\gamma_{ij}) = 0$, $E(\gamma_{ij}^2) = 1$  and choose $g$ so that
\begin{align*}
Y_{ij} = f(X_i, \zeta_i, X_j, \zeta_j) + \sigma(X_i, X_j) \nu_{ij}.
\end{align*}
There can also be dependence between the latent positions associated with $Y$ and $M$. Edge covariates are also allowed, so long as they are functions of node covariates. A mild but crucial assumption here is that once node-level information is taken into account, any additional randomness is independent of everything else.  

We now consider a setup in which the data analyst trains a model on entries satisfying $M_{ij} = 1$ and then constructs a prediction set for a missing test point (i.e. $M_{ij} = 0$). We study split conformal prediction with deterministic node splitting, which seems to require the fewest assumptions for unconditional validity. Throughout this section, we assume $n$ is even for notational convenience.  Suppose, for concreteness, that the nodes are split into $D_1 = \{1, \ldots, N \}$ and $D_2 = \{ N+1, \ldots, n \}$,  Further, let $\mathcal{H}_1 = \{\{i,j\} \ | \ M_{ij} =1, i,j \in D_1  \}$ denote the training set, $\mathcal{H}_2 = \{\{i,j\} \ | \ M_{ij} = 1, i,j \in D_2 \}$ denote the calibration set, and $\mathcal{K}_{kl} = \mathcal{H}_2 \cup \{k,l\}$.

For our unconditional validity results, we consider a model of the form $\tilde{Y} = \hat{h}(X_i,X_j)$, where $\hat{h}$ is trained on the training data.  We refer to this case as the ``without network covariates'' setting, which appears to require the fewest conditions.  For conditional validity, we will also consider the ``with network covariates" case, where $\hat{Y}_{ij} = \hat{h}(X_i, X_j, \mathcal{Z}_{ij})$, and the model is fit on the elements in $\mathcal{H}_1$. This setup allows for a wide range of network summary statistics, including values imputed through matrix completion.  In link prediction problems, when $M_{ij} = 0$, it is natural to  consider matrix completion in which the corresponding entry of the response matrix is set to $0$.

We consider two different cases for the response value $Y$: binary, corresponding to typical link prediction problems, and continuous, appropriate for problems involving weighted networks.  For both cases, suppose that some model outputs a prediction $\hat{Y}$ for $Y$.  It is then natural to ask for (asymptotic) conditional coverage given $\hat{Y}$; such a conditional guarantee would even allow the choice of the test point to depend on $\hat{Y}$. In modern regression problems involving many covariates, asymptotic conditional validity given $\hat{Y}$ is often far more attainable than conditional coverage given covariates, due to the curse of dimensionality. 

While the practically relevant guarantee involves conditioning 
on the fitted value $\hat{Y}_{ij}$, we establish conditional 
validity with respect to a population-level quantity 
$\tilde{Y}_{ij} = h(X_i,\xi_i,\zeta_i,X_j,\zeta_j,\xi_j)$ 
that $\hat{Y}_{ij}$ approximates, in a manner made precise in 
our theorem statements. Under consistency of $\hat{Y}_{ij}$, 
this implies an asymptotically valid guarantee for test points 
chosen based on $\hat{Y}_{ij}$; see Proposition 
\ref{theorem-asymptotic-conditional-validity-graphon}.

For binary $Y$, we consider a special case of the nonconformity score proposed by \citet{NEURIPS2020_244edd7e}. 
Let $\hat{\pi}_y(\tilde{y} )$ be an estimate of the quantity $\pi_y(\tilde{y}) = P(Y_{ij}=y \ | \ \tilde{y}, M_{ij}=0)$. Let $\pi = (\pi_0, \pi_1)$ and $\hat{\pi} = (\hat{\pi}_0, \hat{\pi}_1)$.  For simplicity, assume $\hat{\pi}_0(u) + \hat{\pi}_1(u) =1$ for all $u \in \mathbb{R}$. Define the nonconformity score by 
\begin{align*}
\tilde{s}(y, \hat{y};\hat{\pi}) = \begin{cases}
0 &  \hat{\pi}_y(\hat{y}) \geq 0.5 \\
1-\hat{\pi}_y(\hat{y}) & \hat{\pi}_y(\hat{y}) < 0.5. 
\end{cases}
\end{align*}
To rigorously establish asymptotic conditional validity in the binary case, we will consider the following modified nonconformity score:
\begin{align}
\label{eq-score-binary}
s(y, \hat{y};\hat{\pi})  =  \tilde{s}(y, \hat{y};\hat{\pi}) \vee (1-\alpha).
\end{align}
For continuous $Y$, we consider a nonconformity score based on an estimate of the conditional CDF studied by \citet{Chernozhukove2107794118}. For notational simplicity, let $\tilde{F}(y_{ij}  \ | \  \tilde{y}_{ij})$ denote the conditional CDF of $Y_{ij}$ given $\tilde{Y}_{ij}$ and $M_{ij} = 0$.  Let $\tilde{F}_n$ be an estimate of $\tilde{F}$.  Define the nonconformity score as
\begin{align}
\label{eq-score-continuous}
s(y,\hat{y},\tilde{F}_{n}) = \left| \frac{1}{2} - \tilde{F}_{n}(y \ | \ \hat{y})\right|.
\end{align}
Since our goal is to construct prediction intervals for points that are different from those in the calibration set, we consider a weighted conformal prediction procedure \citep{NEURIPS2019_8fb21ee7} that places a higher weight on observations in the calibration set that are more similar to the test point.  In our setting, graphon estimation (see, for example, \citep{d6cb28bb-5a7b-386d-8ced-b68d43954df3,gao2015rate,klopp-oracle-inequalities,repec:oup:biomet:v:104:y:2017:i:4:p:771-783.}) may be used to estimate the weights.  Let $\hat{P} = (\hat{p}_{ij})_{1 \leq i,j \leq n}$ be an estimator of the matrix $P = (\rho_n w(\xi_i,\xi_j))_{1 \leq i, j \leq n}$.

For any $k,l \in D_2$ such that $M_{kl} = 0$, define the quantity: 
\begin{align*}
\Pi_{kl}^{\mathrm{weighted}} =\frac{1}{{N \choose 2}(1-\hat{\rho}_n)} \ \sum_{\{i,j\} \in \mathcal{K}_{kl}} \frac{1-\hat{p}_{ij}}{\hat{p}_{ij}} \mathbbm{1}(S_{ij} \geq S_{kl} ),
\end{align*}
where $\hat{\rho}_n = \frac{1}{{N \choose 2}} \sum_{N+1 \leq i<j \leq n} M_{ij}$.   The prediction set for $\{k,l\}$ is given by:
\begin{align*}
\widehat{C}_{kl}^{\mathrm{weighted}} = \left\{ y_{kl}  \ | \   \Pi_{kl}^{\mathrm{weighted}} > \alpha \right\}. 
\end{align*}
As discussed in Section \ref{sec:related-work}, this procedure is related to \citet{gui2023conformalized}, 
but differs in key ways: we consider a jointly 
exchangeable rather than a deterministic matrix, 
establish validity for any individual missing entry 
rather than in expectation, and handle unknown 
missingness probabilities through graphon estimation. Finally, we also consider conditional validity, which allows guarantees for test points that are adaptively chosen based on fitted values of a model.      

The following theorems establish conditions under which asymptotic unconditional and conditional validity holds.  Our unconditional validity result below holds for any choice of nonconformity score. 

\begin{theorem}[Asymptotic Validity of Graphon-Weighted Conformal Prediction]
\label{theorem-graphon-missigness-unconditional}
Suppose that Assumption \ref{assumption-regression-missingness} holds and that $\hat{Y}_{ij} = \hat{h}(X_i,X_j)$.  Moreover, 
\begin{enumerate}
\item[(a)] (Lower-bounded Graphon and Graphon Estimator)  $w(x,y) \geq c > 0$ and $\hat{p}_{ij}/\rho_n \geq c > 0$ for some $c > 0$;  
\item[(b)] (Sparsity level) $\rho_n = \omega(\sqrt{\log n}/n)$; 
\item[(c)] (Graphon Estimation)
\begin{align*}
\frac{1}{n^2 \rho_n^3} E[\|\hat{P} - P \|_F^2] = o(1).
\end{align*} 

Then, for any $k,l \in D_2$ satisfying $k<l$,
\begin{align*}
\liminf_{n \rightarrow \infty} P(Y_{kl} \in \widehat{C}_{kl}^\mathrm{weighted} \ | \ M_{kl} = 0 ) \geq 1-\alpha.
\end{align*}
\end{enumerate}
\end{theorem}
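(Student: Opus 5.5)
The plan is to compare $\Pi_{kl}^{\mathrm{weighted}}$ with an oracle weighted quantity that uses the true probabilities $p_{ij}=\rho_n w(\xi_i,\xi_j)$. For this oracle we can compute the conditional coverage exactly, given the latent variables and the training data. Then we show that replacing $p_{ij}$ by $\hat p_{ij}$ changes the statistic by $o_P(1)$.

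Step 1 (conditioning). Condition on $\mathcal{F}$, the $\sigma$-field generated by the training nodes $D_1$ (hence $\hat h$) and by the node variables $(X_i,\xi_i,\zeta_i)_{i\in D_2}$. Under Assumption~\ref{assumption-regression-missingness}, the pairs $\{i,j\}\subseteq D_2$ are then conditionally independent across pairs. Each has score $S_{ij}=s(Y_{ij},\hat h(X_i,X_j))$, whose conditional law depends only on $\nu_{ij}$, and an independent mask $M_{ij}\sim\mathrm{Bernoulli}(p_{ij})$, independent because $\eta_{ij}$ is independent of $\nu_{ij}$. The key consequence is that, conditionally on $\mathcal{F}$, $S_{ij}$ is independent of $M_{ij}$. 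Hence the law of $S_{kl}$ given $M_{kl}=0$ equals its law given $\mathcal{F}$ alone, and no MNAR bias remains once we condition on the latent variables.

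Step 2 (oracle statistic). Define
\[
\Pi^{*}_{kl}=\frac{1}{\binom N2}\sum_{\{i,j\}\subseteq D_2}\mathbbm{1}(S_{ij}\ge S_{kl})\cdot\frac{M_{ij}(1-p_{ij})}{p_{ij}(1-\rho_n)},
\]
with the normalisation taken with respect to the average of $1-p_{ij}$. This is an inverse-probability-weighted empirical survival function of the scores of the missing entries. Given $\mathcal{F}$ and $S_{kl}=s$, each summand has conditional mean proportional to $(1-p_{ij})P(S_{ij}\ge s\mid\mathcal{F})$, so $E[\Pi^*_{kl}\mid \mathcal{F},S_{kl}]$ is the $(1-p)$-weighted average of $\bar F_{ij}(S_{kl})=P(S_{ij}\ge S_{kl}\mid\mathcal{F})$ over pairs. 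The test point has the same distribution as a uniformly chosen missing pair, up to an $O(1/N)$ exchange of $\{k,l\}$ against a random pair in $D_2$, which joint exchangeability of $D_2$ permits. A probability-integral-transform argument, as in weighted conformal prediction \citep{NEURIPS2019_8fb21ee7}, then gives $P(\text{weighted survival}(S_{kl})\le\alpha\mid M_{kl}=0)\le\alpha+o(1)$. For the fluctuation of the sum, the summands are conditionally independent and each is bounded by $1/(c\rho_n)$, so the conditional variance is $O(1/(N^2\rho_n))$. This is $o(1)$ when $n^2\rho_n\to\infty$, which condition (b) guarantees. A uniform-in-$s$ version needs a DKW or bracketing argument over the monotone indicators, and the $\sqrt{\log n}$ factor in (b) will absorb the union or chaining cost. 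Finally, $\hat\rho_n/\rho_n\to1$ follows by concentration of the average of the $M_{ij}$ together with a law of large numbers for the U-statistic $\binom N2^{-1}\sum w(\xi_i,\xi_j)\to1$.

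Step 3 (plug-in error). Write $|\Pi^{\mathrm{weighted}}_{kl}-\Pi^*_{kl}|\lesssim \binom N2^{-1}\sum_{\{i,j\}} M_{ij}\,|1/\hat p_{ij}-1/p_{ij}|$. Condition (a) gives $|1/\hat p-1/p|\le |\hat p-p|/(c^2\rho_n^2)$. Apply Cauchy--Schwarz together with $\sum M_{ij}\approx n^2\rho_n$:
\[
E|\Pi^{\mathrm{weighted}}_{kl}-\Pi^*_{kl}|\lesssim \frac{1}{n^2\rho_n^2}\bigl(n^2\rho_n\bigr)^{1/2}E[\|\hat P-P\|_F^2]^{1/2}=\Bigl(\frac{E\|\hat P-P\|_F^2}{n^2\rho_n^3}\Bigr)^{1/2}=o(1)
\]
by (c). Two further corrections must also be shown to be $o_P(1)$: the contribution of the test pair itself, which is at most $1/(N^2 c\rho_n)$, and the ratio $(1-\hat\rho_n)/(1-\rho_n)$. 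The test pair's term could also be handled by conditioning on $M_{kl}=0$, which changes only one summand. Because the perturbation is $o_P(1)$, we get $P(\Pi^{\mathrm{weighted}}_{kl}\le\alpha\mid M_{kl}=0)\le P(\Pi^*_{kl}\le\alpha+\epsilon\mid M_{kl}=0)+o(1)\le\alpha+\epsilon+o(1)$. Letting $\epsilon\downarrow0$ yields the liminf statement.

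Main obstacle. The delicate step is Step 2: obtaining a superuniformity bound for the oracle statistic evaluated at the data-dependent point $S_{kl}$, uniformly enough to allow the $\epsilon$-perturbation. Ties or atoms in the score law can break the continuity argument. I would first treat atoms by working with the right-continuous weighted survival function, where the inequality $P(\bar F(S)\le t)\le t$ still holds, and then use Lipschitz-type slack in $t$ in place of continuity.
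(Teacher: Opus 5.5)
Your proof is correct. Your Step~3 is exactly term $B$ in the paper's Lemma~\ref{lemma-unconditional-weighted}: the same Cauchy--Schwarz bound feeding condition (c). The superuniformity step, however, takes a genuinely different route.

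\textbf{The paper's route.} For this theorem the paper never passes to a population-level survival function. It introduces a coupled mask $M'$ with $\eta'_{kl}=\eta_{kl}$ and fresh independent $\eta'_{ij}$ elsewhere. Its ideal statistic is the \emph{unweighted} conformal rank $\tilde G(S_{kl})$ of the test score among the coupled-missing pairs $\mathcal{K}'_{kl}$ within $D_2$. Since $(Y_{ij},\boldsymbol{\hat\mu}_{ij},(1-M'_{ij})\mathbbm{1}(i,j\in D_2))$ is $\Sigma_{D_2}$-exchangeable, the random-subset result (Theorem~\ref{theorem-super-unform-be}, via the split-conformal argument) gives exact validity of $\tilde G$ given $M_{kl}=0$, with ties handled by the lexicographic device. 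What remains is $|\hat G(S_{kl})-\tilde G(S_{kl})|=o_P(1)$. This comes from two Hoeffding terms, which center both the IPW sum over observed pairs and the count over $M'$-missing pairs at the realized quantity $\sum(1-p_{ij})\mathbbm{1}(S_{ij}\ge S_{kl})$, plus two normalization ratios.

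\textbf{Your route.} You also integrate out the pair noise and take as oracle the $(1-p)$-weighted mixture of conditional survival functions given $\mathcal F$. This is the classical covariate-shift argument, with likelihood ratio $(1-p_{ij})/p_{ij}$ known once the latent variables are fixed.

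\textbf{What each buys.} Your route avoids the coupling and the second empirical comparison. It needs a single Chebyshev step, which requires only $N^2\rho_n\to\infty$. No DKW or chaining is needed, because given $\mathcal F$ the score $S_{kl}$ is independent of the other summands. The paper's route instead reuses the machinery of Section~\ref{sec-beyond-exchangeability}, and it builds the coupled structure on which its conditional-validity proofs rest.

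\textbf{Making the exchange step exact.} Your ``$O(1/N)$ exchange'' can be replaced by an identity. Set
\[
\bar G^{\circ}(s)=\frac{\sum_{\{i,j\}\subseteq D_2}(1-p_{ij})\,P(S_{ij}\ge s\mid\mathcal F)}{\sum_{\{i,j\}\subseteq D_2}(1-p_{ij})},
\]
summing over all pairs including $\{k,l\}$, so that $\bar G^{\circ}$ is symmetric in the $D_2$ labels. Exchangeability, $S_{ij}\perp M_{ij}\mid\mathcal F$, and the probability integral transform for the mixture then give
\[
P\bigl(\bar G^{\circ}(S_{kl})\le t,\ M_{kl}=0\bigr)=\frac{1}{\binom{N}{2}}\,E\Bigl[\sum_{\{i,j\}\subseteq D_2}(1-p_{ij})\,P\bigl(\bar G^{\circ}(S_{ij})\le t\mid\mathcal F\bigr)\Bigr]\le t\,P(M_{kl}=0).
\]
So the oracle is exactly superuniform. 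It differs from the conditional mean of your $\Pi^*_{kl}$ only by a normalization factor tending to one and the single $\{k,l\}$ summand.

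\textbf{Atoms.} Atoms are harmless precisely because your Step~2 uses the left-continuous $P(S\ge s\mid\mathcal F)$. The ``right-continuous'' version $P(S>s\mid\mathcal F)$ mentioned at the end of your proposal would violate $P(\bar F(S)\le t)\le t$ at atoms.
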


\begin{remark}
The lower bound requirement $\hat{p}_{ij}/\rho_n \ge c$ is a condition imposed strictly on the estimator.   Nonparametric estimators can output vanishing or negative edge probability estimates, and replacing them with $\max(\hat{p}_{ij}, c  \hat{\rho}_n)$, where $\hat{\rho}_n$ is the standard empirical edge density, solves this problem in practice and can be shown to have a negligible impact on the estimation rate under our assumed conditions. 
\end{remark}

We now state an asymptotic conditional validity result, which requires additional assumptions on consistency.  To the best of our knowledge, there are no conditional validity results for weighted conformal prediction in the literature.  In the continuous case, our proofs utilize recently developed empirical process results for exchangeable arrays due to \citet{10.1214/20-AOS1981}.  Our result is also novel in the sense that conditional validity is attained even when the test point is out of sample.     
\begin{theorem}[Asymptotic Conditional Validity of Graphon-Weighted Conformal Prediction] \label{theorem-asymptotic-conditional-validity-graphon}
Suppose that Assumption \ref{assumption-regression-missingness} and conditions $(a) - (c)$ of Theorem \ref{theorem-graphon-missigness-unconditional} hold.  Suppose that there exists a collection of random variables $(\tilde{Y}_{kl})_{1 \leq i,j \leq n}$ such that $|\hat{Y}_{kl} - \tilde{Y}_{kl}| = o_P(1)$ and $\tilde{Y}_{kl} = h(X_i, \xi_i, \zeta_i, X_j, \xi_j, \zeta_j)$ for some measurable $h$.  Suppose one of the four conditions below holds, as appropriate for the setting at hand:    
\begin{enumerate}
 \item[(d1)] Discrete response, without network covariates: 
 \begin{align*}
   E|\hat{\pi}_1(\hat{Y}_{kl}) - \pi_1(\widetilde{Y}_{kl})| = o(1) ;  
\end{align*}
\item[(d2)] Discrete response, with network covariates: 
 \begin{align*}
   E|\hat{\pi}_1(\hat{Y}_{kl}) - \pi_1(\widetilde{Y}_{kl})| = o(\rho_n); 
\end{align*}
 \item[(e1)]  Continuous response, without network covariates:
\begin{align*}
  \left|\tilde{F}_{n}(Y_{kl}\ | \ \hat{Y}_{kl}) - \tilde{F}(Y_{kl} \ | \  \widetilde{Y}_{kl})\right| = o(1) ;  
 \end{align*}
\item[(e2)]  Continuous response, with network covariates: 
\begin{align*}
  \left|\tilde{F}_{n}(Y_{kl}\ | \ \hat{Y}_{kl}) - \tilde{F}(Y_{kl} \ | \  \widetilde{Y}_{kl})\right| = o(\rho_n).   
 \end{align*}
 \end{enumerate}
    
    Then, if the nonconformity scores (\refeq{eq-score-binary}) or (\refeq{eq-score-continuous}) are used in the discrete or continuous cases, respectively, for any $\delta > 0$,
    \begin{align*}
     P(Y_{kl} \in \widehat{C}_{kl}^\mathrm{weighted} \ | \  \widetilde{Y}_{kl}, M_{kl} = 0 ) \geq  1-\alpha - R_n,   
    \end{align*}
     where $R_n \geq 0$ and $P(R_n > \epsilon) \rightarrow 0$ for any $\epsilon >0$.
\end{theorem}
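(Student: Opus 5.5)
Conditional on $\widetilde{Y}_{kl}$ and $M_{kl}=0$, I would compare the weighted statistic with an oracle that uses the true $P$ and the population score, and then show that the oracle has approximately the correct conditional coverage. The oracle statistic is
\begin{align*}
\Pi^{\ast}_{kl} = \frac{1}{{N\choose 2}(1-\rho_n)}\sum_{\{i,j\}\in\mathcal{K}_{kl}} \frac{1-p_{ij}}{p_{ij}}\mathbbm{1}(S^\ast_{ij}\geq S^\ast_{kl}),
\end{align*}
where $S^\ast_{ij}=s(Y_{ij},\widetilde{Y}_{ij};\pi)$ in the discrete case and $S^\ast_{ij}=|1/2-\tilde F(Y_{ij}\mid\widetilde Y_{ij})|$ in the continuous case. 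The key identity comes from Assumption \ref{assumption-regression-missingness}. Given node-level variables, $M_{ij}$ depends only on $\eta_{ij}$ and $Y_{ij}$ only on $\nu_{ij}$, so
\begin{align*}
E\Bigl[\tfrac{1-p_{ij}}{p_{ij}}M_{ij}\,\phi(Y_{ij},\widetilde Y_{ij})\Bigr]=E\bigl[(1-p_{ij})\phi(Y_{ij},\widetilde Y_{ij})\bigr]=E\bigl[\phi(Y_{ij},\widetilde Y_{ij})\mathbbm{1}(M_{ij}=0)\bigr].
\end{align*}
Hence the weighted empirical measure of calibration scores estimates the law of the score at a missing pair, which is the law of $S^\ast_{kl}$ given $M_{kl}=0$.

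\textbf{Step 1: concentration of the weighted empirical CDF.} Let
\begin{align*}
\hat G(s)=\frac{1}{{N\choose2}(1-\rho_n)}\sum_{\{i,j\}\in\mathcal H_2}\frac{1-p_{ij}}{p_{ij}}\mathbbm{1}(S^\ast_{ij}\ge s).
\end{align*}
I would show $\sup_s|\hat G(s)-G(s)|=o_P(1)$, where $G(s)=P(S^\ast_{kl}\geq s\mid M_{kl}=0)$.

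In the discrete case the score takes finitely many values once $\pi$ is fixed. Indeed, the truncation at $1-\alpha$ in (\ref{eq-score-binary}) makes the score equal $1-\alpha$ unless $\pi_y<\alpha$, so only a fixed event class matters and a variance bound suffices. Conditional on the latent node variables, the summands are independent, with variance of order $n^2\rho_n^{-1}$ relative to a normalization of order $n^2$. This gives an $O_P((n^2\rho_n)^{-1/2})$ error, plus a U-statistic (Hoeffding projection) error of order $n^{-1/2}$ from the node variables. Condition (b) makes both vanish.

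In the continuous case the indicator class in $s$ is a VC class, and I would invoke the empirical process results for exchangeable arrays of \citet{10.1214/20-AOS1981}, applied after conditioning on the latent variables and the $\eta$'s, to get uniformity in $s$. The conditional law of $S^\ast_{kl}$ given $\widetilde Y_{kl}$ and $M_{kl}=0$ is exactly Uniform$[0,1/2]$ by the probability integral transform (and approximately pivotal in the discrete case by construction of $\pi$). Therefore $G(S^\ast_{kl})$ is conditionally superuniform given $\widetilde Y_{kl}$. This pivotality is where the choice of score enters, and it is what converts marginal validity into conditional validity.

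\textbf{Step 2: replacing the oracle with the estimated quantities.} First, replace $p_{ij}$ by $\hat p_{ij}$ and $\rho_n$ by $\hat\rho_n$. By (a),
\begin{align*}
\Bigl|\tfrac{1}{\hat p_{ij}}-\tfrac{1}{p_{ij}}\Bigr|\le \frac{|\hat p_{ij}-p_{ij}|}{c^2\rho_n^2}.
\end{align*}
Cauchy--Schwarz over the roughly $n^2\rho_n$ sampled pairs then bounds the total weight perturbation by
\begin{align*}
\frac{1}{n^2}\cdot\frac{\sqrt{n^2\rho_n}\,\|\hat P-P\|_F}{\rho_n^2},
\end{align*}
whose expectation is at most $\bigl(E\|\hat P-P\|_F^2/(n^2\rho_n^3)\bigr)^{1/2}=o(1)$ by (c). The term $\hat\rho_n/\rho_n\to1$ is routine. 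The test point's own weight is $O(1/(n^2\rho_n))=o(1)$.

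Second, replace $S^\ast$ by the feasible scores $S$. Under (d1)/(e1) the model uses no network covariates, so $\hat h$ depends only on training nodes $D_1$ and is independent of calibration randomness given $D_1$. The score errors on calibration pairs then average to $o_P(1)$ in weighted $L^1$ by exchangeability of pairs in $D_2$, and the test-point error is $o_P(1)$ by assumption.

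With network covariates, $\mathcal Z_{ij}$ depends on $M$. I expect the missing-pair test distribution to differ from the weighted calibration distribution in a way that amplifies errors by the weights $1/p_{ij}\asymp\rho_n^{-1}$. This is why (d2)/(e2) require $o(\rho_n)$, and I would absorb it via Markov's inequality with that scaling.

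To pass from perturbed scores to perturbed ranks, I need a margin or anti-concentration condition. Continuity of the uniform law gives this in the continuous case. In the discrete case the score is piecewise constant away from $\pi_y=1/2$, and the truncation at $1-\alpha$ removes the problematic atoms. I expect this step, making the rank perturbation uniform near the threshold, to be the main obstacle.

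\textbf{Step 3: conclusion.} Combining the steps, $\Pi^{\mathrm{weighted}}_{kl}=G(S^\ast_{kl})+\Delta_n$ with $\Delta_n=o_P(1)$. Then
\begin{align*}
P(\Pi^{\mathrm{weighted}}_{kl}\le\alpha\mid\widetilde Y_{kl},M_{kl}=0)\le \alpha+P(|\Delta_n|>\delta\mid\widetilde Y_{kl},M_{kl}=0)+\delta\cdot\mathrm{Lip},
\end{align*}
where $\mathrm{Lip}$ is the Lipschitz constant of the conditional law of $G(S^\ast_{kl})$. Set $R_n$ equal to the conditional probability term plus $\delta$-slack, and let $\delta\downarrow0$ along a sequence. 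Since $E[P(|\Delta_n|>\delta\mid\cdot)]\to0$, Markov gives $P(R_n>\epsilon)\to0$, which is the stated conditional guarantee.
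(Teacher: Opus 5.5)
\textbf{Where your proposal stands.} Your continuous-case plan is essentially the paper's route. An IPW-weighted empirical CDF of oracle scores converges uniformly to $F^*$. The probability integral transform makes $F^*(S^*_{kl})$ exactly uniform given $\widetilde Y_{kl}$ and $M_{kl}=0$. The $2$-Lipschitz property of $F^*$ absorbs score perturbations. The paper routes this through a coupled mask $M'$, but that is a presentational difference.

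\textbf{The gap: the discrete case.} Your argument there rests on $\Pi^{\mathrm{weighted}}_{kl}=G(S^*_{kl})+\Delta_n$ with $\Delta_n=o_P(1)$, and this representation is false under the stated hypotheses. The oracle score is not finitely valued: it equals $1-\pi_{Y}(\widetilde Y)$ on $\{\pi_Y(\widetilde Y)<\alpha\}$. More to the point, $S^*_{kl}$ has an atom at the truncation level $1-\alpha$, exactly where $G$ jumps from $1$ down to $P(\pi_Y(\widetilde Y)<\alpha\mid M=0)\le\alpha$. Whenever $\pi_{Y_{kl}}(\widetilde Y_{kl})\in[\alpha,\alpha+o(1))$, an $o(1)$ error in $\hat\pi$ pushes $S_{kl}$ just above $1-\alpha$. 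Then $\Pi^{\mathrm{weighted}}_{kl}$ sits near the small tail mass while $G(S^*_{kl})=1$.

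For a concrete example, take $\hat Y=\widetilde Y$, let $\pi_1(\widetilde Y)\in\{\alpha,1/2\}$ each with probability $1/2$, and set $\hat\pi_1=\pi_1-1/n$, so (d1) holds. Every oracle score equals $1-\alpha$. Yet on $\{\pi_1(\widetilde Y_{kl})=\alpha,\,Y_{kl}=1\}$ one gets $\Pi^{\mathrm{weighted}}_{kl}\approx \alpha P(\pi_1(\widetilde Y_{ij})=\alpha\mid M_{ij}=0)<\alpha$. So $\Delta_n$ is of order one on an event of non-vanishing probability.

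Unless you assume the law of $\pi_Y(\widetilde Y)$ has no atom at $\alpha$, there is no margin, and the step you flag as the main obstacle cannot be closed as posed. The ``Lipschitz constant'' in Step 3 is also not available here, since $G(S^*_{kl})$ takes at most two values given $\widetilde Y_{kl}$.

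\textbf{The missing idea: the truncation makes rank approximation unnecessary.}
\begin{enumerate}
\item[(1)] On $\{S_{kl}=1-\alpha\}$, every calibration score dominates $S_{kl}$. So $\Pi^{\mathrm{weighted}}_{kl}$ equals the total normalized weight, which is $1+o_P(1)>\alpha$ by (a)--(c), your IPW identity, and the Cauchy--Schwarz bound you already give for the weights.
\item[(2)] Hence non-coverage forces $\hat\pi_{Y_{kl}}(\hat Y_{kl})<\alpha$. This event lies in $\{\pi_{Y_{kl}}(\widetilde Y_{kl})<\alpha+\delta\}\cup\{|\hat\pi_1(\hat Y_{kl})-\pi_1(\widetilde Y_{kl})|\ge\delta\}$.
\item[(3)] For $\alpha+\delta<1/2$, only the minority label $y_{(1)}$ can satisfy the first event. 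Its conditional probability given $\widetilde Y_{kl}$ and $M_{kl}=0$ is therefore $\pi_{y_{(1)}}(\widetilde Y_{kl})\mathbbm{1}(\pi_{y_{(1)}}(\widetilde Y_{kl})<\alpha+\delta)\le\alpha+\delta$.
\item[(4)] The second event is absorbed into $R_n$ by (d1)/(d2) and Markov's inequality.
\end{enumerate}
This one-sided, $\delta$-shifted comparison is what the paper's modified scores $R^*_{ij}$, $R^*_{kl}$ and its final $y_{(1)}$ bound implement.

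\textbf{A smaller correction for the continuous case.} The empirical process result of \citet{10.1214/20-AOS1981} applies to the unconditional jointly exchangeable array, not ``after conditioning on the latent variables and the $\eta$'s''. Handle the IPW fluctuation conditionally on everything except $M$, using Hoeffding plus a union bound over the at most ${N\choose 2}+1$ distinct indicator vectors; this is where (b) is needed. Then apply the array empirical process result only to the unweighted part.
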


The theorem  also holds  under condition (d2) or (e2) when edge splitting is used instead of node splitting. 

\begin{remark}
Explicit convergence rates for Theorems \ref{theorem-graphon-missigness-unconditional}
 and \ref{theorem-asymptotic-conditional-validity-graphon}
involve several factors: the graphon 
estimation rate in condition (c), the empirical 
process approximation for jointly exchangeable arrays, 
and the IPW correction for the distributional shift 
between observed and missing entries, making a 
unified rate statement difficult without restricting 
the generality of the framework. For specific graphon 
estimators and smoothness classes, rates can be 
derived from condition (c) combined with the oracle 
inequalities of \citet{klopp-oracle-inequalities}. 
We also note that explicit convergence rates for 
asymptotic conditional validity results are not 
currently available in the broader conformal 
prediction literature, even under simpler data 
generating mechanisms.
\end{remark}

\begin{remark}
 The stronger conditions in the ``with network covariates" case arise from the potential dependence between the network summary statistics computed on the training set and the calibration scores, which introduces an additional factor of $\rho_n^{-1}$ in the required convergence rate. This dependence vanishes when node splitting is used without full network covariates, since the training and calibration sets involve disjoint node sets. In practice, the dependence between training and calibration sets induced by network statistics is likely to be weaker than this worst-case bound.  

\end{remark}

Theorem \ref{theorem-asymptotic-conditional-validity-graphon} is for the asymptotic regime when the number of observations used to fit $\hat{Y}(\cdot)$ is growing; however, if $\hat{Y}$ is trained (or pre-trained) on a fixed number of points, then convergence of $\hat{Y}$ to a limiting model is not required.  Note that $\widetilde{Y}$ is not assumed to be a consistent estimator of any quantity.  However, in general, standard classifiers and conditional CDF estimators do not necessarily achieve the $L_1$ consistency property required by conditions (d) and (e); inverse probability-weighted (IPW) estimators are often required to achieve consistency.

For a simple example, consider kernel-based estimation of the conditional probability $P(Y=1 \ | \ \tilde{y}, M_{ij} = 0)$ for a binary response; the continuous case is analogous.  An estimator of $\pi_1(\widetilde{y})$ is given by:
\begin{align}
\label{eq-kernel-ipw}
\hat{\pi}_1(\widetilde{y}) = \sum_{\{i,j\} \in \mathcal{H}_1 } \kappa(\hat{Y}_{ij}, \tilde{y}) Y_{ij},  \quad
\kappa(\hat{y}_{ij}, \widetilde{y}) = 
\frac{K\left(\frac{|\hat{Y}_{ij} - \tilde{y}|}{h}\right)\frac{1-\hat{p}_{ij}}{\hat{p}_{ij}}}{ \sum_{\{i,j\} \in \mathcal{H}_1} K\left(\frac{|\hat{Y}_{ij} - \tilde{y}|}{h}\right)\frac{1-\hat{p}_{ij}}{\hat{p}_{ij}} }
\end{align}
for an appropriate choice of kernel function $K:\mathbb{R}^+ \mapsto \mathbb{R}^+$.  Furthermore, let $M' = (M_{ij}')_{1 \leq i,j \leq n}$ be a coupled version of $M$, where $M_{ij}' = \mathbbm{1}(\eta_{ij}' \leq \rho_n w(\xi_i, \xi_j))$ and $(\eta_{ij}')_{1 \leq i < j \leq n}$ is a collection of i.i.d.\ $ \mathrm{Uniform}[0,1]$ random variables independent of all other random variables.  Let $\mathcal{H}_1' = \{\{i,j\} \ : \ M_{ij}' = 0, i,j \in D_1\}$.  Define another kernel regression estimator by 
\begin{align}
\label{eq-kernel-approx}
\tilde{\pi}_1(\widetilde{y}) = \sum_{\{i,j\} \in \mathcal{H}_1' } \kappa(\tilde{Y}_{ij}, \tilde{y}) Y_{ij},  \quad
\kappa(\tilde{Y}_{ij}, \widetilde{y}) = 
\frac{K\left(\frac{|\tilde{Y}_{ij} - \tilde{y}|}{h}\right)}{ \sum_{\{i,j\} \in \mathcal{H}_1'} K\left(\frac{|\tilde{Y}_{ij} - \tilde{y}|}{h}\right) }.
\end{align}

We then have the following result. 
\begin{proposition}
\label{prop-kernel-ipw}
Suppose that $K:\mathbb{R}^+ \mapsto \mathbb{R}^+$ is a bounded and Lipschitz kernel function. Suppose that one of the following conditions hold:
\begin{enumerate}
    \item[(a)] (Without Network Covariates) \\
    $\max_{i,j \in D_1}\frac{E|\hat{Y}_{ij} - \tilde{Y}_{ij}|}{h} = o(1)$ or $\max_{i,j \in D_1} \frac{|\hat{Y}_{ij} - \tilde{Y}_{ij}|}{h} = o_P(1)$
    \item[(b)] (With Network Covariates) \\ 
    $\max_{i,j \in D_1}\frac{E|\hat{Y}_{ij} - \tilde{Y}_{ij}|}{h} = o(\rho_n)$ or $\max_{i,j \in D_1} \frac{|\hat{Y}_{ij} - \tilde{Y}_{ij}|}{h} = o_P(\rho_n)$.
\end{enumerate}
Then, for any $k,l \in D_2$ satisfying $k<l$,

\begin{align*}
\hat{\pi}_1(\hat{Y}_{kl}) - \tilde{\pi}_1(\hat{Y}_{kl}) = o_P(1).
\end{align*}
\end{proposition}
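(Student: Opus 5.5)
We compare the two estimators by passing through two intermediate estimators. Write $K_h(u) = K(u/h)$ and $\omega_{ij} = (1-\hat p_{ij})/\hat p_{ij}$. We write both $\hat\pi_1(\hat Y_{kl})$ and $\tilde\pi_1(\hat Y_{kl})$ as ratios of the form $A/B$. For $\hat\pi_1(\hat Y_{kl})$,
\begin{align*}
A = \frac{1}{\binom{N}{2}(1-\rho_n)}\sum_{\{i,j\}\in\mathcal{H}_1} K_h(|\hat Y_{ij}-\hat Y_{kl}|)\,\omega_{ij} Y_{ij},
\end{align*}
and $B$ is the same sum without $Y_{ij}$. The normalization $\binom{N}{2}$ is chosen so that each numerator and denominator is an average over order $N^2$ pairs. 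Since $K$ and $Y$ are bounded, and since $\hat p_{ij}/\rho_n \ge c$ by condition (a) of Theorem \ref{theorem-graphon-missigness-unconditional}, it suffices to show two things: each numerator and denominator of $\hat\pi_1$ is within $o_P(1)$ of its counterpart for $\tilde\pi_1$, and the limiting denominator is bounded away from zero in probability. The ratio then follows by the continuous mapping / Slutsky argument. The bandwidth $h$ is treated as fixed or slowly varying, and the lower bound on the denominator comes from $K>0$ near $0$ together with nondegeneracy of the distribution of $\tilde Y$.

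\textbf{Step 1 (replace $\hat Y_{ij}$ by $\tilde Y_{ij}$ inside the kernel).} Because $K$ is Lipschitz,
\begin{align*}
|K_h(|\hat Y_{ij}-\hat Y_{kl}|) - K_h(|\tilde Y_{ij}-\hat Y_{kl}|)| \le L|\hat Y_{ij}-\tilde Y_{ij}|/h.
\end{align*}
The IPW weights satisfy $\omega_{ij} \le 1/(c\rho_n)$, and $|\mathcal H_1|/\binom{N}{2} = O_P(\rho_n)$ by a concentration bound for the edge count under (b). So the total perturbation of $A$ and $B$ is at most a constant times
\begin{align*}
\frac{1}{\rho_n \binom{N}{2}}\sum_{i<j\in D_1} M_{ij}\,\frac{|\hat Y_{ij}-\tilde Y_{ij}|}{h}.
\end{align*}
Under case (a), $\hat Y$ is trained on the same nodes but without network covariates. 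We bound $M_{ij}\le 1$ crudely only after conditioning, using $E M_{ij}=O(\rho_n)$ and independence of $\eta_{ij}$ from $\hat Y_{ij}-\tilde Y_{ij}$ when $\hat h$ does not depend on $M_{ij}$ beyond training membership. The max condition then gives $o(1)$ in $L_1$, or $o_P(1)$ directly.

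In case (b), $\hat Y_{ij}$ depends on $M$ through $\mathcal Z_{ij}$, so this factorization fails. We instead use the bound $M_{ij}\le 1$, which loses a factor $\rho_n^{-1}$; this is exactly why case (b) assumes the extra $o(\rho_n)$ rate.

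\textbf{Step 2 (replace $\hat p_{ij}$ by $p_{ij}$).} We have
\begin{align*}
|\omega_{ij}-(1-p_{ij})/p_{ij}| \le |\hat p_{ij}-p_{ij}|/(c^2\rho_n^2).
\end{align*}
Cauchy--Schwarz over the $O(n^2)$ pairs gives a perturbation of order
\begin{align*}
\frac{1}{n^2\rho_n}\cdot\frac{n\|\hat P-P\|_F}{\rho_n}\cdot\sqrt{\rho_n} = \left(\frac{\|\hat P-P\|_F^2}{n^2\rho_n^3}\right)^{1/2},
\end{align*}
where the $\sqrt{\rho_n}$ comes from the number of sampled entries. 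This is $o_P(1)$ by condition (c). The same computation is used in the proof of Theorem \ref{theorem-graphon-missigness-unconditional}.

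\textbf{Step 3 (IPW-to-missing coupling), the main obstacle.} After Steps 1 and 2, the estimator is a sum over observed entries weighted by the true $(1-p_{ij})/p_{ij}$, evaluated with $\tilde Y$; call it $\bar\pi_1$. We must show that its numerator and denominator are within $o_P(1)$ of the corresponding sums over $\mathcal H_1'$ (the entries with $M'_{ij}=0$) that define $\tilde\pi_1$.

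Conditionally on the node variables $(X,\xi,\zeta)$, both are sums of independent terms with the same conditional mean, because $E[M_{ij}(1-p_{ij})/p_{ij}\mid\cdot] = E[1-M'_{ij}\mid\cdot]$ and $\nu_{ij}$ is independent of $\eta_{ij}$ and $\eta'_{ij}$. It therefore suffices to bound the conditional variances, treating the test value $\hat Y_{kl}$ as fixed: it is independent of these sums in case (a) by node splitting, and otherwise we take a supremum over $y$ in a compact grid using the Lipschitz property. Each summand of the observed-entry sum is $O(1/\rho_n)$ with probability $\rho_n$, so the conditional variance is
\begin{align*}
O\!\left(\frac{1}{n^4}\cdot n^2\cdot\frac{1}{\rho_n}\right) = O\!\left(\frac{1}{n^2\rho_n}\right) \to 0
\end{align*}
by (b). Uniformity over $y$ follows from a bracketing argument using the Lipschitz property of $K$ and the boundedness of $\tilde Y$, or alternatively from the exchangeable-array empirical process results of \citet{10.1214/20-AOS1981}. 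We expect this uniformity step to be the most delicate part. Combining the three steps with the ratio argument gives $\hat\pi_1(\hat Y_{kl})-\tilde\pi_1(\hat Y_{kl}) = o_P(1)$.
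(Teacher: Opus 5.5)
Your proof is, in substance, the paper's. The weight error is handled by the same Cauchy--Schwarz bound and condition (c). The IPW sum over $\mathcal{H}_1$ is compared with the coupled sum over $\mathcal{H}_1'$ by concentration given the node variables. The paper applies Hoeffding separately to the centered IPW term $(1-p_{ij})(M_{ij}/p_{ij}-1)$ and to the centered coupling indicator; your single conditional-variance comparison is equivalent. In case (b), your Lipschitz term, bounded via $\omega_{ij}\le 1/(c\rho_n)$ and $M_{ij}\le 1$, is exactly the paper's case (ii) term, and this is where the extra $\rho_n$ comes from. The one structural difference is case (a). The paper performs the Lipschitz swap \emph{last}, on the unweighted coupled sum $\sum_{i<j}(1-M'_{ij})L|\hat Y_{ij}-\tilde Y_{ij}|/h$, whereas you perform it first, on the IPW-weighted sum. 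You are also more careful than the paper about the random evaluation point, which the paper treats as a fixed $\tilde y$.

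In case (a), however, your reordering rests on a false claim. You invoke independence of $\eta_{ij}$ from $\hat Y_{ij}-\tilde Y_{ij}$, but $\hat h$ is fit on $\mathcal{H}_1$, so ``training membership'' is the event $M_{ij}=1$ itself. Node splitting only decouples $\hat h$ from variables on $D_2$, whereas $\hat\pi_1$ and $\tilde\pi_1$ are both sums over pairs in $D_1$. Without independence you only get $E[M_{ij}|\hat Y_{ij}-\tilde Y_{ij}|]=o(h)$, not $o(\rho_n h)$. Your weighted term is then $o(\rho_n^{-1})$ under the $L^1$ alternative, and only the sup-norm alternative goes through, via $|\mathcal{H}_1|/(\rho_n\binom{N}{2})=O_P(1)$.

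This is not an artifact of your ordering. The paper avoids the weighted term, but its Hoeffding terms then contain $K(|\hat Y_{ij}-\tilde y|/h)$, which is not fixed given the node variables, for the same reason. In fact the $L^1$ alternative cannot suffice on its own. Take $Y_{ij}=\mathbbm{1}(\nu_{ij}\le\tilde Y_{ij})$ with $\tilde Y_{ij}=\tilde h(X_i,X_j)$ having a density on $[0.2,0.8]$, $K(u)=(1-u)_+$, $h=0.3$, and an $\hat h$ that interpolates $Y_{ij}$ at training pairs and equals $\tilde h$ elsewhere. Then $E|\hat Y_{ij}-\tilde Y_{ij}|=O(\rho_n)$. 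Yet $\hat\pi_1(y)=1$ for all $y\in(0.7,0.8]$, since only pairs with $Y_{ij}=1$ get positive kernel weight, while $\tilde\pi_1(y)\le 0.8+o_P(1)$. Since $\hat Y_{kl}=\tilde Y_{kl}$ lands in that interval with positive probability, the conclusion fails. That alternative therefore needs an extra hypothesis, e.g.\ $\hat h$ independent of $(\eta_{ij},\nu_{ij})_{i,j\in D_1}$.

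The same false independence appears in your treatment of $\hat Y_{kl}=\hat h(X_k,X_l)$, which depends on the $D_1$ data. Here the fix is already in your proposal: use the uniform-in-$y$ argument in both cases. With $h$ fixed, take a grid of mesh of order $\epsilon h/L$ on a compact interval that contains $\hat Y_{kl}$ with high probability, and apply a union bound over your conditional-variance estimates. Choose the interval from tightness of $\hat Y_{kl}$, not from boundedness of $\tilde Y$, which is not assumed.
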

To establish conditional validity, it may often be easier to approximate an inverse-probability-weighted estimator in this manner before appealing to consistency results. Properties of kernel regression for nonparametric dyadic regression have been studied by \citet{GRAHAM2024105336}.  

We now discuss the graphon estimation condition $(c)$.  In many cases, this assumption leads to a stronger condition on the sparsity level than condition (b).  In particular, for stochastic block models and Lipschitz graphons, the results of \citet{klopp-oracle-inequalities} imply that $\rho_n = \omega(1/\sqrt{n})$ is required in a minimax sense.        

It may not be immediately clear what conditional validity with respect to $\widetilde{Y}$ implies about coverage properties of confidence intervals in which test points are chosen adaptively based on values of $\hat{Y}$ rather than $\widetilde{Y}$.  Suppose that a data analyst chooses a test point to analyze if $T_{kl} =1$, where $T_{kl} \sim \mathrm{Bernoulli}(f(\hat{Y}_{kl}))$.  This setup includes the case where $T_{kl} = \mathbbm{1}(\hat{Y}_{kl} > c)$, which was discussed in the introduction of the paper.  For concreteness, suppose that $T_{kl} = \mathbbm{1}(\gamma_{kl} \leq f(\hat{Y}_{kl}))$, where $(\gamma_{kl})_{1\leq k,l \leq n}$ are mutually independent $\mathrm{Uniform}[0,1]$ random variables independent of all other random variables. We then have the following result. 

\begin{proposition}[Selective Coverage of Conformal 
Prediction Sets]\label{prop-selective-coverage} 
Suppose that the conditions in Theorem 
\ref{theorem-asymptotic-conditional-validity-graphon} 
are satisfied and $\widetilde{Y}_{ij}$ is continuously 
distributed with bounded density for all 
$1 \leq i < j \leq n$. 
Further suppose that $f$ is piecewise Lipschitz with 
a finite number of jump discontinuities, and there exists $A \subseteq \mathbb{R}$ 
such that $P(\widetilde{Y}_{kl} \in A) \geq \delta_0 >0$ 
and $f(y) \geq \gamma > 0$ for all $y \in A$.
Then, for any $k,l \in D_2$ satisfying $k<l$ and any $\delta > 0$, 
for $n$ large enough, 
\begin{align*}
P( Y_{kl} \in \widehat{C}_{kl}^\mathrm{weighted} \ | \ 
M_{2}(k,l)=0, T_{kl} = 1)\geq 1 -\alpha -\delta.
\end{align*}
Moreover, for $n$ large enough, with probability 
tending to $1$, 
\begin{align*}
\frac{\sum_{N+1 \leq k < l \leq n} \mathbbm{1}(\{Y_{kl} \in \widehat{C}_{kl}^\mathrm{weighted} \} \cap \{M_{2}(k,l) = 0\} \cap \{T_{kl}=1\} )}{ \sum_{N+1 \leq k < l \leq n} \mathbbm{1}(\{M_{2}(k,l) = 0\} \cap \{T_{kl}=1\}) } \geq 1-\alpha -R_n,
\end{align*}
where $P(R_n > \delta) \rightarrow 0$. 
\end{proposition}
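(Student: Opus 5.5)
The plan is to reduce both claims to the conditional guarantee of Theorem \ref{theorem-asymptotic-conditional-validity-graphon}, by approximating the selection indicator $T_{kl}$, which is driven by $\hat{Y}_{kl}$, with an indicator driven by $\widetilde{Y}_{kl}$. Let $E_{kl} = \{Y_{kl} \in \widehat{C}_{kl}^{\mathrm{weighted}}\}$ and define the coupled selection $\widetilde{T}_{kl} = \mathbbm{1}(\gamma_{kl} \leq f(\widetilde{Y}_{kl}))$, using the same uniform variable $\gamma_{kl}$.

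\textbf{Step 1 (the coupled selection is harmless).} We have $P(T_{kl} \neq \widetilde{T}_{kl}) \leq E|f(\hat{Y}_{kl}) \wedge 1 - f(\widetilde{Y}_{kl}) \wedge 1|$. Fix $\epsilon>0$ and let $B_\epsilon$ be the $\epsilon$-neighborhood of the finitely many jump points of $f$. Because $\widetilde{Y}_{kl}$ has bounded density, $P(\widetilde{Y}_{kl} \in B_\epsilon) = O(\epsilon)$. Off $B_\epsilon$, on the event $|\hat{Y}_{kl}-\widetilde{Y}_{kl}|<\epsilon$, the piecewise Lipschitz property gives $|f(\hat{Y}_{kl})-f(\widetilde{Y}_{kl})| \leq L\epsilon$. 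Since $|\hat{Y}_{kl}-\widetilde{Y}_{kl}| = o_P(1)$, it follows that $P(T_{kl}\neq \widetilde{T}_{kl}) \to 0$. If $f$ is unbounded, we first truncate at $1$.

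\textbf{Step 2 (coverage under the coupled selection).} Conditionally on $(\widetilde{Y}_{kl}, M_{kl}=0)$, the variable $\gamma_{kl}$ is independent of all the data. Hence
\begin{align*}
P(E_{kl}, \widetilde{T}_{kl}=1 \mid \widetilde{Y}_{kl}, M_{kl}=0) = f(\widetilde{Y}_{kl})\, P(E_{kl} \mid \widetilde{Y}_{kl}, M_{kl}=0) \geq f(\widetilde{Y}_{kl})(1-\alpha - R_n).
\end{align*}
Integrating gives $P(E_{kl}, \widetilde{T}_{kl}=1 \mid M_{kl}=0) \geq (1-\alpha) p_n - E[f(\widetilde{Y}_{kl}) R_n \mid M_{kl}=0]$, where $p_n = P(\widetilde{T}_{kl}=1\mid M_{kl}=0)$. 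Since $0 \leq f R_n$ and $f$ is bounded (after truncation), the fact that $R_n \to 0$ in probability implies $E[\min(R_n,1)]\to 0$, so this last term is $o(1)$. Here we may cap $R_n$ at $1$ without loss, because coverage probabilities lie in $[0,1]$.

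\textbf{Step 3 (denominator bound and transfer).} The denominator satisfies $p_n \geq \gamma\, P(\widetilde{Y}_{kl}\in A \mid M_{kl}=0)$. Since $P(M_{kl}=0)\to 1$ as $\rho_n \to 0$, this is at least $\gamma\delta_0/2$ for large $n$. Combining with Step 1, the events $\{T_{kl}=1\}$ and $\{\widetilde{T}_{kl}=1\}$ differ with conditional probability $o(1)$. Both the numerator and the denominator of $P(E_{kl}\mid M_{kl}=0, T_{kl}=1)$ therefore change by $o(1)$, and the ratio is at least $1-\alpha-\delta$ for large $n$. I read $M_2(k,l)=0$ as $M_{kl}=0$ for $k,l\in D_2$.

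\textbf{Step 4 (empirical version, the main obstacle).} The numerator and denominator are sums over $\binom{N}{2}$ pairs. Joint exchangeability within $D_2$ gives each pair the same marginal probability, so the expectations are $\binom{N}{2}$ times the single-pair quantities from Steps 2–3. The remaining difficulty is concentration: I would show that the variance of each normalized sum is $o(1)$.
\begin{itemize}
\item Pairs $\{k,l\}$ and $\{k',l'\}$ sharing no node are asymptotically independent given the calibration data. The one caveat is that the sets $\widehat{C}$ share the weighted calibration quantile.
\item I would handle that shared quantile by replacing it with its population limit: the proof of Theorem \ref{theorem-asymptotic-conditional-validity-graphon} shows that the weighted empirical CDF of scores converges.
\item After this replacement, the indicators are functions of node-level latent variables, edge noise and $\gamma_{kl}$ only. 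A Hoeffding/Aldous–Hoover style variance bound then gives $O(1/n)$ from pairs sharing a node, while the contribution from disjoint pairs vanishes.
\end{itemize}
The error from the quantile replacement is absorbed into $R_n$. The ratio then converges to a limit of at least $1-\alpha-\delta$ in probability.
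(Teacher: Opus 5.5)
Your proposal is correct and follows essentially the same route as the paper. The shared steps are: couple $T_{kl}$ with $\widetilde{T}_{kl}=\mathbbm{1}(\gamma_{kl}\le f(\widetilde{Y}_{kl}))$ via the jump-neighbourhood and bounded-density argument; integrate the conditional guarantee of Theorem \ref{theorem-asymptotic-conditional-validity-graphon} against $f(\widetilde{Y}_{kl})$, with the denominator bounded below by roughly $\gamma\delta_0$; and, for the empirical ratio, replace each coverage event by an oracle event that is a function of a dissociated array and show it concentrates. Your direct integration of $R_n$ and your variance bound play the roles of the paper's $\mathcal{A}_\delta$/dominated-convergence step and its Hoeffding bound for U-statistics. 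In Step 4, make explicit that you replace the whole event by $\{F^*(S^*_{kl})\le 1-\alpha-\delta\}$, including the scores and the selection and not just the shared quantile, and control the averaged per-pair error $\mathbbm{1}(|\hat F(S_{kl})-F^*(S^*_{kl})|>\delta)$ by Lemma \ref{lemma-continuous-conditional} and Markov's inequality, exactly as the paper does.
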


Although our approach is asymptotic in nature, it is very robust to the choice of selection rule.  In fact, the selection rule does not need to be known or estimated, which differs from existing methods such as
\citet{10.1093/biomet/asae010}.  Therefore, our approach has fewer ``researcher degrees of freedom,'' which is an important practical consideration in selective inference problems.  Our approach also makes full use of the calibration set and does not discard points that are not chosen by the selection rule, leading to potentially shorter widths.  However, it still does not guarantee validity for certain selection rules such as the sample maximum for $\hat{Y}$, for which the approach of \citet{10.1093/biomet/asae010} also fails.

Proposition \ref{prop-selective-coverage} provides an average coverage guarantee over ``interesting'' points, which is desirable but differs from false discovery rate (FDR) control.  FDR control for $p$-values from conformal prediction was previously studied by \citet{JMLR:v24:22-1176}.  In the jointly exchangeable array setting, it is not immediately obvious whether conformal $p$-values satisfy positive regression dependence, a condition often needed to establish validity of strategies involving the Benjamini-Hochberg procedure.  We leave the exploration of FDR control in the array setting to future work.

\section{Experiments}
\subsection{Finite-Sample Validity for Sampled Elements}
\label{simulations-sampled-elements}
In this section, we study empirical performance of the conformal prediction methods proposed in Section \ref{sec-conformal-for-sampled-elements}. 
First, we compare different splitting mechanisms from the examples in Section \ref{sec:split-conformal}, and then evaluate the row-column approach proposed in Section \ref{subsec:row-column-conformal}. 

\subsubsection{Comparing Splitting Mechanisms}
\label{sec-ex1}
The full description of the data-generating process for the experiments is given in Appendix \ref{sec-ex1-appendix};  here we describe the most important pieces.    For this set of comparisons, we consider the model

\begin{align}
\label{eq-exp-compare-split}
    Y_{ij} &= \text{sigmoid}\!\left(X_{i1}X_{j1}+X_{i2}X_{j2} - E(X_{i1}X_{j1}+X_{i2}X_{j2})\right) \nonumber 
    \\
    &\quad +\, c_1(\xi_{i1}\xi_{j1}+\xi_{i2}\xi_{j2})+c_2\epsilon_{ij},
\end{align}
where $c_1=0.9$, $c_2 = 0.25$  control the relative contributions of the latent signal and noise, and variables  $\epsilon_{ij}$ are drawn i.i.d.\ $ \mathcal{N}(0, 1)$. The responses are always centered  and scaled to have a standard deviation of $4$ to ensure fair comparisons. The node covariates $[X_1, X_2, X_3, X_4]$ follow a multivariate normal distribution, and the latent positions $\xi_{1}$, $\xi_2$ are constructed by normalizing and rescaling $X_3$, $X_4$ to lie on an ellipse segment within the unit square. 
The observed entries have a complex dependence because latent positions influence both the response $Y_{ij}$ and the missingness mechanism, with the masking matrix entries drawn independently as 
\begin{align*}
    M_{ij}\sim \text{Bernoulli}(\rho_n (\xi_{i1}\xi_{j1}+\xi_{i2}\xi_{j2} )),
\end{align*}
where $\rho_n$ controls the sparsity level. Both $Y$ and $M$ are symmetric with diagonals set to $0$ to represent an undirected graph without self-loops. 

To compare different splitting methods, we fit both a linear model and a random forest model $\hat{\mu}(\cdot)$ with node covariates $X_{1i}, X_{1j}, X_{2i}$ and $X_{2j}$ on observed entries ($M_{ij}=1$).  For both node splitting and edge splitting, we consider only upper-triangular elements.   The test point is pre-specified, without loss of generality, as $Y_{n-1, n}$. For the selected column method, the test column is chosen as the column with the highest degree after removing the test row; additional details are provided in Appendix \ref{sec-ex1-appendix}. Conformal prediction intervals for the test point are constructed using the nonconformity score $S_{ij}= s(Y_{ij}, \boldsymbol{\hat{\mu}}_{ij}) = |Y_{ij} - \boldsymbol{\hat{\mu}}_{ij}|$ for a given model $\hat{\mu}$ on the calibration set. We present results for graph size $n=200$, significance level $\alpha =0.1$, sparsity level $\rho_n$ = 1, and number of iterations $500$ in Table \ref{tab-ex1-compare-splitting-method}. 

\begin{table}[ht]
\centering
\begin{tabular}{ |c|l|c|c| }
\hline
 Method & Model & Coverage & Width \\
 \hline
 \multirow{2}{*}{Node Splitting}
     & Linear  & 0.894 & 9.30 \\
     & RF & 0.882 & 7.68 \\
 \hline
 \multirow{2}{*}{Edge Splitting}
     & Linear & 0.900 & 9.20 \\
     & RF & 0.888 & 7.15 \\
 \hline
 \multirow{2}{*}{Selected Column}
     & Linear & 0.894 & 9.07 \\
     & RF & 0.914 & 6.87 \\
 \hline
\end{tabular}
\caption{Comparison of coverage and width of node, edge, and selected column splitting methods for both linear model and random forest model.}
\label{tab-ex1-compare-splitting-method}
\end{table}

The coverage of the conformal prediction intervals is close to the nominal level of $0.9$ for all splitting methods, with all values falling within the binomial confidence interval $(0.874, 0.926)$. The random forest model consistently produces narrower intervals than the linear model as expected, since it better captures the nonlinear relationship between the node covariates and the response. Nevertheless, both models achieve coverage close to the nominal level, which is consistent with the finite-sample validity guarantee provided by Theorem \ref{theorem-split-conformal}. Comparing widths across different splitting methods, edge splitting tends to produce narrower intervals than node splitting, a pattern we observe across other response functions as well. The width of the selected column method depends on the choice of column; on average, it produces the shortest intervals in this setting.

\subsubsection{Comparing the Row-Column Approach}
\label{sec-ex2}
We now evaluate the row-column approach with node splitting proposed in Section \ref{subsec:row-column-conformal} and compare it with the standard node splitting method. We consider three variants of the data-generating process. The first, symmetric and homoscedastic case, is identical to the setting for Section \ref{simulations-sampled-elements},
while in the remaining two cases, $Y_{ij}$ and $M_{ij}$ are no longer constrained to be symmetric. The second case is for directed networks, which introduces asymmetry in all pairs $i \neq j$ of $Y$, with constants  $c_1 = 1.1$, $c_2=0.22$ so that relative contributions of the latent signal and noise remain comparable after standardization: 
\begin{align*}
Y_{ij} &= \text{sigmoid}\!\left(X_{i1}X_{j1}^2+X_{i2}X_{j2}^2 - E(X_{i1}X_{j1}^2+X_{i2}X_{j2}^2)\right) \\
&\quad +\, c_1(\xi_{i1}\xi_{j1}^2+\xi_{i2}\xi_{j2}^2)+c_2\epsilon_{ij}.
\end{align*}
The third case uses the same model structure as the first, but introduces heteroscedastic noise amplified by the node covariates, 
\begin{align*}
Y_{ij} &= \text{sigmoid}\!\left(X_{i1}X_{j1}+X_{i2}X_{j2} - E(X_{i1}X_{j1}+X_{i2}X_{j2})\right) \\
&\quad +\, c_1(\xi_{i1}\xi_{j1}+\xi_{i2}\xi_{j2})+c_2(X_{i1}+X_{j1})\epsilon_{ij}.
\end{align*}
Here, we set $c_1=0.9$ and $c_2=0.1$ to again keep the three cases comparable. For the symmetric case, we still consider only the upper-triangular entries with $i<j$, while for the other two all off-diagonal entries are included.

For standard node splitting  and a prespecified test point $(k,l)$, the conformal prediction interval is constructed from the absolute residuals on the calibration set, as described in Section \ref{sec:split-conformal}. For the row-column approach, the row and column ranks $R_{ij}$ and $C_{ij}$ are computed for each entry in the calibration set, using entries sharing the same row or column as reference. The resulting rank pairs are then mapped to a reference distribution supported on the unit circle translated to the first quadrant via optimal transport as described in Section \ref{subsec:row-column-conformal}.

Since the responses are continuous, it is reasonable to assume that the nonconformity scores are distinct almost surely, and the rank pairs can only change when $S_{kl}$ crosses one of the scores of entries sharing a row or column with the test point. To construct the prediction set, we scan through these candidates in descending order to identify the value of $S_{kl}$ at which $\check{\Pi}_{kl} > \alpha$ first occurs, recomputing the optimal transport map at each step. A detailed description of this procedure is provided in Appendix \ref{sec-ex2-appendix}. We present results for $n=200$, $\alpha =0.1$, $\rho_n = 0.8$, and number of iterations $500$ in Table \ref{tab-ex2-compare-row-column-node}.

\begin{table}[ht]
\centering
\begin{tabular}{ |c|c|c|c|c| }
 \hline
 Case & Method & Model & Coverage & Width \\
 \hline
 \multirow{4}{*}{1} 
     & \multirow{2}{*}{Standard} & Linear & 0.896 & 9.30 \\
     &                      & RF & 0.880 & 7.77 \\
 \cline{2-5}
     & \multirow{2}{*}{Row-Column} & Linear & 0.894 & 9.38 \\
     &                              & RF & 0.876 & 7.80 \\
 \hline
 \multirow{4}{*}{2} 
     & \multirow{2}{*}{Standard} & Linear & 0.896 & 9.75 \\
     &                      & RF & 0.896 & 7.57 \\
 \cline{2-5}
     & \multirow{2}{*}{Row-Column} & Linear & 0.892 & 9.68 \\
     &                              & RF & 0.906 & 7.44 \\
 \hline
\multirow{4}{*}{3} 
     & \multirow{2}{*}{Standard} & Linear & 0.904 & 9.05 \\
     &                      & RF & 0.902 & 7.43 \\
 \cline{2-5}
     & \multirow{2}{*}{Row-Column} & Linear & 0.898 & 8.88 \\
     &                              & RF & 0.892 & 6.94 \\
 \hline
\end{tabular}
\caption{Comparison of standard conformal prediction sets with row-column approach under node splitting.  The underlying model ranges from (1) basic (symmetric and homoscedastic), (2) asymmetric and (3) heteroscedastic.}
\label{tab-ex2-compare-row-column-node}
\end{table}

The coverage of both methods is close to the nominal level of $0.9$ across all three cases. The row-column approach produces narrower intervals than the standard node splitting method in the asymmetric and heteroscedastic cases, and the two are similar in the basic case. This is consistent with our intuition that entries sharing a row or column with the test point are more informative for calibration, as suggested by the Aldous-Hoover theorem. A similar pattern is observed when the row-column approach is combined with edge splitting; see Appendix \ref{sec-ex2-appendix} for further details.

\subsection{Asymptotic Validity for Missing Elements}
\label{sec-ex3}
In this section, we consider the setting where the test point is a missing entry and evaluate the graphon-weighted conformal prediction procedure proposed in Section \ref{sec-weighted-conformal}. Our empirical focus is primarily on evaluating the finite-sample performance under network dependence. We focus on marginal properties;  under the conditional validity framework, one must simultaneously optimize over parameters for the conditional regression estimators $\hat{Y}$ and the IPW-weighted conditional CDFs, and we leave optimal data-driven tuning parameter selection for this regime to future work.

We consider two data-generating processes that differ in the dimension of the latent covariates used for the response and the masking matrix. For the first model, the response is given by
\begin{align*}
    Y_{ij} 
    &= \text{sigmoid}\!\left(X_{i1}X_{j1}+X_{i2}X_{j2} - E(X_{i1}X_{j1}+X_{i2}X_{j2})\right) \\
    &\quad +\, c_1\xi_{i}\xi_{j}+c_2\epsilon_{ij},
\end{align*}
where $c_1$ and $c_2$ are as defined in Section \ref{sec-ex1}. The masking matrix is given by
\begin{align*}
    M_{ij}\sim \text{Bernoulli}(\rho_n \omega (\xi_i, \xi_j) ).
\end{align*} 
For the first setting, we use a graphon model with one-dimensional latent positions 
$\xi_i$,  $\omega (\xi_i, \xi_j)  = 1-|\xi_i - \xi_j|$, and for the second, we use the same two-dimensional inner product model as in Section \ref{sec-ex1}, $\omega (\xi_i, \xi_j)  = \xi_{i1}\xi_{j1} + \xi_{i2} \xi_{j2}$. For both models, the response $Y_{ij}$ and the masking matrix $M_{ij}$ share a complex dependency structure through the latent variables, and the conditional distributions of  $Y_{ij} \ | \ M_{ij}=0$ and $Y_{ij} \ | \ M_{ij}=1$ differ. 

As discussed in Section \ref{sec-weighted-conformal}, constructing the weighted conformal prediction interval for the missing elements requires estimation of the 
probability matrix $P$. We consider two estimation methods to solve this task. The 
first is the Universal Singular Value Thresholding (USVT) method  \cite{d6cb28bb-5a7b-386d-8ced-b68d43954df3}, which estimates $P$ 
by setting the singular values of the masking matrix below a pre-defined universal threshold to zero. While the original method assumes independent 
entries, the authors note that it extends to dependent entries with 
minor adjustments to the threshold. In our experiments, we set the threshold to be $3.03\sqrt{n}$, which is universal across all settings. The second graphon estimation method we include is Neighborhood Smoothing (NS)~\cite{repec:oup:biomet:v:104:y:2017:i:4:p:771-783.}, which estimates each entry of $P$ by averaging neighboring entries of the masking matrix. 
\begin{figure}[ht]
\centering
    \includegraphics[width=0.8\linewidth]{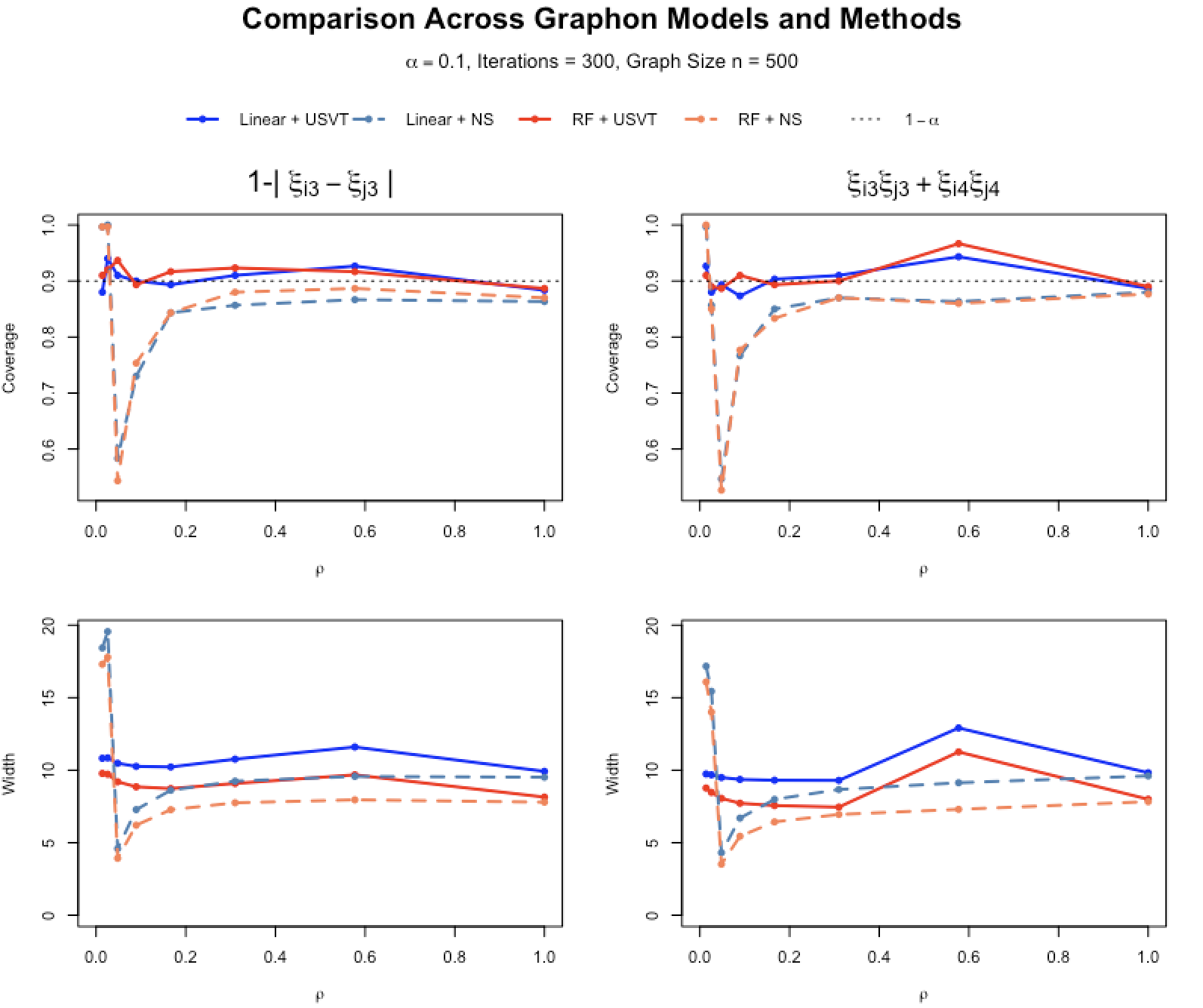}
    \caption{Top row: coverage of the weighted conformal prediction 
    interval for a missing entry across two graphon models as the 
    sparsity parameter varies. Bottom row: corresponding width of 
    the prediction intervals. Results are shown for linear and RF fits and  USVT and 
    NS graphon estimation methods.}
    \label{fig-ex3-missing}
\end{figure}

Results for $n=500$, $\alpha = 0.1$, and $300$ iterations are presented in Figure \ref{fig-ex3-missing}, where coverage and width are plotted as functions of the sparsity level $\rho_n$, ranging from $2n^{-0.8}$ to $\min\{2n^{-0.1}, 1\}$. For $\rho_n \geq 2n^{-0.3}$, coverage is close to the nominal level across all methods; however, for sparser graphs, the USVT graphon estimation performs better, possibly because it provides more accurate estimates of the probability matrices for sparser graphs. 
A precise theoretical 
understanding of how the properties of graphon estimators 
interact with the validity of weighted conformal procedures is 
an interesting direction for future work. 

On the other hand, for moderate to dense graphs ($\rho_n \geq n^{-0.7}$), the NS method consistently produces narrower prediction intervals than the USVT method.
Finally, the random forest model produces narrower conformal prediction intervals than the linear model, as in the previous simulation, and results are similar for the two graphon models across all settings. 

\subsection{Example: Predicting Common Citations}
\label{sec-ex4}
We illustrate our split conformal prediction procedure on the Cora dataset \citep{mccallum2000cora}, which contains data on $2708$ machine learning publications and $10556$ citations between papers. Each paper is classified into one of $7$ categories and is associated with a $1433$-dimensional binary feature vector, indicating the presence or absence of specific words. We consider the task of predicting the joint citation 
count between pairs of papers, which is of 
practical interest in applications such as paper 
recommendation systems, where papers with many common citations are likely to be closely related. 

In order to evaluate the performance of our method on ground truth, we generate a sampling mechanism in which each pair is selected with a probability that depends on node-level features through a symmetric function, creating a missingness mechanism that depends on the data. Selection based on node features is a realistic scenario: for instance, in proteomics recorded interactions are strongly biased toward well-studied proteins \citep{blumenthal2024emergence}, and in citation networks, whether a pair of papers is observed may depend on properties such as venue and full-text availability.  Our goal is to construct prediction sets for pairs sampled according to this mechanism ($M_{ij}=1$) but withheld from the calibration set, allowing us to evaluate empirical coverage against ground truth.

Let $A$ denote the (weighted) adjacency matrix representing citations between papers. We consider the response matrix 
$Y = A A^T$, that is, $Y_{ij}$ is the number of papers cited by both paper $i$ and paper $j$.   To mimic the ``study bias'' (the common practice of prioritizing experiments on pairs believed likely to interact), we generate the missingness mask by preferentially observing pairs that a graph neural network predicts are likely to interact. This yields a complex, feature-dependent missingness mechanism that is unknown to the calibration procedure.

To this end, we estimate a probability matrix $\hat{P}$ from $A$ and the node features using a graph attention network, then threshold at $0.5$ to obtain
\begin{align*}
    M_{ij} = \mathbbm{1}(\hat{p}_{ij} > 0.5).
\end{align*}
The resulting masking matrix has a density of $0.12$. Details of the graph attention network architecture \cite{brody2022how} and training procedure are provided in Appendix \ref{sec-ex4-appendix}. We consider three splitting methods: node splitting, edge splitting, and selected column splitting. For node splitting, nodes are partitioned with a $6/3/1$ ratio, and the observed edges within each subgraph serve as the training, calibration, and test sets. For edge splitting, the observed edges are randomly partitioned with the same $6/3/1$ ratio. For the selected column method, the column with the highest degree is selected, and its entries are split into calibration and test sets with a $9/1$ ratio, with all remaining entries used for training.

We fit a random forest model on the training set to predict $Y_{ij}$ for observed pairs ($M_{ij}=1$) using node-level features constructed from $X_{i}$ and $X_{j}$, where $X$ is the word list of each node, and the class labels of each node. These features include similarity measures (Jaccard similarity, Cosine similarity, Hadamard product sum) of $X_{i}$ and $X_{j}$, word counts (number of words in $X_{i}$ and $X_{j}$, number of common words, and number of 
distinct words between $X_{i}$ and $X_{j}$), and class information. Conformal prediction intervals are constructed using the absolute residual nonconformity score, and coverage is computed by averaging over all test points. We set $\alpha = 0.05$ and present results over $100$ iterations in Table \ref{tab-ex3-real-world}. The coverage on the test set is $0.950$ for node splitting and edge splitting, and $0.949$ for selected column method, which is consistent with the nominal level of $0.95$. Among the three splitting methods, the selected column method produces narrower intervals, which may be attributed to both the larger training set for the fitted random forest model, and the similarity between calibration and test entries that share a column.  While the masking matrix in this example was constructed from a fitted model rather than a known generative process, it provides a complex setting, and yet all of our methods perform well.   

\begin{table}[ht]
\centering
\begin{tabular}{ |C{3cm}|C{2cm}|C{2cm}|C{2cm}| }
 \hline
Splitting & Model & Coverage & Width \\
 \hline
 \multirow{1}{*}{Node}
     & RF & 0.950 & 1.62 \\
 \hline
  \multirow{1}{*}{Edge}
     & RF & 0.950 & 1.59 \\
 \hline
  \multirow{1}{*}{Selected Column}
     & RF & 0.949 & 0.18 \\
 \hline
\end{tabular}
\caption{Results for the Cora dataset. Reported coverage and width averaged over test nodes.}
\label{tab-ex3-real-world}
\end{table}

\section{Discussion}

In this paper, we have developed a framework for conformal prediction 
in dyadic regression problems under complex missingness mechanisms. 
Our contributions include both a general theoretical 
framework for conformal prediction beyond exchangeability, and 
specific methodology for array prediction problems under joint 
exchangeability.

The key theoretical contribution is  superuniformity of 
conformal prediction under distributional invariance conditions 
weaker than exchangeability; Theorem \ref{theorem-super-unform-be} handles 
the case where the sample itself is a random subset of the index 
set, a setting not covered by existing theory. The proof 
introduces a novel bijection argument that works directly with 
the joint distribution of nonconformity scores and inclusion indicators, circumventing standard rank-identity arguments which fail when observations can leave the sample under group transformations. We believe these results are of substantial independent interest beyond the array prediction setting considered here.

Several directions for future work are worth highlighting. First, establishing simultaneous coverage guarantees for groups of missing entries under the jointly exchangeable framework would extend the individual coverage results of Section \ref{sec-conformal-for-sampled-elements}
 in a practically useful direction. Second, the beyond-exchangeability framework of Section \ref{sec-beyond-exchangeability} may find applications in other settings where group invariance holds but full exchangeability does not, including time series with cyclic stationarity, spatial data with rotational invariance, and tensor-valued observations.  Finally, extending the weighted conformal results of Section \ref{sec-weighted-conformal} to handle 
unbounded graphons and heavier-tailed missingness mechanisms  would broaden the applicability of the framework.

\subsection*{Acknowledgments}
R. Lunde gratefully acknowledges support from NSF grant DMS-2515923. E. Levina and J. Zhu's research was partially supported by NSF grants DMS-2210439 and DMS-2610168.  This project was started while the first author was a postdoctoral fellow at the University of Michigan. The authors acknowledge the use of Claude Opus 5 and Claude Fable 5 for reviewing the pre-submission manuscript and for suggesting an alternative proof strategy for Theorem 5.1. This suggestion resulted in the removal of one condition without changing the conclusion relative to the version previously uploaded to arXiv. AI was used for proofreading in previous versions but not for mathematical proofs until the current version. The new proof was independently and fully verified by the authors.


 \bibliographystyle{apalike}
 \bibliography{mybib.bib}

@book{cameron1999permutation,
  author    = {Cameron, Peter J.},
  title     = {Permutation Groups},
  series    = {London Mathematical Society Student Texts},
  number    = {45},
  publisher = {Cambridge University Press},
  year      = {1999}
}

@article{blumenthal2024emergence,
  author  = {Blumenthal, David B. and Lucchetta, Marta and Kleist, Linda and Fekete, S{\'a}ndor P. and List, Markus and Schaefer, Martin H.},
  title   = {Emergence of power law distributions in protein--protein interaction networks through study bias},
  journal = {eLife},
  volume  = {13},
  pages   = {e99951},
  year    = {2024},
  doi     = {10.7554/eLife.99951}
}

@article{gao2015rate,
  author    = {Gao, Chao and Lu, Yu and Zhou, Harrison H.},
  title     = {Rate-optimal graphon estimation},
  journal   = {The Annals of Statistics},
  volume    = {43},
  number    = {6},
  pages     = {2624--2652},
  publisher = {Institute of Mathematical Statistics},
  year      = {2015},
  doi       = {10.1214/15-AOS1354}
}

@book{niederreiter1992,
  author    = {Niederreiter, Harald},
  title     = {Random Number Generation and Quasi-{M}onte {C}arlo Methods},
  series    = {{CBMS}-{NSF} Regional Conference Series in Applied Mathematics},
  volume    = {63},
  publisher = {Society for Industrial and Applied Mathematics ({SIAM})},
  address   = {Philadelphia, PA},
  year      = {1992}
}

@misc{koning2026measuringevidenceexchangeabilitygroup,
  author        = {Nick W. Koning},
  title         = {Measuring Evidence against Exchangeability and Group Invariance with E-values},
  year          = {2026},
  eprint        = {2310.01153},
  archiveprefix = {arXiv},
  primaryclass  = {math.ST},
  howpublished  = {Preprint, arXiv:2310.01153},
  url           = {https://arxiv.org/abs/2310.01153}
}

@misc{huang2026dataaugmentedbootstrapunifying,
  author        = {Kevin Han Huang},
  title         = {Data augmented bootstrap: Unifying confidence interval construction by approximate invariance},
  year          = {2026},
  eprint        = {2606.09049},
  archiveprefix = {arXiv},
  primaryclass  = {stat.ME},
  howpublished  = {Preprint, arXiv:2606.09049},
  url           = {https://arxiv.org/abs/2606.09049}
}

@misc{conditional_symmpi_2026,
  author        = {Yichen Shen and Mengxin Yu},
  title         = {Conditional Predictive Inference for General Structured Data with Group Symmetries},
  year          = {2026},
  eprint        = {2605.17934},
  archiveprefix = {arXiv},
  primaryclass  = {stat.ME},
  howpublished  = {Preprint, arXiv:2605.17934},
  url           = {https://arxiv.org/abs/2605.17934}
}

@article{unifying-conformal-2025,
  author    = {Barber, Rina Foygel and Tibshirani, Ryan J.},
  title     = {Unifying different theories of conformal prediction},
  journal   = {Electronic Journal of Statistics},
  volume    = {20},
  number    = {1},
  pages     = {1428--1474},
  publisher = {The Institute of Mathematical Statistics and the Bernoulli Society},
  year      = {2026},
  doi       = {10.1214/26-EJS2513}
}

@article{mccallum2000cora,
  author  = {McCallum, Andrew Kachites and Nigam, Kamal and Rennie, Jason and Seymore, Kristie},
  title   = {Automating the construction of internet portals with machine learning},
  journal = {Information Retrieval},
  volume  = {3},
  number  = {2},
  pages   = {127--163},
  year    = {2000}
}

@misc{paul2026probability,
  author = {Manit Paul and Arun Kumar Kuchibhotla},
  title  = {On a Probability Inequality for Order Statistics with Applications to Bootstrap, Conformal Prediction, and more},
  year   = {2026}
}

@article{Liang28042026,
  author    = {Ziyi Liang and Tianmin Xie and Xin Tong and Matteo Sesia},
  title     = {Structured Conformal Inference for Matrix Completion with Applications to Group Recommender Systems},
  journal   = {Journal of the American Statistical Association},
  volume    = {0},
  number    = {ja},
  pages     = {1--25},
  publisher = {Taylor \& Francis},
  year      = {2026},
  doi       = {10.1080/01621459.2026.2658287}
}

@misc{du2025conformallinkpredictionfalse,
  author        = {Wenqin Du and Wanteng Ma and Dong Xia and Yuan Zhang and Wen Zhou},
  title         = {Conformal Link Prediction with False Discovery Rate Control},
  year          = {2025},
  eprint        = {2507.07025},
  archiveprefix = {arXiv},
  primaryclass  = {stat.ME},
  howpublished  = {Preprint, arXiv:2507.07025},
  url           = {https://arxiv.org/abs/2507.07025}
}

@misc{klein2025multivariateconformalpredictionusing,
  author        = {Michal Klein and Louis Bethune and Eugene Ndiaye and Marco Cuturi},
  title         = {Multivariate Conformal Prediction using Optimal Transport},
  year          = {2025},
  eprint        = {2502.03609},
  archiveprefix = {arXiv},
  primaryclass  = {stat.ML},
  howpublished  = {Preprint, arXiv:2502.03609},
  url           = {https://arxiv.org/abs/2502.03609}
}

@inproceedings{10.5555/2999792.2999868,
  author    = {Cuturi, Marco},
  title     = {Sinkhorn distances: lightspeed computation of optimal transport},
  booktitle = {Advances in Neural Information Processing Systems},
  volume    = {26},
  pages     = {2292--2300},
  publisher = {Curran Associates, Inc.},
  year      = {2013}
}

@article{10.1214/20-AOS1996,
  author    = {Marc Hallin and Eustasio del Barrio and Juan Cuesta-Albertos and Carlos Matr{\'a}n},
  title     = {Distribution and quantile functions, ranks and signs in dimension d: A measure transportation approach},
  journal   = {The Annals of Statistics},
  volume    = {49},
  number    = {2},
  pages     = {1139--1165},
  publisher = {Institute of Mathematical Statistics},
  year      = {2021},
  doi       = {10.1214/20-AOS1996}
}

@misc{ritzwoller2025randomizationinferencetheoryapplications,
  author        = {David M. Ritzwoller and Joseph P. Romano and Azeem M. Shaikh},
  title         = {Randomization Inference: Theory and Applications},
  year          = {2025},
  eprint        = {2406.09521},
  archiveprefix = {arXiv},
  primaryclass  = {math.ST},
  howpublished  = {Preprint, arXiv:2406.09521},
  url           = {https://arxiv.org/abs/2406.09521}
}

@article{GAUCHER2021299,
  author  = {Solenne Gaucher and Olga Klopp},
  title   = {Maximum likelihood estimation of sparse networks with missing observations},
  journal = {Journal of Statistical Planning and Inference},
  volume  = {215},
  pages   = {299--329},
  year    = {2021},
  doi     = {10.1016/j.jspi.2021.04.003}
}

@article{9762725,
  author  = {Bhattacharya, Sohom and Chatterjee, Sourav},
  title   = {Matrix Completion With Data-Dependent Missingness Probabilities},
  journal = {IEEE Transactions on Information Theory},
  volume  = {68},
  number  = {10},
  pages   = {6762--6773},
  year    = {2022},
  doi     = {10.1109/TIT.2022.3170244}
}

@article{10.1145/3012704,
  author    = {Mart\'{\i}nez, V\'{\i}ctor and Berzal, Fernando and Cubero, Juan-Carlos},
  title     = {A Survey of Link Prediction in Complex Networks},
  journal   = {ACM Comput. Surv.},
  volume    = {49},
  number    = {4},
  publisher = {Association for Computing Machinery},
  address   = {New York, NY, USA},
  year      = {2016},
  doi       = {10.1145/3012704}
}

@article{GRAHAM2024105336,
  author  = {Bryan S. Graham and Fengshi Niu and James L. Powell},
  title   = {Kernel density estimation for undirected dyadic data},
  journal = {Journal of Econometrics},
  volume  = {240},
  number  = {2},
  pages   = {105336},
  year    = {2024},
  doi     = {10.1016/j.jeconom.2022.06.011}
}

@article{MarrsFosdick2023,
  author  = {Marrs, Frank W. and Fosdick, Bailey K.},
  title   = {Regression of binary network data with exchangeable latent errors},
  journal = {Network Science},
  volume  = {11},
  number  = {3},
  pages   = {502--535},
  year    = {2023},
  doi     = {10.1017/nws.2023.12}
}

@article{https://doi.org/10.1002/tie.5060050113,
  author  = {Tinbergen, Jan},
  title   = {Shaping the world economy},
  journal = {The International Executive},
  volume  = {5},
  number  = {1},
  pages   = {27--30},
  year    = {1963},
  doi     = {10.1002/tie.5060050113}
}

@article{apicella-hunter-gatherer,
  author  = {Apicella, Coren L. and Marlowe, Frank W. and Fowler, James H. and Christakis, Nicholas A.},
  title   = {Social networks and cooperation in hunter-gatherers},
  journal = {Nature},
  volume  = {481},
  number  = {7382},
  pages   = {497--501},
  year    = {2012},
  doi     = {10.1038/nature10736}
}

@article{Oneal_Russett_1999,
  author  = {Oneal, John R. and Russett, Bruce},
  title   = {{The Kantian} Peace: The {Pacific} Benefits of Democracy, Interdependence, and International Organizations, 1885--1992},
  journal = {World Politics},
  volume  = {52},
  number  = {1},
  pages   = {1--37},
  year    = {1999},
  doi     = {10.1017/S0043887100020013}
}

@inproceedings{pmlr-v202-zaffran23a,
  author    = {Zaffran, Margaux and Dieuleveut, Aymeric and Josse, Julie and Romano, Yaniv},
  title     = {Conformal prediction with missing values},
  booktitle = {Proceedings of the 40th International Conference on Machine Learning},
  series    = {Proceedings of Machine Learning Research},
  volume    = {202},
  pages     = {40578--40604},
  publisher = {PMLR},
  year      = {2023}
}

@inproceedings{pmlr-v204-luo23a,
  author    = {Luo, Rui and Nettasinghe, Buddhika and Krishnamurthy, Vikram},
  title     = {Anomalous edge detection in edge exchangeable social network models},
  booktitle = {Proceedings of the Twelfth Symposium on Conformal and Probabilistic Prediction with Applications},
  series    = {Proceedings of Machine Learning Research},
  volume    = {204},
  pages     = {287--310},
  publisher = {PMLR},
  year      = {2023}
}

@article{marandon-2024,
  author  = {Marandon, Ariane},
  title   = {Conformal link prediction for false discovery rate control},
  journal = {TEST},
  volume  = {33},
  number  = {4},
  pages   = {1062--1083},
  year    = {2024},
  doi     = {10.1007/s11749-024-00934-w}
}

@misc{shao2023distributionfreematrixpredictionarbitrary,
  author        = {Meijia Shao and Yuan Zhang},
  title         = {Distribution-Free Matrix Prediction Under Arbitrary Missing Pattern},
  year          = {2023},
  eprint        = {2305.11640},
  archiveprefix = {arXiv},
  primaryclass  = {stat.ME},
  howpublished  = {Preprint, arXiv:2305.11640},
  url           = {https://arxiv.org/abs/2305.11640}
}

@article{10.1214/23-AOS2276,
  author    = {Rina Foygel Barber and Emmanuel J. Cand{\`e}s and Aaditya Ramdas and Ryan J. Tibshirani},
  title     = {Conformal prediction beyond exchangeability},
  journal   = {The Annals of Statistics},
  volume    = {51},
  number    = {2},
  pages     = {816--845},
  publisher = {Institute of Mathematical Statistics},
  year      = {2023},
  doi       = {10.1214/23-AOS2276}
}

@article{JMLR:v24:22-1176,
  author  = {Ying Jin and Emmanuel J. Cand{\`e}s},
  title   = {Selection by Prediction with Conformal p-values},
  journal = {Journal of Machine Learning Research},
  volume  = {24},
  number  = {244},
  pages   = {1--41},
  year    = {2023}
}

@article{dobriban2025symmpi,
  author  = {Dobriban, Edgar and Yu, Mengxin},
  title   = {{SymmPI}: Predictive inference for data with group symmetries},
  journal = {Journal of the Royal Statistical Society Series B: Statistical Methodology},
  volume  = {87},
  number  = {5},
  pages   = {1353--1387},
  year    = {2025},
  doi     = {10.1093/jrsssb/qkaf022}
}

@article{10.1093/biomet/asaa079,
  author  = {Lei, Lihua and Bickel, Peter J.},
  title   = {An assumption-free exact test for fixed-design linear models with exchangeable errors},
  journal = {Biometrika},
  volume  = {108},
  number  = {2},
  pages   = {397--412},
  year    = {2021},
  doi     = {10.1093/biomet/asaa079}
}

@article{doi:10.1080/10618600.2017.1286243,
  author    = {Yunpeng Zhao and Yun-Jhong Wu and Elizaveta Levina and Ji Zhu},
  title     = {Link Prediction for Partially Observed Networks},
  journal   = {Journal of Computational and Graphical Statistics},
  volume    = {26},
  number    = {3},
  pages     = {725--733},
  publisher = {Taylor \& Francis},
  year      = {2017}
}

@article{10.1214/16-AOS1450,
  author    = {Victor Chernozhukov and Alfred Galichon and Marc Hallin and Marc Henry},
  title     = {Monge--Kantorovich depth, quantiles, ranks and signs},
  journal   = {The Annals of Statistics},
  volume    = {45},
  number    = {1},
  pages     = {223--256},
  publisher = {Institute of Mathematical Statistics},
  year      = {2017},
  doi       = {10.1214/16-AOS1450}
}

@article{10.1214/21-AOS2136,
  author    = {Promit Ghosal and Bodhisattva Sen},
  title     = {Multivariate ranks and quantiles using optimal transport: Consistency, rates and nonparametric testing},
  journal   = {The Annals of Statistics},
  volume    = {50},
  number    = {2},
  pages     = {1012--1037},
  publisher = {Institute of Mathematical Statistics},
  year      = {2022},
  doi       = {10.1214/21-AOS2136}
}

@inproceedings{gui2023conformalized,
  author    = {Gui, Yu and Barber, Rina Foygel and Ma, Cong},
  title     = {Conformalized matrix completion},
  booktitle = {Advances in Neural Information Processing Systems},
  volume    = {36},
  pages     = {4820--4844},
  publisher = {Curran Associates, Inc.},
  year      = {2023}
}

@inproceedings{huang2023uncertainty,
  author    = {Huang, Kexin and Jin, Ying and Cand{\`e}s, Emmanuel J. and Leskovec, Jure},
  title     = {Uncertainty quantification over graph with conformalized graph neural networks},
  booktitle = {Advances in Neural Information Processing Systems},
  volume    = {36},
  pages     = {26699--26721},
  publisher = {Curran Associates, Inc.},
  year      = {2023}
}

@inproceedings{Zargarbashi-gnn,
  author    = {Zargarbashi, Soroush H. and Antonelli, Simone and Bojchevski, Aleksandar},
  title     = {Conformal prediction sets for graph neural networks},
  booktitle = {Proceedings of the 40th International Conference on Machine Learning},
  series    = {Proceedings of Machine Learning Research},
  volume    = {202},
  pages     = {12292--12318},
  publisher = {PMLR},
  year      = {2023}
}

@article{lunde2023conformal,
  author    = {Robert Lunde and Elizaveta Levina and Ji Zhu},
  title     = {Conformal Prediction for Network-Assisted Regression},
  journal   = {Journal of the American Statistical Association},
  volume    = {120},
  number    = {551},
  pages     = {1633--1644},
  publisher = {Taylor \& Francis},
  year      = {2025},
  doi       = {10.1080/01621459.2025.2506198}
}

@article{doi:10.1080/01621459.2017.1341413,
  author    = {Harry Crane and Walter Dempsey},
  title     = {Edge Exchangeable Models for Interaction Networks},
  journal   = {Journal of the American Statistical Association},
  volume    = {113},
  number    = {523},
  pages     = {1311--1326},
  publisher = {Taylor \& Francis},
  year      = {2018}
}

@article{Chernozhukove2107794118,
  author       = {Chernozhukov, Victor and W{\"u}thrich, Kaspar and Zhu, Yinchu},
  title        = {Distributional conformal prediction},
  journal      = {Proceedings of the National Academy of Sciences},
  volume       = {118},
  number       = {48},
  publisher    = {National Academy of Sciences},
  year         = {2021},
  doi          = {10.1073/pnas.2107794118},
  elocation-id = {e2107794118}
}

@incollection{GRAHAM202023,
  author    = {Bryan S. Graham},
  title     = {Chapter 2 - {Dyadic Regression}},
  booktitle = {The Econometric Analysis of Network Data},
  pages     = {23--40},
  publisher = {Academic Press},
  year      = {2020},
  doi       = {10.1016/B978-0-12-811771-2.00008-0}
}

@book{vovk-algorithmic-world,
  author    = {Vladimir Vovk and Alexander Gammerman and Glenn Shafer},
  title     = {Algorithmic Learning in a Random World},
  publisher = {Springer},
  address   = {New York},
  year      = {2005}
}

@misc{angelopoulos2021gentle,
  author        = {Anastasios N. Angelopoulos and Stephen Bates},
  title         = {A Gentle Introduction to Conformal Prediction and Distribution-Free Uncertainty Quantification},
  year          = {2021},
  eprint        = {2107.07511},
  archiveprefix = {arXiv},
  primaryclass  = {cs.LG},
  howpublished  = {Preprint, arXiv:2107.07511},
  url           = {https://arxiv.org/abs/2107.07511}
}

@inproceedings{NEURIPS2020_244edd7e,
  author    = {Romano, Yaniv and Sesia, Matteo and Cand{\`e}s, Emmanuel J.},
  title     = {Classification with valid and adaptive coverage},
  booktitle = {Advances in Neural Information Processing Systems},
  volume    = {33},
  pages     = {3581--3591},
  publisher = {Curran Associates, Inc.},
  year      = {2020}
}

@article{commenges-transformations,
  author  = {Daniel Commenges},
  title   = {Transformations which preserve exchangeability and application to permutation tests},
  journal = {Journal of Nonparametric Statistics},
  volume  = {15},
  number  = {2},
  pages   = {171--185},
  year    = {2003}
}

@article{proteinlinkprediction,
  author  = {Yanjun Qi and Ziv Bar-Joseph and Judith Klein-Seetharaman},
  title   = {Evaluation of different biological data and computational classification methods for use in protein interaction prediction},
  journal = {Proteins},
  volume  = {63},
  number  = {3},
  pages   = {490--500},
  year    = {2006}
}

@article{drug-link-prediction,
  author  = {Abbas, Khushnood and Abbasi, Alireza and Dong, Shi and Niu, Ling and Yu, Laihang and Chen, Bolun and Cai, Shi-Min and Hasan, Qambar},
  title   = {Application of network link prediction in drug discovery},
  journal = {BMC Bioinformatics},
  volume  = {22},
  number  = {1},
  pages   = {187},
  year    = {2021},
  doi     = {10.1186/s12859-021-04082-y}
}

@article{10.1371/journal.pone.0154244,
  author    = {Berlusconi, Giulia and Calderoni, Francesco and Parolini, Nicola and Verani, Marco and Piccardi, Carlo},
  title     = {Link Prediction in Criminal Networks: A Tool for Criminal Intelligence Analysis},
  journal   = {PLOS ONE},
  volume    = {11},
  number    = {4},
  pages     = {1--21},
  publisher = {Public Library of Science},
  year      = {2016},
  doi       = {10.1371/journal.pone.0154244}
}

@inproceedings{NEURIPS2019_8fb21ee7,
  author    = {Tibshirani, Ryan J. and Foygel Barber, Rina and Cand{\`e}s, Emmanuel J. and Ramdas, Aaditya},
  title     = {Conformal prediction under covariate shift},
  booktitle = {Advances in Neural Information Processing Systems},
  volume    = {32},
  publisher = {Curran Associates, Inc.},
  year      = {2019}
}

@article{klopp-oracle-inequalities,
  author    = {Klopp, Olga and Tsybakov, Alexandre B. and Verzelen, Nicolas},
  title     = {Oracle inequalities for network models and sparse graphon estimation},
  journal   = {The Annals of Statistics},
  volume    = {45},
  number    = {1},
  pages     = {316--354},
  publisher = {Institute of Mathematical Statistics},
  year      = {2017}
}

@article{dean-verducci,
  author  = {Dean, Angela M. and Verducci, Joseph S.},
  title   = {Linear transformations that preserve majorization, {Schur} concavity, and exchangeability},
  journal = {Linear Algebra and its Applications},
  volume  = {127},
  pages   = {121--138},
  year    = {1990}
}

@misc{lunde2023validityconformalpredictionnetwork,
  author        = {Robert Lunde},
  title         = {On the Validity of Conformal Prediction for Network Data Under Non-Uniform Sampling},
  year          = {2023},
  eprint        = {2306.07252},
  archiveprefix = {arXiv},
  primaryclass  = {math.ST},
  howpublished  = {Preprint, arXiv:2306.07252},
  url           = {https://arxiv.org/abs/2306.07252}
}

@misc{kuchibhotla2021exchangeability,
  author        = {Arun Kumar Kuchibhotla},
  title         = {Exchangeability, Conformal Prediction, and Rank Tests},
  year          = {2021},
  eprint        = {2005.06095},
  archiveprefix = {arXiv},
  primaryclass  = {math.ST},
  howpublished  = {Preprint, arXiv:2005.06095},
  url           = {https://arxiv.org/abs/2005.06095}
}

@article{10.1214/20-AOS1981,
  author  = {Laurent Davezies and Xavier D'Haultf{\oe}uille and Yannick Guyonvarch},
  title   = {Empirical process results for exchangeable arrays},
  journal = {The Annals of Statistics},
  volume  = {49},
  number  = {2},
  pages   = {845--862},
  year    = {2021}
}

@article{hlavka2023multivariate,
  author  = {Hl{\'a}vka, Zden{\v e}k and Hlubinka, Daniel and Hudecov{\'a}, {\v S}{\'a}rka},
  title   = {Multivariate quantile-based permutation tests with application to functional data},
  journal = {Journal of Computational and Graphical Statistics},
  year    = {2025},
  doi     = {10.1080/10618600.2024.2444302},
  note    = {Published online; preprint at arXiv:2311.04017}
}

@article{10.1093/biomet/asae010,
  author  = {Bao, Yajie and Huo, Yuyang and Ren, Haojie and Zou, Changliang},
  title   = {Selective conformal inference with false coverage-statement rate control},
  journal = {Biometrika},
  volume  = {111},
  number  = {3},
  pages   = {727--742},
  year    = {2024}
}

@book{Fang1993NumbertheoreticMI,
  author    = {Fang, Kai-Tai and Wang, Yuan},
  title     = {Number-Theoretic Methods in Statistics},
  publisher = {Chapman \& Hall/CRC},
  address   = {London, UK},
  year      = {1994},
  isbn      = {978-0412465208}
}

@inproceedings{brody2022how,
  author    = {Shaked Brody and Uri Alon and Eran Yahav},
  title     = {How Attentive are Graph Attention Networks?},
  booktitle = {International Conference on Learning Representations},
  year      = {2022}
}

@article{d6cb28bb-5a7b-386d-8ced-b68d43954df3,
  author    = {Chatterjee, Sourav},
  title     = {Matrix estimation by universal singular value thresholding},
  journal   = {The Annals of Statistics},
  volume    = {43},
  number    = {1},
  pages     = {177--214},
  publisher = {Institute of Mathematical Statistics},
  year      = {2015}
}

@article{repec:oup:biomet:v:104:y:2017:i:4:p:771-783.,
  author  = {Yuan Zhang and Elizaveta Levina and Ji Zhu},
  title   = {Estimating network edge probabilities by neighbourhood smoothing},
  journal = {Biometrika},
  volume  = {104},
  number  = {4},
  pages   = {771--783},
  year    = {2017}
}

\appendix
\counterwithin{figure}{section}
\counterwithin{table}{section}
\section{Appendix}
\subsection{Proofs for Section \ref{sec-beyond-exchangeability}: Beyond Exchangeability
}
\label{sec:appendix-1}
Before we prove Proposition $ \ref{prop-super-uniform-transitive}$, we make an observation that simplifies arguments considerably.
\begin{proposition} 
\label{prop-lexicographic-order}
Let $(W_1, \ldots, W_n)$ be a vector of random variables and $\mathcal{G}$ be a collection of bijections $[n] \mapsto [n]$.  Suppose that $\gamma_1, \ldots, \gamma_n \sim \mathrm{Uniform}[0,1]$ are generated independently of all random variables and let $\widetilde{W}_i = (W_i,\gamma_i)$ for $i \in [n]$ and let $\widetilde{W} = (\widetilde{W}_1, \ldots, \widetilde{W}_n)$.  If
\begin{align*}
gW \stackrel{d}{=} W \ \forall \ g \in \mathcal{G}   
\end{align*}
then,
\begin{align*}
g \widetilde{W} \stackrel{d}{=} \widetilde{W}  \ \forall \ g \in \mathcal{G}.    
\end{align*}
Moreover, 
\begin{align*}
\frac{1}{n}\sum_{k=1}^n\mathbbm{1}(\widetilde{W}_k \succeq \widetilde{W}_i) \leq \frac{1}{n}\sum_{k=1}^n\mathbbm{1}(W_k \geq W_i), 
\end{align*}
where $\succeq$ corresponds to the lexicographic order.  
\end{proposition}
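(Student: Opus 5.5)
The proof has two parts: the invariance claim and the pointwise rank inequality. Neither needs any earlier result beyond the definitions.

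\emph{Invariance.} Fix $g \in \mathcal{G}$. Write $\gamma = (\gamma_1,\ldots,\gamma_n)$ and $g\gamma = (\gamma_{g(1)},\ldots,\gamma_{g(n)})$. Then $g\widetilde{W} = ((W_{g(1)},\gamma_{g(1)}),\ldots,(W_{g(n)},\gamma_{g(n)}))$ is a fixed measurable rearrangement of the pair $(gW, g\gamma)$. Likewise, $\widetilde{W}$ is the same rearrangement applied to $(W,\gamma)$. It therefore suffices to show $(gW, g\gamma) \stackrel{d}{=} (W,\gamma)$.

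Since $\gamma$ is independent of $W$, $g\gamma$ is a measurable function of $\gamma$ alone, so $g\gamma$ is independent of $gW$. Because $g$ is a bijection of $[n]$ and the $\gamma_i$ are i.i.d., $g\gamma \stackrel{d}{=} \gamma$. By hypothesis, $gW \stackrel{d}{=} W$. Two pairs with independent components and matching marginals have the same joint law (the product measure), which gives the claim. One can check this via characteristic functions or on product rectangles and then extend.

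\emph{Rank inequality.} It holds pointwise. For each $k$, I will show $\mathbbm{1}(\widetilde{W}_k \succeq \widetilde{W}_i) \le \mathbbm{1}(W_k \ge W_i)$ and then sum over $k$ and divide by $n$. The lexicographic order is defined by $(a,b) \succeq (c,d)$ iff $a > c$, or $a = c$ and $b \ge d$. In either case $a \ge c$. So $\widetilde{W}_k \succeq \widetilde{W}_i$ implies $W_k \ge W_i$, which is the indicator inequality.

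It is worth adding, as a remark, why this matters for later use. Almost surely the $\gamma_i$ are distinct, so $\succeq$ is a strict total order on the entries of $\widetilde{W}$ with no ties. Superuniformity of the tie-broken rank then implies superuniformity of the original rank, since the latter dominates pointwise.

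No step is a real obstacle. The only point requiring care is the joint-law argument in the invariance part. There one must use that $gW$ is a function of $W$ and $g\gamma$ is a function of $\gamma$, so independence is preserved; marginal equality of each coordinate block alone would not suffice.
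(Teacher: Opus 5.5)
Your proof is correct. The paper records this proposition as an observation without writing out a proof, and your two steps --- the product-measure identity $(gW, g\gamma) \stackrel{d}{=} (W,\gamma)$ obtained from independence of $\gamma$ and $W$, and the pointwise implication $\widetilde{W}_k \succeq \widetilde{W}_i \Rightarrow W_k \geq W_i$ --- are exactly the routine verification it leaves implicit.
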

As a consequence of Proposition \ref{prop-lexicographic-order}, we have, for any $t \in \mathbb{R}$, 
\begin{align*}
P\left(  \frac{1}{n}\sum_{k=1}^n\mathbbm{1}(W_k \geq W_i)\leq t\right) \leq P\left( \frac{1}{n}\sum_{k=1}^n\mathbbm{1}(\widetilde{W}_k \succeq \widetilde{W}_i)  \leq t\right) 
\end{align*}
where $\widetilde{W}_1, \ldots,\widetilde{W}_n$ are distinct almost surely. In what follows, let $R_i = \frac{1}{n} \sum_{k=1}^n \mathbbm{1}(W_k \geq W_i)$ and $\widetilde{R}_i = \frac{1}{n} \sum_{k=1}^n \mathbbm{1}(\widetilde{W}_k \succeq \widetilde{W}_i)$. \\ 

\noindent \textit{Proof of Proposition \ref{prop-super-uniform-transitive}}.
 By the invariance assumption, we have that, for any $t \in \mathbb{R}$ and any $g_j \in \mathcal{G}_0$,
\begin{align*}
P(R_i \leq t) = P\left( \frac{1}{n}\sum_{k=1}^n \mathbbm{1}(W_k \geq W_i) \leq t \right)
&= P\left(\frac{1}{n} \sum_{k=1}^n \mathbbm{1}(W_{g_j(k)} \geq W_{g_j(i)}) \leq t  \right)
\\ &= P\left(\frac{1}{n} \sum_{k=1}^n \mathbbm{1}(W_{k} \geq W_{j}) \leq t \right)
\\ &= P(R_j \leq t).
\end{align*}
Therefore, $R_1, \ldots, R_n$ are identically distributed.  Moreover, for any $t \in [0,1]$, 
\begin{align*}
P(R_i \leq t) &= \frac{1}{n} \sum_{j=1}^n P(R_j \leq t)
\\ &\leq \frac{1}{n} E\left[\sum_{j=1}^n\mathbbm{1}(\widetilde{R}_j \leq t) \right]
\\ &\stackrel{(i)}{=}\frac{1}{n} \cdot n \cdot \frac{\lfloor nt \rfloor}{n}
\\ & \leq t,
\end{align*}
where in $(i)$ we used the fact that since $\tilde{R}_i$ are distinct almost surely, there are exactly $n \cdot \frac{\lfloor nt \rfloor}{n}$ elements that are less than or equal to $t$ almost surely. \qed \\ \\ 

We now give two proofs of Theorem \ref{theorem-super-unform-be}.  

\begin{proof}[First proof of Theorem \ref{theorem-super-unform-be}]
We will argue in three main steps.  First, we will prove a statement that requires two conditions, and then show that condition (b) is implied for a transitive group action. Then, we will argue that the condition in this proposition is equivalent to the one imposed in Theorem \ref{theorem-super-unform-be}.   

\paragraph{Step 1: Bijection argument}
We start with a bijection argument that requires a transitivity condition along with an identifiability condition. 

\begin{proposition}[Superuniformity for Sampled Elements Under Distributional Invariance]
\label{prop-super-unform-be}
Suppose that $\mathcal{G}^{\mathcal{N}}$ satisfies (\refeq{eq-invariance-L}), and let $i \in \mathcal{N}$. Suppose that there exists a collection of functions
\begin{align*}
\mathcal{G}_0^{\mathcal{N}} \subseteq \langle \mathcal{G}^{\mathcal{N}} \rangle   
\end{align*} 
such that:
\begin{enumerate}[label=(\alph*)]
\item
\label{eq:a}
$g_j(j) = i$
\item
\label{eq:b}
$g_j(i) \neq g_k(i)$ for $ j \neq k$.
\end{enumerate}
Then, for any $t \in [0,1],$ 
\begin{align*}
P\left( \frac{1}{|\mathcal{K}|} \sum_{k \in \mathcal{K}} \mathbbm{1}(W_k \geq W_i) \leq t \ \biggr\rvert \ i \in \mathcal{K}  \right) \leq t. 
\end{align*}
\end{proposition}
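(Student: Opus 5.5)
The plan is to reduce the conditional statement to a comparison of joint probabilities $P(j \in \mathcal{K},\, R_j \le t)$ across $j \in \mathcal{N}$. Here $R_j = |\mathcal{K}|^{-1}\sum_{k\in\mathcal{K}}\mathbbm{1}(W_k \ge W_j)$ is the sampled rank, and $L = (W_k, \mathbbm{1}(k\in\mathcal{K}))_{k\in\mathcal{N}}$ is the joint array of scores and inclusion indicators on which the invariance (\ref{eq-invariance-L}) is imposed. We then show these joint probabilities are all equal, and average over $j$, as in the proof of Proposition \ref{prop-super-uniform-transitive}.

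\textbf{Step 1: tie-breaking.} We first remove ties. Append i.i.d.\ $\mathrm{Uniform}[0,1]$ variables $\gamma_k$, independent of everything, to each coordinate of $L$. Independence preserves (\ref{eq-invariance-L}) for the augmented array. Exactly as in Proposition \ref{prop-lexicographic-order}, the lexicographic rank $\widetilde{R}_i$ within $\mathcal{K}$ satisfies $\widetilde{R}_i \le R_i$. Hence it suffices to prove the claim for $\widetilde{R}_i$, and the $\widetilde{W}_k$, $k\in\mathcal{K}$, are a.s.\ distinct.

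\textbf{Step 2: transporting index $i$ to index $j$.} Fix $j \in \mathcal{N}$ and take $g_j \in \mathcal{G}_0^{\mathcal{N}}$ with $g_j(j)=i$. By Lemma \ref{lemma-invariance-group}, $\langle\mathcal{G}^{\mathcal{N}}\rangle$ is a group preserving the law of $L$, so $h = g_j^{-1}$ also satisfies $hL \stackrel{d}{=} L$. Define $L' = hL$, so that $L'_k = L_{g_j^{-1}(k)}$. In $L'$ the sampled set is $\mathcal{K}' = g_j(\mathcal{K})$, so $i \in \mathcal{K}'$ iff $j \in \mathcal{K}$. Because $h$ is a bijection of $\mathcal{N}$ that merely relabels coordinates, the rank of coordinate $i$ of $L'$ within $\mathcal{K}'$ equals the rank of $W_j$ within $\mathcal{K}$. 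The event $\{i\in\mathcal{K},\ \widetilde{R}_i\le t\}$ is a measurable function of $L$. Applying that function to $L'$ and using $L' \stackrel{d}{=} L$ gives
\[
P(i\in\mathcal{K},\ \widetilde{R}_i \le t) = P(j\in\mathcal{K},\ \widetilde{R}_j\le t), \qquad P(i\in\mathcal{K}) = P(j \in \mathcal{K}).
\]
The main point to check carefully is this bookkeeping: the rank statistic must be computed only over the transformed sample $\mathcal{K}'$, which is why the inclusion indicators have to be carried inside $L$. This is the step where observations can ``leave the sample'', and the equivariance of the sampled rank handles it. Condition (b) guarantees that the maps $g_j$ are distinct, so the correspondence between the events for different $j$ is a genuine bijection. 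The equalities above only use (a), but (b) is what one would need for a version of the argument that avoids rank equivariance.

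\textbf{Step 3: averaging.} Sum the first identity over $j\in\mathcal{N}$:
\[
|\mathcal{N}|\, P(i\in\mathcal{K},\ \widetilde{R}_i\le t) = E\Big[\#\{j\in\mathcal{K}: \widetilde{R}_j\le t\}\Big] = E\big[\lfloor |\mathcal{K}|\,t\rfloor\big] \le t\, E|\mathcal{K}|.
\]
The middle equality holds because the sampled ranks are a.s.\ the distinct values $1/|\mathcal{K}|, \dots, 1$. Summing the second identity gives $|\mathcal{N}|\,P(i\in\mathcal{K}) = E|\mathcal{K}|$. Dividing the two yields $P(\widetilde{R}_i\le t \mid i\in\mathcal{K})\le t$, and Step 1 transfers this bound to $R_i$.
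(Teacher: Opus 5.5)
Your proof is correct, but it does not follow the paper's proof of this proposition. It is essentially the paper's \emph{second} proof of Theorem \ref{theorem-super-unform-be}.

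\textbf{The paper's route.} After tie-breaking, the paper fixes $c=|\mathcal{K}|$ and a rank position $s$. It then builds a map $\Psi$ from orderings $f$ of candidate sample sets with $f(s)=i$ to orderings with $f(s+1)=i$, defined by $\Psi f = g_{f(s+1)}\circ f$. It splits $\mathcal{E}^{(s+1,c)}$ according to $j=f(s+1)$ and applies $g_jL \stackrel{d}{=} L$ term by term. This gives $P(\mathcal{E}^{(s,c)})=P(\mathcal{E}^{(s+1,c)})$, so the rank is exactly uniform on $\{1,\dots,c\}$ given $i\in\mathcal{K}$ and $|\mathcal{K}|=c$. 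Condition (b) is what makes $\Psi$ a bijection: surjectivity comes from pigeonhole on $j\mapsto g_j(i)$, and injectivity holds because $g_j(i)$ determines $j$.

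\textbf{Your route.} You instead equate the joint probabilities $P(j\in\mathcal{K},\widetilde{R}_j\le t)$ across $j$, using the equivariance of the sampled rank under relabeling. You then use the deterministic count $\sum_j \mathbbm{1}(j\in\mathcal{K},\widetilde{R}_j\le t)=\lfloor |\mathcal{K}|t\rfloor$. (This step implicitly uses $\mathcal{K}\subseteq\mathcal{N}$ a.s., which holds by the definition of $\mathcal{N}$.)

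\textbf{What each buys.} Your argument needs only (a), i.e.\ transitivity. So (b) becomes unnecessary, and so does the orbital/suborbital machinery the paper uses in its first proof to extract a subcollection satisfying (b). Intersecting with the relabeling-invariant event $\{|\mathcal{K}|=c\}$ also recovers the exact conditional uniformity that the bijection argument yields. The paper's route, as the authors note, works directly with orderings and sample sets and never needs the rank statistic to be an equivariant functional of $L$. That may matter for generalizations.

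\textbf{A small correction to your commentary.} Condition (a) alone already forces $g_j\neq g_k$ for $j\neq k$, since $g_j(j)=i\neq g_k(j)$. What (b) adds is distinctness of the images $g_j(i)$, which is exactly what the shift map $\Psi$ requires.
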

\begin{proof}

Under the stated conditions, we will first show that:
\begin{align}
\label{eq-rank-uniform}
\mathrm{rank}(\widetilde{W}_i, \{\widetilde{W}_k\}_{k \in \mathcal{K}}) \ \bigr\rvert \ i \in \mathcal{K}, \ |\mathcal{K}| = c  \sim \mathrm{Uniform}\{1, \ldots, c\}.
\end{align}

The case where $c \leq 1$ is trivial, so we focus on the case where $c >1$.  It suffices to show, for each $s \in \{1, \ldots, c-1\}$, 
\begin{align*}
& \ P(\{\mathrm{rank}(\widetilde{W}_i, \{\widetilde{W}_k\}_{k \in \mathcal{K}}) = s\} \cap  \{ \zeta_i =1 \} \cap \{|\mathcal{K}| = c\})  \\ 
= & \ P(\{\mathrm{rank}(\widetilde{W}_i, \{\widetilde{W}_k\}_{k \in \mathcal{K}}) = s+1\}  \cap  \{ \zeta_i =1 \} \cap \{|\mathcal{K}| = c \}). 
\end{align*}

To this end, for each $K \subseteq \mathcal{N}$ satisfying $|K| =c$ and $i \in K$, let $\mathcal{F}_s(K)$ be a collection of bijections $f:[c] \mapsto K$ ordering elements of $K$ such that $f(s) = i$. Let:
\begin{align*}
\mathcal{E}^{(s,c)} = \bigcup_{\substack{K: \ i \in K, \\ |K| = c}} \  \bigcup_{f \in \mathcal{F}_s(K)} \left\{\widetilde{W}_{f(1)} \prec \cdots \prec \widetilde{W}_{f(c)} \right\} \cap \{\mathcal{K} = K \}.
\end{align*}
Note that the sets in the union are disjoint.  Moreover, since the points are distinct almost surely, ties in the lexicographic order occur with probability $0$.

Define the collection:
\begin{align*}
\mathcal{F}_{s,c} = \bigcup_{\substack{K: \ i \in K, \\ |K| = c}} \mathcal{F}_s(K).
\end{align*}
We will construct a bijection: $\Psi: \mathcal{F}_{s,c}  \mapsto \mathcal{F}_{s+1,c}$ using elements of $\mathcal{G}^\mathcal{N}$, which will allow us to establish the desired property. 
To this end, suppose that $ \psi = \Psi f :[c] \mapsto \mathcal{N}$,  satisfies:
\begin{align*}
\psi(t) = \sum_{j \in \mathcal{N}}g_j(f(t)) \mathbbm{1}(f(s+1) = j).
\end{align*}
Observe that $\psi$ maps $f$ to values in $ K^{f} =  \{g_j(k) \ | \ j= f(s+1),  k \in K  \}$, where $K^f$ satisfies $|K^f| = c$ and $i = g_j(j) \in K^f$.  It is also clear that $\psi(s+1) = i$.
We now argue that it is onto.  Suppose $\psi \in \mathcal{F}_{s+1,c}$ and let $k = \psi(s)$. By assumption (b) and the pigeonhole principle (both domain and codomain of $j \mapsto g_j(i)$ are $\mathcal{N} \setminus \{i\}$), there exists $j \in \mathcal{N}$ such that $g_j(i) = k$.  Then, if we choose $f$ given by $f(t) = g_j^{-1}(\psi(t))$, then $f(s) = g_j^{-1}(k) = i$; therefore, $f \in \mathcal{F}_{s,c}$.  Moreover, $f(s+1) = g_j^{-1}(i) = j$; Therefore, $\Psi f = \psi$.  

Now, to show that it is one-to-one, suppose that $\psi_1 = \Psi f_1$, $\psi_2 = \Psi f_2$, and $\psi_1 = \psi_2$.  Then, it must be the case that $\psi_1(s) =  g_j(i)$, $\psi_2(s) =  g_k(i)$ for some $j,k \in \mathcal{N}$ and $\psi_1(s) = \psi_2(s)$.  Now, by assumption (b), $g_j(i) =g_k(i)$ implies $j=k$.  Furthermore, since $g_j$ is a bijection, it must also be the case that $f_1 = f_2$. Therefore, it follows that,
\begin{align*}
 & P\left(\mathcal{E}^{(s+1,c)} \right)
 \\ = \ &  P\left( \bigcup_{ h \in \mathcal{F}_{s+1,c}} \left[\left\{ \widetilde{W}_{h(1)} \prec \ldots \prec \widetilde{W}_{h(c) } \right\} \cap \biggl\{\mathcal{K} = \left\{h(1), \ldots, h(c) \right\}\biggr\}\right] \right)
  \\ = \  & P\left( \bigcup_{f \in \mathcal{F}_{s,c}} \left[\left\{ \widetilde{W}_{(\Psi f)(1)} \prec \ldots \prec \widetilde{W}_{(\Psi f)(c) } \right\} \cap \biggl\{\mathcal{K} = \left\{(\Psi f)(1), \ldots, (\Psi f)(c) \right\}\biggr\}\right] \right)
  \\ = \ &  \sum_{j \in \mathcal{N} \setminus \{i\}} \ \sum_{\substack{f \in \mathcal{F}_{s,c} \ | \  f(s+1)=j}} P\left( \left\{ \widetilde{W}_{g_j(f(1))} \prec \ldots \prec \widetilde{W}_{g_j(f(c)) } \right\} \cap \biggl\{\mathcal{K} =  \{ (g_j(f(1)),\ldots g_j(f(c))\} \biggr\} \right)
  \\ = \ &  P\left( \bigcup_{f \in \mathcal{F}_{s,c}} \left\{ \widetilde{W}_{f(1)} \prec \ldots \prec \widetilde{W}_{f(c) } \right\} \cap \biggl\{ \zeta_k =1 \  \forall \  k \in \{f(1), \ldots, f(c)\} \biggr\} \cap \biggl\{ \zeta_k =0 \  \forall \  k \not\in \{f(1), \ldots, f(c)\} \biggr\} \right) 
  \\ = \ & P\left(\mathcal{E}^{(s,c)} \right).
  \end{align*}

Thus, since $s \in \{1, \ldots, c-1\}$ was arbitrary, 
$P(\mathcal{E}^{(s,c)}) = P(\mathcal{E}^{(s+1,c)})$ for all $1 \leq s \leq c-1$, which establishes that 
$\mathrm{rank}(\widetilde{W}_i, \{\widetilde{W}_k\}_{k \in \mathcal{K}})$ 
is uniform on $\{1, \ldots, c\}$ conditional on 
$i \in \mathcal{K}$ and $|\mathcal{K}| = c$. 
Since $c \geq 1$ was also arbitrary, 
(\refeq{eq-rank-uniform}) follows.  Now,
  \begin{align*}
  & P\left( \frac{1}{|\mathcal{K}|} \sum_{k \in \mathcal{K}} \mathbbm{1}(W_k \geq W_i) \leq t \ \biggr\rvert \ i \in \mathcal{K}  \right)  \\ 
  \leq & \  \sum_{c=1}^n P\left( \frac{1}{|\mathcal{K}|} \sum_{k \in \mathcal{K}} \mathbbm{1}(\widetilde{W}_k \succeq \widetilde{W}_i) \leq t \ \biggr\rvert \ i \in \mathcal{K}, |\mathcal{K}| =c  \right) P\left(|\mathcal{K}| =c \ | \  i \in \mathcal{K} \right) \\
   \leq & \ t.
  \end{align*}
The claim follows.
\end{proof}

\paragraph{Step 2: Group Theory Arguments to Remove Condition (b)}
Our goal in this step is, given \ref{eq:a}, find a collection of $g_j \in \langle \mathcal{G}^{\mathcal{N}}\rangle$ for which $g_j(i)$ maps to distinct points.  We first prove Lemma \ref{lemma-invariance-group}, which establishes the basic property that any collection of invariances induces a group.  In what follows, for notational convenience, we will often use the notation $H = \langle \mathcal{G}^\mathcal{N}\rangle$.   

\begin{proof}[Proof of Lemma \ref{lemma-invariance-group}]
 For a bijection $h$ of $\mathcal{N}$, let $\varphi_h$ denote the map
$(x_k)_{k \in \mathcal{N}} \mapsto (x_{h(k)})_{k \in \mathcal{N}}$, so that
$hL = \varphi_h(L)$ and $\varphi_h \circ \varphi_g = \varphi_{g \circ h}$.

If $gL \stackrel{d}{=} L$ and
$hL \stackrel{d}{=} L$, then applying the map $\varphi_h$ to
both sides of $\varphi_g(L) \stackrel{d}{=} L$ gives
$(g \circ h) L \stackrel{d}{=} hL \stackrel{d}{=} L$. Thus, compositions also preserve invariance.  
 
For inversion, let $g$ be a bijection of $\mathcal{N}$ with
$gL \stackrel{d}{=} L$.  Applying the map
$\varphi_{g^{-1}}$ to both sides of $\varphi_g(L) \stackrel{d}{=} L$ gives
\begin{align*}
\varphi_{g^{-1}}\bigl( \varphi_g(L) \bigr) \stackrel{d}{=}
\varphi_{g^{-1}}(L),
\end{align*}
since equality in distribution is preserved by any measurable
transformation.  The left-hand side equals
$\varphi_{g \circ g^{-1}}(L) = \varphi_{\mathrm{id}}(L) = L$, and the
right-hand side is $g^{-1} L$.  Hence $g^{-1} L \stackrel{d}{=} L$.


That $H$ is a group is immediate from its definition: it contains the
identity, is closed under composition, and closed
under inversion. Moreover, each generator $h_\ell$ preserves the law of $L$,
either by (\refeq{eq-invariance-L}) or by the inversion fact above, so
induction on $m$ using the composition fact gives
$hL \stackrel{d}{=} L$ for every $h \in H$.

\end{proof}

 
We now introduce some group theory needed to state the construction of a collection of functions that satisfies condition (b).  These definitions may be found in standard textbooks on permutation groups such as \citet{cameron1999permutation}, although the construction considered here is not standard.  

Define a relation on $\mathcal{N} \times \mathcal{N}$ by
\begin{align*}
(x,y) \sim (x',y') \quad \Longleftrightarrow \quad
h(x) = x' \ \text{ and } \ h(y) = y' \ \text{ for some } h \in H .
\end{align*}
Since $H$ is a group by Lemma \ref{lemma-invariance-group}, $\sim$ is an
equivalence relation: reflexivity follows from the identity, symmetry from
closure under inversion, and transitivity from closure under composition.  We
refer to these equivalence classes as orbitals, which are orbits of the action $h \cdot (x,y) = (h(x), h(y)) \in \mathcal{N}^2$. We write $\Lambda$ for a
generic orbital.  For an orbital $\Lambda$ and $x \in \mathcal{N}$, define the suborbital:
\begin{align*}
\Gamma_\Lambda(x) = \{ k \in \mathcal{N} : (x,k) \in \Lambda \} .
\end{align*}


Let $\Lambda^{*} = \{ (y,x) : (x,y) \in \Lambda \}$ denote the transposed
orbital.  It is clear that transposed orbitals also form an equivalence class (i.e. are also orbitals), and we have the pairing $(x,y) \in \Lambda \iff (y,x) \in \Lambda^*$.  It is also immediate that $(\Lambda^*)^* = \Lambda$. 

In what follows, for $j \in \cN \setminus \{i\}$ let
\begin{align*}
  S_j = \{ h(i) : h \in H, \ h(j) = i \}.
\end{align*}
We establish some properties that will allow us to construct a function that satisfies $(a)$ and $(b)$.   The lemma we prove below establishes a connection between the class of invariances that satisfy $(a)$ for a particular choice of $j$ and a transposed orbital. 
\begin{lemma}
  \label{lem:Sj}
  Let $j \in \cN \setminus \{i\}$ and let $\Lambda$ be the orbital containing
  $(i,j)$.  Then $S_j = \Gamma_{\Lambda^{*}}(i)$.
\end{lemma}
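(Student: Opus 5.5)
The plan is to prove the set equality $S_j = \Gamma_{\Lambda^{*}}(i)$ by double inclusion, using only the definitions of orbitals and suborbitals together with the fact from Lemma \ref{lemma-invariance-group} that $H = \langle \mathcal{G}^{\mathcal{N}} \rangle$ is a group. Since $\Lambda$ is the orbital of $(i,j)$, the transposed orbital $\Lambda^{*}$ is the orbit of $(j,i)$ under the diagonal action $h\cdot(x,y) = (h(x),h(y))$. Unwinding the definition, $\Gamma_{\Lambda^*}(i) = \{k \in \mathcal{N} : (i,k) \in \Lambda^*\}$, and $(i,k)\in\Lambda^*$ holds exactly when $(i,k) \sim (j,i)$, that is, when some $h \in H$ satisfies $h(j) = i$ and $h(i) = k$.

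For the inclusion $S_j \subseteq \Gamma_{\Lambda^*}(i)$, I take $k \in S_j$. Then $k = h(i)$ for some $h \in H$ with $h(j) = i$. Applying $h$ to the pair $(j,i)$ gives $(h(j),h(i)) = (i,k)$. Hence $(i,k)$ lies in the orbit of $(j,i)$, namely $\Lambda^*$, so $k \in \Gamma_{\Lambda^*}(i)$.

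For the reverse inclusion, I take $k$ with $(i,k) \in \Lambda^*$. Since $\Lambda^*$ is the equivalence class of $(j,i)$, the definition of $\sim$ gives $(j,i)\sim(i,k)$, so some $h\in H$ satisfies $h(j)=i$ and $h(i)=k$. This says exactly that $k \in S_j$.

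The only point needing care, and the likely obstacle, is justifying that $\Lambda^*$ really is the orbital containing $(j,i)$, i.e. that transposing an orbit yields an orbit. I will check this directly: $(x,y)\sim(x',y')$ via $h$ if and only if $(y,x)\sim(y',x')$ via the same $h$. Hence transposition maps equivalence classes onto equivalence classes, and $(i,j)\in\Lambda$ gives $(j,i)\in\Lambda^*$. Any needed symmetry of $\sim$ (using $h^{-1}$) comes from $H$ being a group, as already noted before the lemma.
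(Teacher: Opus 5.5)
Your proposal is correct and follows the paper's proof essentially verbatim: identify $\Lambda^{*}$ as the orbital of $(j,i)$, then show both inclusions by noting that $h(j)=i$, $h(i)=k$ is exactly $h\cdot(j,i)=(i,k)$. Your explicit check that transposition maps orbitals to orbitals (the same $h$ witnesses both relations) supplies a detail the paper only asserts as clear.
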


\begin{proof}
  Let $\Delta$ be the orbital containing $(j,i)$; by definition of
  transposition, $\Delta = \Lambda^{*}$.

  If $h \in H$ satisfies $h(j) = i$ and $h(i) = k$, then
  $h \cdot (j,i) = (i,k)$, so $(i,k) \in \Delta$, i.e.\
  $k \in \Gamma_{\Delta}(i)$.  Conversely, if $k \in \Gamma_{\Delta}(i)$ then
  $(i,k) \in \Delta$ and $(j,i) \in \Delta$ lie in a common orbit, so there
  is $h \in H$ with $h \cdot (j,i) = (i,k)$, that is $h(j) = i$ and
  $h(i) = k$; hence $k \in S_j$.  Therefore
  $S_j = \Gamma_{\Delta}(i) = \Gamma_{\Lambda^{*}}(i)$.
\end{proof}

We now establish a lemma that will justify the existence of a bijection between $\Gamma_{\Lambda}(i)$ and $\Gamma_{\Lambda^*}(i)$.  This bijection will be used to construct a map from $\mathcal{N} \setminus \{i\}$ to itself that respects the constraint $(a)$. In our application, the output of the function may be interpreted as the value of $g_j(i)$; if it is a bijection, then each function maps to a unique point, thereby satisfying $(b)$.    
\begin{lemma}
  \label{lem:sizes}
  Let $H$ be a transitive group of bijections of the finite set $\cN$. For every orbital $\Lambda$ and $x,y \in \mathcal{N}$, $|\Gamma_\Lambda(x)| = |\Gamma_{\Lambda^{*}}(y)|$.
\end{lemma}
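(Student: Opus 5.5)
The plan is a double-counting argument on the orbital $\Lambda$ itself. Fix an orbital $\Lambda \subseteq \mathcal{N}^2$ and count its elements in two ways. Grouping by first coordinate gives
\begin{align*}
|\Lambda| = \sum_{x \in \mathcal{N}} |\Gamma_\Lambda(x)|,
\end{align*}
and grouping by second coordinate gives
\begin{align*}
|\Lambda| = \sum_{y \in \mathcal{N}} |\{x : (x,y) \in \Lambda\}| = \sum_{y \in \mathcal{N}} |\Gamma_{\Lambda^*}(y)|.
\end{align*}
The second identity uses that $(x,y) \in \Lambda$ if and only if $(y,x) \in \Lambda^*$, which was noted just before Lemma \ref{lem:Sj}.

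The next step is to show that, under transitivity, each summand is constant in its index. Take $x, x' \in \mathcal{N}$; by transitivity there is $h \in H$ with $h(x) = x'$. The map $k \mapsto h(k)$ then sends $\Gamma_\Lambda(x)$ injectively into $\Gamma_\Lambda(x')$. Indeed, if $(x,k) \in \Lambda$, then $h\cdot(x,k) = (x',h(k))$ lies in the same orbit, so it lies in $\Lambda$, since orbitals are $H$-orbits of the diagonal action. Applying the same reasoning with $h^{-1}$, which lies in $H$ by Lemma \ref{lemma-invariance-group}, gives the reverse injection. Hence $|\Gamma_\Lambda(x)| = |\Gamma_\Lambda(x')|$, and this common value is some $a_\Lambda$. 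By the identical argument applied to the orbital $\Lambda^*$, the sizes $|\Gamma_{\Lambda^*}(y)| = b_\Lambda$ do not depend on $y$.

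Combining the two counts gives $|\mathcal{N}|\, a_\Lambda = |\Lambda| = |\mathcal{N}|\, b_\Lambda$. Since $\mathcal{N}$ is finite and nonempty, $a_\Lambda = b_\Lambda$, which is the claimed equality $|\Gamma_\Lambda(x)| = |\Gamma_{\Lambda^*}(y)|$ for all $x, y$.

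There is no real obstacle here. The only points needing care are confirming that the diagonal action preserves each orbital, which holds because orbitals are defined as orbits of that action, and that $\Lambda^*$ is itself an orbital. Both facts were already recorded in the text preceding the lemma.
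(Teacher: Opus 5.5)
Your proof is correct and follows the paper's argument: transitivity makes $|\Gamma_\Lambda(x)|$ independent of $x$ (and likewise for $\Lambda^{*}$), and then $|\Lambda|$ is double counted and divided by $|\mathcal{N}|$. Counting $\Lambda$ by second coordinate is the same step as the paper's use of the transposition bijection $|\Lambda| = |\Lambda^{*}|$.
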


\begin{proof}
 Given $x, x' \in \cN$, transitivity provides $h \in H$ with
  $h(x) = x'$.  If $y \in \Gamma_{\Lambda}(x)$ then $(x,y) \in \Lambda$, so
  $h \cdot (x,y) = (x', h(y)) \in \Lambda$ and $h(y) \in \Gamma_{\Lambda}(x')$.  Thus
  $y \mapsto h(y)$ maps $\Gamma_{\Lambda}(x)$ into $\Gamma_{\Lambda}(x')$, and applying the
  same argument to $h^{-1}$ gives a map $\Gamma_{\Lambda}(x') \to \Gamma_{\Lambda}(x)$ inverse
  to it.  Hence, $|\Gamma_{\Lambda}(x)| = |\Gamma_{\Lambda}(x')|$.

  Since $\Lambda$ is the disjoint union over $x \in \cN$ of
  $\{x\} \times \Gamma_{\Lambda}(x)$, it follows that
  $|\Lambda| = |\cN| \cdot |\Gamma_\Lambda(x)|$, and likewise
  $|\Lambda^{*}| = |\cN| \cdot |\Gamma_{\Lambda^{*}}(y)|$.  Transposition is a
  bijection $\Lambda \to \Lambda^{*}$, so $|\Lambda| = |\Lambda^{*}|$, and
  dividing by $|\cN|$ gives the claim.
\end{proof}

We now establish the existence of the bijection that implies $(b)$ holds from condition $(a)$ alone. 
\begin{lemma}
  \label{lem:sdr}
  Let $H$ be a transitive group of bijections of the finite set $\cN$ and let
  $i \in \cN$.  Then there is a bijection
  $\phi : \cN \setminus \{i\} \to \cN \setminus \{i\}$ with
  $\phi(j) \in S_j$ for every $j$.
\end{lemma}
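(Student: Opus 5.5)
The plan is to build $\phi$ orbital by orbital. Orbitals of $H$ partition $\cN\times\cN$, so the sets $\Gamma_\Lambda(i)$, as $\Lambda$ ranges over all orbitals, partition $\cN$. The diagonal orbital $\Lambda_0=\{(x,x)\}$ is a single orbital because $H$ is transitive, and $\Gamma_{\Lambda_0}(i)=\{i\}$. Every other orbital $\Lambda$ satisfies $\Gamma_\Lambda(i)\subseteq \cN\setminus\{i\}$. Hence
\[
\cN\setminus\{i\}=\bigsqcup_{\Lambda\neq\Lambda_0}\Gamma_\Lambda(i).
\]
Transposition $\Lambda\mapsto\Lambda^*$ is an involution on the set of non-diagonal orbitals, since $\Lambda_0^*=\Lambda_0$. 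Therefore $\Lambda\mapsto\Gamma_{\Lambda^*}(i)$ also runs over exactly the same blocks of this partition, just permuted.

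For each non-diagonal orbital $\Lambda$, Lemma \ref{lem:sizes} (with $x=y=i$) gives $|\Gamma_\Lambda(i)|=|\Gamma_{\Lambda^*}(i)|$. So I fix, for each such $\Lambda$, an arbitrary bijection
\[
\phi_\Lambda:\Gamma_\Lambda(i)\to\Gamma_{\Lambda^*}(i).
\]
I then define $\phi$ on $\cN\setminus\{i\}$ by $\phi(j)=\phi_\Lambda(j)$, where $\Lambda$ is the unique orbital containing $(i,j)$, i.e.\ the one with $j\in\Gamma_\Lambda(i)$.

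\textbf{Checking the constraint $\phi(j)\in S_j$.} By Lemma \ref{lem:Sj}, $S_j=\Gamma_{\Lambda^*}(i)$ for this same $\Lambda$, and $\phi_\Lambda(j)$ lies in $\Gamma_{\Lambda^*}(i)$ by construction.

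\textbf{Checking that $\phi$ is a bijection.} The domain blocks $\Gamma_\Lambda(i)$ partition $\cN\setminus\{i\}$. The image blocks $\Gamma_{\Lambda^*}(i)$ also partition $\cN\setminus\{i\}$, because $\Lambda\mapsto\Lambda^*$ is a permutation of the non-diagonal orbitals. A disjoint union of bijections between corresponding blocks of two partitions of the same set is a bijection, so $\phi$ is a bijection of $\cN\setminus\{i\}$.

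The only point needing care, and the main potential obstacle, is confirming that transposition does not send a non-diagonal orbital to the diagonal and is a bijection on orbitals. Transposition preserves the property $x\neq y$ and satisfies $(\Lambda^*)^*=\Lambda$, so both facts hold. Equal cardinalities between paired blocks come for free from Lemma \ref{lem:sizes}, so no Hall-type matching argument is required.
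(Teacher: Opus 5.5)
Your proof is correct and matches the paper's argument: you partition $\cN\setminus\{i\}$ into the suborbitals $\Gamma_\Lambda(i)$ over off-diagonal orbitals, match each block to $\Gamma_{\Lambda^*}(i)=S_j$ using Lemma \ref{lem:Sj} and the cardinality equality from Lemma \ref{lem:sizes}, and glue the blockwise bijections together. The only cosmetic difference is that you get bijectivity directly, because the image blocks also form a partition, whereas the paper proves injectivity and then invokes finiteness.
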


\begin{proof}
  By Lemma \ref{lem:Sj}, for any orbital $\Lambda$, every $j \in \Gamma_\Lambda(i)$ has the same admissible
  set $\Gamma_{\Lambda^{*}}(i) = S_j$, and by Lemma \ref{lem:sizes} these two sets have the same cardinality.  Choose any bijection
  $\phi_\Lambda : \Gamma_\Lambda(i) \to \Gamma_{\Lambda^{*}}(i)$ and define
  $\phi(j) = \phi_\Lambda(j)$ for $j \in \Gamma_\Lambda(i)$.

   The sets $\Gamma_\Lambda(i)$ over the equivalence classes of off-diagonal $\Lambda$ partition $\cN \setminus \{i\}$ (it can be readily verified that the diagonal orbital leads to $\Gamma_{\Lambda}(i) = \{i\}$), so $\phi$ is
  well defined on $\cN \setminus \{i\}$, and $\phi(j) \in S_j$ by
  construction.  For injectivity, note that as $\Lambda$ ranges over the
  off-diagonal orbitals so does $\Lambda^{*}$. Since
  each $\phi_\Lambda$ is injective and the images are disjoint, $\phi$ is
  injective, and being an injection of a finite set to itself, a bijection.
\end{proof}

\paragraph{Step 3: Putting the Pieces Together}
Now, we invoke arguments developed in Step 2 to argue that condition \ref{eq:b} can be dropped.  
\begin{proposition}[Equivalence of Conditions]
Let $\mathcal{G}^\mathcal{N}$ be a collection of bijections $\mathcal{N} \mapsto \mathcal{N}$. Then, the existence of $\mathcal{G}_0^{\mathcal{N}} \subseteq \langle \mathcal{G}^{\mathcal{N}}   \rangle$ satisfying conditions \ref{eq:a} and \ref{eq:b} of Proposition \ref{prop-super-unform-be}  for some $i \in \mathcal{N}$ is equivalent to the existence of $\mathcal{G}_0^{\mathcal{N}} \subseteq \langle \mathcal{G}^{\mathcal{N}}   \rangle $ satisfying:
\begin{enumerate}[label=(\alph*)]
\setcounter{enumi}{2} 
    \item \label{eq:c} $g_j(i) =j$  for some $i \in \mathcal{N}$ and each $j \in \mathcal{N} \setminus \{i\}$.   
\end{enumerate} 

\end{proposition}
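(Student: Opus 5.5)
We prove the two directions separately, working throughout in the group $H = \langle \mathcal{G}^{\mathcal{N}} \rangle$ from Lemma \ref{lemma-invariance-group}.

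\emph{From (a) and (b) to (c).} Suppose $\{g_j\}_{j \in \mathcal{N}\setminus\{i\}} \subseteq H$ satisfies \ref{eq:a} and \ref{eq:b}.

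1. By \ref{eq:a}, $g_j(j) = i$. Since $g_j$ is a bijection, $g_j(i) \neq i$ whenever $j \neq i$.
2. By \ref{eq:b}, the map $j \mapsto g_j(i)$ is an injection of the finite set $\mathcal{N}\setminus\{i\}$ into itself. It is therefore a bijection. This is the pigeonhole argument already used in the proof of Proposition \ref{prop-super-unform-be}.
3. Hence, for each $j \in \mathcal{N}\setminus\{i\}$ there is a unique $k(j)$ with $g_{k(j)}(i) = j$.
4. Set $\tilde g_j = g_{k(j)}$. Then $\tilde g_j(i) = j$, so \ref{eq:c} holds with the same $i$.

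Alternatively, $g_j^{-1} \in H$ also satisfies $g_j^{-1}(i) = j$, by \ref{eq:a} alone.

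\emph{From (c) to (a) and (b).} This is the substantive direction, and I would route it through Step 2.

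1. \emph{Transitivity.} Condition \ref{eq:c} gives elements of $H$ carrying $i$ to every $j$. For any $x, y \in \mathcal{N}$, take $h_x(i) = x$ and $h_y(i) = y$. Then $h_y \circ h_x^{-1} \in H$ maps $x$ to $y$, because $H$ is a group. So $H$ acts transitively on $\mathcal{N}$.
2. \emph{A suitable bijection.} Lemma \ref{lem:sdr} now applies. It yields a bijection $\phi$ of $\mathcal{N}\setminus\{i\}$ with $\phi(j) \in S_j$ for every $j$.
3. \emph{Choosing the $g_j$.} By the definition of $S_j$, for each $j$ there is $g_j \in H$ with $g_j(j) = i$ and $g_j(i) = \phi(j)$. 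Fix one such choice. Then \ref{eq:a} holds directly.
4. \emph{Distinct images.} For $j \neq k$, injectivity of $\phi$ gives $g_j(i) = \phi(j) \neq \phi(k) = g_k(i)$, which is \ref{eq:b}.
5. \emph{The index $j = i$.} If the collection is also meant to include $j = i$, take $g_i = \mathrm{id}$. Its value $g_i(i) = i$ differs from every $\phi(j) \in \mathcal{N}\setminus\{i\}$, so \ref{eq:b} still holds on all of $\mathcal{N}$. This also matches the pigeonhole domain used earlier.

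\emph{Main obstacle.} The delicate step is passing from \ref{eq:c} to transitivity of the generated group and then invoking Lemma \ref{lem:sdr}. It needs care on two points:

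- Condition \ref{eq:c} is stated for a subcollection of $\langle \mathcal{G}^{\mathcal{N}} \rangle$ rather than of $\mathcal{G}^{\mathcal{N}}$ itself. This is harmless, since $H$ is closed under inversion and composition.
- The bookkeeping of the index $j = i$ must be kept consistent with the domain used in Proposition \ref{prop-super-unform-be}.

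Once this direction is in place, Theorem \ref{theorem-super-unform-be} follows: condition (\refeq{eq-transitivity-condition}) implies \ref{eq:c}, and Proposition \ref{prop-super-unform-be} then applies.
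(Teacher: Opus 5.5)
Your proposal is correct and follows the paper's proof: the forward direction is the paper's observation that (a) alone already gives (c), which is your $g_j^{-1}$ remark, and your pigeonhole reindexing is an equally valid variant. The reverse direction matches the paper step for step: you derive transitivity of $\langle \mathcal{G}^{\mathcal{N}} \rangle$ from (c), invoke Lemma \ref{lem:sdr}, and choose $g_j$ with $g_j(j)=i$ and $g_j(i)=\phi(j)$.
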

\begin{proof}
\ref{eq:a} and \ref{eq:b} $\implies$ \ref{eq:c} is immediate since \ref{eq:a} alone implies $\langle \mathcal{G}^{\mathcal{N}} \rangle$ is transitive.  We now argue $\impliedby$. By transitivity implied by \ref{eq:c}, we have \ref{eq:c} $\implies$ \ref{eq:a}.  The conclusion follows if \ref{eq:a} $\implies$ \ref{eq:b}.  By Lemma \ref{lem:sdr}, there exists a collection of bijections $\mathcal{G}_{0}^\mathcal{N} = \{g_j \ | \  g_j \in \langle  \mathcal{G}^\mathcal{N} \rangle, j \in \mathcal{N} \setminus \{i\} \}$ satisfying $g_j(j) = i$, $g_j(i) = \phi(j)$, each mapping to unique values. The conclusion follows.    
\end{proof}
\end{proof}

\begin{proof}[Second proof of Theorem \ref{theorem-super-unform-be}]
In what follows, let:
\begin{align*}
  \widetilde{R}_j =  \frac{1}{|\mathcal{K}|} \sum_{k \in \mathcal{K}} \mathbbm{1}(\{\widetilde{W}_k \succeq \widetilde{W}_j\} \cap \{j \in \mathcal{K} \} ).
\end{align*}
  By Proposition \ref{prop-lexicographic-order}, it suffices to bound
  $P(\widetilde{R}_i \leq t \mid i \in \cK)$. For $j \in \cN$, define
\begin{align*}
  \Phi_j(\widetilde{L}) = \bigl( \widetilde{R}_j , \ \mathbbm{1}( j \in \cK) \bigr), \quad \Phi = (\Phi_j(\widetilde{L}))_{j \in \mathcal{N}}.
\end{align*}
Since $\langle \mathcal{G}^{\mathcal{N}} \rangle$ is transitive by assumption, we can find functions $g_j \in \langle \mathcal{G}^{\mathcal{N}} \rangle$ satisfying $g_j(i) = j$ for any $j \in \mathcal{N} \setminus \{i\}$.  Applying these functions, it can readily be seen that:    
\begin{align*}
(\Phi_{g_j(j)}(\widetilde{L}))_{j \in \mathcal{N}} = \Phi((\widetilde{L}_{g_j(j)})_{j \in \mathcal{N}}).
\end{align*}
Thus, by Proposition \ref{theorem-exchangeability-transformations}, it follows that, for any $j \in \mathcal{N}$,
\begin{align*}
\Phi_i(\widetilde{L}) \stackrel{d}{=} \Phi_j(\widetilde{L}). 
\end{align*}
 
  Examining the second coordinate of $\Phi$, we have that $P(j \in \cK)$ does not depend on
  $j \in \cN$.    Moreover, by assumption, $P(j \in \cK) >0$.  Furthermore,  
  \begin{align}
    \label{eq:joint}
    P\bigl( \widetilde{R}_j \leq t, \ j \in \cK \bigr)
      = P\bigl( \widetilde{R}_i \leq t, \ i \in \cK \bigr)
      \qquad \text{for all } j \in \cN .
  \end{align}
 
  Now, a simple counting argument yields
  \begin{align}
  \begin{split}
    \label{eq:count}
    \sum_{j \in \cN} \mathbbm{1}\bigl( \widetilde{R}_j \leq t, \ j \in \cK, |\mathcal{K}| = K \bigr)
      &= \#\{ j \in \cK : \widetilde{R}_j \leq t \} \mathbbm{1}(|\mathcal{K}| = K)
      \\&= \lfloor K t \rfloor \mathbbm{1}(|\mathcal{K}| = K) .
   \end{split}
  \end{align}
Taking expectations,
  \begin{align*}
    |\cN| \, P\bigl( \widetilde{R}_i \leq t, \ i \in \cK \bigr)
      &= \sum_{j \in \cN}
        P\bigl( \widetilde{R}_j \leq t, \ j \in \cK \bigr)
     \\&= E\left[\sum_{K = 1}^{|\mathcal{N}|}\sum_{j \in \cN}\mathbbm{1}
        \bigl( \widetilde{R}_j \leq t, \ j \in \cK, |\mathcal{K}| = K \bigr)\right] 
       \\&= E\left[\sum_{K = 1}^{|\mathcal{N}|} \lfloor Kt\rfloor \mathbbm{1}(|\mathcal{K}| = K)  \right]  
      \\ &\leq t \, E|\cK|
      \\&= t \sum_{j \in \cN} P(j \in \cK)
      = t \, |\cN| P(i \in \mathcal{K}) .
  \end{align*}
  The result follows dividing both sides by $|\mathcal{N}| P(i \in \mathcal{K})$ and invoking the definition of conditional probability. 
\end{proof}

\subsection{Connecting superuniformity to quantiles}
\label{subsec-sample-quantiles}
In the conformal prediction literature, it is also common to state prediction sets in terms of sample quantiles.  In this section, we show that this is also possible under the weaker superuniform condition even when the sample size is random.

Recall that the quantile function $Q(p)$ associated with a distribution $F$ is given by:
\begin{align*}
Q(p; F) = \inf_{x \in \mathbb{R}}\{ x \ | \ F(x) \geq p  \}.
\end{align*}
Let $\hat{q}_{p}$ denote a notion of an empirical quantile associated with data points $x_1,\ldots, x_n$:
\begin{align*}
\hat{q}_{p} = Q(p; F_n),
\end{align*}
where $F_n$ is the empirical CDF:
\begin{align*}
F_n(x) = \frac{1}{n} \sum_{i=1}^n \mathbbm{1}(x_i \leq x).
\end{align*}
Moreover, define:
\begin{align*}
G_n(x) = \frac{1}{n} \sum_{i=1}^n \mathbbm{1}(x_i < x).
\end{align*}
In what follows, we say that $x_i$ is among the $d$ smallest elements of $\{x_1, \ldots, x_n\}$ if $\sum_{j=1}^n \mathbbm{1}(x_j < x_i) \leq d-1$.  
The following property, stated without proof in \citet{NEURIPS2019_8fb21ee7}, will be useful for establishing the relationship between the superuniform property and empirical quantiles.  We provide a proof below.

\begin{lemma}
\label{lemma-quantile}
Let $x_1,\ldots, x_n \in \mathbb{R}$.  Then, for any $i \in [n]$ and $\alpha \in [0,1]$, 
\begin{align*}
x_i \leq \hat{q}_{1-\alpha} \iff  x_i \text{ among the } \lceil n(1-\alpha) \rceil \text{ smallest elements of } x_1, \ldots, x_n.
\end{align*}
\end{lemma}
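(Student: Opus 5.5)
The plan is to reduce both directions to a single quantitative statement about the empirical CDF. Throughout, write $m_i = \sum_{j=1}^n \mathbbm{1}(x_j < x_i) = n G_n(x_i)$. By the definition in the text, $x_i$ is among the $d$ smallest elements exactly when $m_i \leq d-1$. Set $d = \lceil n(1-\alpha) \rceil$.

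The first step records two facts about $F_n$. It is a right-continuous step function taking values in $\{0,1/n,\ldots,1\}$. Consequently the infimum defining $\hat{q}_{1-\alpha} = Q(1-\alpha;F_n)$ is attained, so $F_n(\hat{q}_{1-\alpha}) \geq 1-\alpha$ whenever the set is nonempty. Moreover, because $F_n$ only takes values in multiples of $1/n$, the condition $F_n(x) \geq 1-\alpha$ is equivalent to $nF_n(x) \geq d$.

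Next I identify $\hat{q}_{1-\alpha}$ as an order statistic. Order the data as $x_{(1)} \leq \cdots \leq x_{(n)}$. For $d \geq 1$, I claim $\hat{q}_{1-\alpha} = x_{(d)}$. To see this, note that $nF_n(x_{(d)}) \geq d$, since at least $d$ points are $\leq x_{(d)}$. On the other hand, for any $x < x_{(d)}$ at most $d-1$ points are $\leq x$, so $nF_n(x) \leq d-1$.

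The edge cases are handled separately.
\begin{itemize}
\item If $\alpha = 1$, then $d = 0$. The set $\{x : F_n(x) \geq 0\}$ is all of $\mathbb{R}$, so $\hat{q}_0 = -\infty$. Both sides of the equivalence are then false, since $m_i \leq -1$ is impossible.
\item If $\alpha = 0$, then $d = n$ and $\hat{q}_1 = x_{(n)}$. Both sides are trivially true.
\end{itemize}

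The main step proves the equivalence $x_i \leq x_{(d)} \iff m_i \leq d-1$.

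For the forward direction, if $x_i \leq x_{(d)}$, then every $x_j < x_i$ satisfies $x_j < x_{(d)}$. There are at most $d-1$ such indices, because $x_{(d)}, \ldots, x_{(n)}$ are all $\geq x_{(d)}$. Hence $m_i \leq d-1$.

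For the converse, suppose $x_i > x_{(d)}$. Then $x_{(1)}, \ldots, x_{(d)}$ are all strictly less than $x_i$, so $m_i \geq d$.

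The only delicate point is the handling of ties. This is why the definition uses strict inequality (that is, $G_n$) rather than a rank. I expect this to be the step needing care, but the order-statistic identification resolves it cleanly. Combining the two directions gives the lemma.
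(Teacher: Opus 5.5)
Your proof is correct, but it is organized differently from the paper's.

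\textbf{The paper's route.} The paper never identifies $\hat{q}_{1-\alpha}$ explicitly. For the forward direction it notes that $G_n(x_i) \leq (\lceil n(1-\alpha)\rceil - 1)/n < 1-\alpha$. Hence $F_n(x) \leq G_n(x_i) < 1-\alpha$ for every $x < x_i$, which forces $x_i \leq \hat{q}_{1-\alpha}$. For the converse it argues by contrapositive. It takes $x^*$ to be a maximizer of $F_n$ over the $\lceil n(1-\alpha)\rceil$ smallest elements, so that $F_n(x^*) \geq 1-\alpha$ and $x^* < x_i$, which gives $\hat{q}_{1-\alpha} \leq x^* < x_i$.

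\textbf{Your route.} You first prove $\hat{q}_{1-\alpha} = x_{(d)}$, using that $nF_n$ is integer-valued. After that, both directions reduce to pure counting.

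\textbf{What each buys.} The paper's argument is shorter and never has to move between order statistics and indices in the presence of ties. Yours isolates the identity $\hat{q}_{1-\alpha} = x_{(\lceil n(1-\alpha)\rceil)}$. The paper uses this identity without proof in the next proposition, for the calibration quantile $\hat{q}_{\tau}^{cal}$. Your version also treats the degenerate case $\alpha = 1$ explicitly. There $d = 0$ and $\hat{q}_0 = -\infty$, while the paper's $x^*$ would be a maximizer over an empty set.

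\textbf{Two small remarks.} Neither affects your argument.
\begin{itemize}
\item Your separate $\alpha = 0$ case is already covered by the general $d \geq 1$ argument.
\item The set $\{x : F_n(x) \geq 1-\alpha\}$ is always nonempty. So the accurate statement is that the infimum is attained whenever it is finite, i.e.\ when $\alpha < 1$, not "whenever the set is nonempty."
\end{itemize}
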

\begin{proof}
We start by showing that $x_i \text{ among the } \lceil n(1-\alpha) \rceil \text{ smallest elements} \implies x_i \leq q_{1-\alpha}$.
If $x_i$ is among the $\lceil n(1-\alpha) \rceil$ smallest, then,
\begin{align*}
G_n(x_i) \leq \frac{\lceil n(1-\alpha) \rceil-1}{n} < 1-\alpha.   
\end{align*}
Thus, for any $x<x_i$, $F_n(x) < 1-\alpha$ and it follows that $x_i \leq \hat{q}_{1-\alpha}$.

Now, to show the other direction, we will prove the contrapositive.  Suppose that $x_i$ is not among the $\lceil n(1-\alpha) \rceil$ smallest.  Then, we have that:  
\begin{align*}
F_n(x_i) \geq \frac{\lceil n(1-\alpha)\rceil+1}{n} > 1-\alpha.
\end{align*}
Let $x^*$ be a maximizer of $F_n(x)$ over the $\lceil n(1-\alpha) \rceil$ smallest elements.  By assumption, $x^*<x_i$.  Furthermore,
\begin{align*}
F_n(x^*) \geq \frac{\lceil n(1-\alpha)\rceil}{n} \geq 1-\alpha.
\end{align*}
Therefore, under this hypothesis, $x_i > \hat{q}_{1-\alpha}$.   The result follows. 
\end{proof}

In what follows, let $(W_1, \ldots, W_n)$ be scalar-valued random variables, and let $\mathcal{K}$ be a potentially random subset of $[n]$. For some $i \in [n]$, define: 
\begin{align*}
\Gamma_i = \frac{1}{|\mathcal{K}|} \sum_{k \in \mathcal{K}}\mathbbm{1}(W_k \geq W_i).
\end{align*}
Furthermore, let $\hat{q}_{1-\alpha}^{\mathcal{K}}$ be the $1-\alpha$ quantile with respect to the empirical distribution of $(W_k)_{k \in \mathcal{K}}$.  We have the following result:

\begin{proposition}
\label{prop-quantile-ci}
Suppose that $W_1, \ldots, W_n$ are random variables such that for any $t \in [0,1]$, $P(\Gamma_i \leq t \ | \ i \in \mathcal{K}) \leq t$ for some $i \in [n]$.  Then, for any $\alpha \in [0,1],$
\begin{align*}
P(W_i \leq \hat{q}_{1-\alpha}^{\mathcal{K}} \ | \ i \in \mathcal{K}) \geq 1-\alpha.
\end{align*}
\end{proposition}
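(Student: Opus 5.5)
The plan is to reduce the quantile statement to the superuniformity hypothesis through Lemma \ref{lemma-quantile}, applied pointwise on the event $\{i \in \mathcal{K}\}$. On that event, write $c = |\mathcal{K}|$. Lemma \ref{lemma-quantile}, applied to the finite collection $(W_k)_{k \in \mathcal{K}}$, gives
\begin{align*}
W_i \leq \hat{q}_{1-\alpha}^{\mathcal{K}} \iff \sum_{k \in \mathcal{K}} \mathbbm{1}(W_k < W_i) \leq \lceil c(1-\alpha) \rceil - 1 .
\end{align*}
It therefore suffices to show that the complementary event $\{\sum_{k \in \mathcal{K}} \mathbbm{1}(W_k < W_i) \geq \lceil c(1-\alpha)\rceil\}$ is contained in $\{\Gamma_i \leq t\}$ for a suitable $t$, with the same $t$ working for every $c$. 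Because $\mathcal{K}$ is random, this uniformity in $c$ matters.

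For the containment, note that since $i \in \mathcal{K}$,
\begin{align*}
\sum_{k \in \mathcal{K}} \mathbbm{1}(W_k \geq W_i) = c - \sum_{k \in \mathcal{K}} \mathbbm{1}(W_k < W_i).
\end{align*}
On the complementary event this is at most $c - \lceil c(1-\alpha)\rceil \leq c - c(1-\alpha) = c\alpha$. Hence $\Gamma_i \leq \alpha$ on that event, and this holds for every realized value $c \geq 1$. So we take $t = \alpha$, which does not depend on $c$. Then
\begin{align*}
P(W_i > \hat{q}_{1-\alpha}^{\mathcal{K}} \mid i \in \mathcal{K}) \leq P(\Gamma_i \leq \alpha \mid i \in \mathcal{K}) \leq \alpha
\end{align*}
by the hypothesis, and taking complements gives the claim.

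The main point needing care is the edge cases of $\alpha$. When $\alpha = 0$, we have $\lceil c \rceil = c$, so the complementary event requires $c$ elements strictly below $W_i$. That is impossible because $W_i$ itself is in $\mathcal{K}$, so the event is empty and the bound is trivial. When $\alpha = 1$, the claim is vacuous. A second point is that Lemma \ref{lemma-quantile} is stated for deterministic points. It is applied realization-by-realization on $\{i \in \mathcal{K}\}$, which is legitimate because both sides of the equivalence are measurable functions of $(W, \mathcal{K})$. Finally, the conditioning requires $P(i \in \mathcal{K}) > 0$, which is implicit in the hypothesis.
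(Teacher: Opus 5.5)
Your proof is correct and follows essentially the same route as the paper. Both apply Lemma \ref{lemma-quantile} pointwise on the event $\{i \in \mathcal{K}\}$ and use the deterministic implication $\Gamma_i > \alpha \Rightarrow W_i \leq \hat{q}_{1-\alpha}^{\mathcal{K}}$ (you prove its contrapositive), after which the superuniformity hypothesis at $t = \alpha$ finishes the argument.
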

\begin{proof}
Let $ A_i = \{ \omega \in \Omega \ | \ i \in \mathcal{K}\}$. By the deterministic relationship established in Lemma \ref{lemma-quantile}, it follows that, for all $\omega \in A_i$, 
\begin{align*}
W_{i} \leq \hat{q}_{1-\alpha}^{\mathcal{K}} \iff W_{i} \text{ among } \lceil |\mathcal{K}|(1-\alpha) \rceil \text{ smallest elements of } (W_k)_{k \in \mathcal{K}}.
\end{align*}
Note that, for all  $\omega \in A_i$,
\begin{align*}
\Gamma_i > \alpha  \iff \text{ at most } \lceil  |\mathcal{K}|(1-\alpha) \rceil -1  \text{ elements less than } W_{i}.
\end{align*}
Therefore, for all  $\omega \in A_i$,
\begin{align*}
\Gamma_i > \alpha \implies W_i \leq \hat{q}_{1-\alpha}^{ \mathcal{K}}.
\end{align*}
Consequently,
\begin{align*}
P(W_{i} \leq \hat{q}_{1-\alpha}^{\mathcal{K}} \ | \ i \in \mathcal{K}) \geq P(\Gamma_i > \alpha \ | \ i \in \mathcal{K}) \geq 1-\alpha.
\end{align*}
\end{proof}
With split conformal prediction, one often considers an empirical quantile computed on the calibration points, which excludes the test point.  To account for this exclusion, the level of the empirical quantile is adjusted.  Let $\mathcal{K}$ be a random subset of $[n]$ that contains the test index $i$ and $W_{cal} = (W_k)_{k \in \mathcal{K}\setminus\{i\}}$ denote the calibration data, which excludes $W_i$ and $N = |\mathcal{K}|-1$.   

Let $\hat{F}_n^{cal}$ and $\hat{q}_{\tau}^{cal}$ denote empirical CDFs and level $\tau$ empirical quantiles of $W_{cal}$, respectively, where $\tau = \left(1 + 1/N \right)(1- \alpha)$.

 We have the following proposition:
\begin{proposition}
Suppose that for any $t \in [0,1]$, $P(\Gamma_i \leq t \ | \ i \in \mathcal{K}) \leq t$.  Then, for any $\alpha \in [0,1],$
\begin{align*}
P(W_i \leq \hat{q}_{\tau}^{cal} \ | \ i \in \mathcal{K}) \geq 1-\alpha.
\end{align*}
\end{proposition}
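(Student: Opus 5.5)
The plan is to reduce the claim to Proposition \ref{prop-quantile-ci}. I would show that, on the event $\{i \in \mathcal{K}\}$, the inequality $W_i \leq \hat{q}_{\tau}^{cal}$ is implied by $W_i \leq \hat{q}_{1-\alpha}^{\mathcal{K}}$, or more directly by $\Gamma_i > \alpha$. Since Proposition \ref{prop-quantile-ci} gives $P(\Gamma_i > \alpha \mid i \in \mathcal{K}) \geq 1-\alpha$, the result would follow by monotonicity of probability. The whole argument is deterministic for each $\omega$ with $i \in \mathcal{K}$, so the randomness of $N = |\mathcal{K}|-1$ causes no difficulty.

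First I would fix $\omega \in A_i$ and write $m = |\mathcal{K}| = N+1$. By Lemma \ref{lemma-quantile}, applied to the $N$ calibration points, $W_i \leq \hat{q}_{\tau}^{cal}$ holds iff fewer than $\lceil N\tau \rceil$ calibration scores are strictly smaller than $W_i$. Here the element-based characterization extends to an arbitrary point $x$: one has $x \leq Q(\tau; \hat F^{cal}_N)$ iff $\#\{k : W_k < x\} \leq \lceil N\tau\rceil - 1$, which follows from the same argument as Lemma \ref{lemma-quantile}. I would re-derive this short variant rather than quote the lemma, since $W_i$ is not among the calibration points.

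Next, $N\tau = (N+1)(1-\alpha) = m(1-\alpha)$, so $\lceil N\tau\rceil = \lceil m(1-\alpha)\rceil$. The number of calibration scores strictly below $W_i$ equals the number of scores in $\mathcal{K}$ strictly below $W_i$, because $W_i$ itself is not strictly below $W_i$. As in the proof of Proposition \ref{prop-quantile-ci}, $\Gamma_i > \alpha$ means $\#\{k \in \mathcal{K}: W_k < W_i\} < m(1-\alpha)$, that is, at most $\lceil m(1-\alpha)\rceil - 1$. Hence $\Gamma_i > \alpha$ implies $W_i \leq \hat{q}^{cal}_\tau$ on $A_i$, and the conclusion follows.

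The main obstacle is the edge cases. If $\tau > 1$, the quantile is $+\infty$ under the convention $\inf\emptyset = \infty$, and the inequality holds trivially. If $N = 0$, then $\tau$ is undefined; I would adopt the convention $\hat{q}^{cal}_\tau = +\infty$, which is consistent because $\Gamma_i = 1 > \alpha$ unless $\alpha = 1$. Finally, I would verify the off-by-one in the strict-versus-weak counting when $m(1-\alpha)$ is an integer.
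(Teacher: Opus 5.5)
Your proposal is correct and follows essentially the same route as the paper. On $\{i \in \mathcal{K}\}$ you use $N\tau = |\mathcal{K}|(1-\alpha)$ to show that $W_i \le \hat{q}^{cal}_\tau$ is equivalent to $W_i$ lying among the $\lceil |\mathcal{K}|(1-\alpha)\rceil$ smallest scores in $\mathcal{K}$. That event is implied by $\Gamma_i > \alpha$, and the lower bound $P(\Gamma_i > \alpha \mid i \in \mathcal{K}) \ge 1-\alpha$ is just the hypothesis at $t=\alpha$, not Proposition \ref{prop-quantile-ci}.

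Your explicit handling of two edge cases is a small gain in rigor over the paper's write-up. The paper's claim $\hat{q}^{cal}_\tau \in \{-\infty\} \cup \{W_k\}$ omits the value $+\infty$ that arises when $\tau > 1$, and it leaves $N = 0$ undefined.
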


\begin{proof}
As before, let $A_i = \{\omega \in \Omega \ | \ i \in \mathcal{K} \}$. Due to the discrete nature of $\hat{F}_n^{cal}$, $\hat{q}_\tau^{cal} \in \{-\infty\} \cup \{W_k\}_{k \in \mathcal{K} \setminus \{i\} }$.  Thus, for all $\omega \in A_i$, 
\begin{align*}
W_i \leq \hat{q}_\tau^{cal} &\iff W_i \leq \text{the } \lceil N\tau \rceil \text{th smallest element of } W_{cal} 
\\ & \iff  W_i \text{ among the } \lceil N \tau \rceil  \text{ smallest elements of } (W_k)_{k \in \mathcal{K}}
\\ & \iff W_i \leq \hat{q}_{1-\alpha}^{\mathcal{K}},
\end{align*}
where the last line follows from the fact that $N\tau = |\mathcal{K}|(1-\alpha)$. 
Due to analogous reasoning used in the proof of Proposition \ref{prop-quantile-ci}, it now follows that, for all $\omega \in A_i$,
\begin{align*}
\Gamma_i > \alpha \implies   W_i \leq  \hat{q}_\tau^{cal}.  
\end{align*}
Consequently,
\begin{align*}
P( W_i \leq  \hat{q}_\tau^{cal} \ | \ i \in \mathcal{K}) \geq P(\Gamma_i > \alpha \ | \ i \in \mathcal{K}) \geq 1-\alpha.
\end{align*}
\end{proof}

\subsection{Proofs for Section \ref{sec-problem-setup}: Problem Setup and Section \ref{sec-conformal-for-sampled-elements}: Array Conformal Prediction for Sampled Elements}

We start by stating a proposition related to preservation of distributional invariance under appropriate transformations, which plays a key role in our proofs below.  This proposition, stated in \citet{lunde2023conformal}, is an adaptation of a result due to \citet{dean-verducci} and \citet{commenges-transformations} stated in the review article of \citet{kuchibhotla2021exchangeability}.     
\begin{proposition}
\label{theorem-exchangeability-transformations}
 Let $X$ be a random variable taking values in $\mathcal{X}$ and suppose that $Y = H(X)$ for some $H: \mathcal{X} \mapsto \mathcal{Y}$.  Further suppose that for some collection of functions $\mathcal{F}$ $\mathcal{X} \mapsto \mathcal{X}$,
 \begin{align}
 \label{eq-invariance-assumption}
  f(X) \stackrel{d}{=} X \ \ \forall \ f \in \mathcal{F}.  
 \end{align}
Furthermore, let $\mathcal{G}$ be a collection of functions $\mathcal{Y} \mapsto \mathcal{Y}$ and suppose that for any $g \in \mathcal{G}$, there exists a corresponding $f \in \mathcal{F}$ such that,
\begin{align}
\label{eq-transformation-assumption}
g(H(X)) = H(f(X)) \ \  a.s.    
\end{align}
Then,
\begin{align*}
g(Y) \stackrel{d}{=} Y \ \ \forall \ g \in \mathcal{G}.  
\end{align*}
\end{proposition}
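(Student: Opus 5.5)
This is a standard pushforward argument, and I would prove it directly from the two hypotheses. Fix $g \in \mathcal{G}$ and let $f \in \mathcal{F}$ be the function paired with it by (\refeq{eq-transformation-assumption}). I will show that $g(Y)$ and $Y$ have the same distribution, in three steps.

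First, rewrite $g(Y)$ in terms of $f$. Since $Y = H(X)$, we have $g(Y) = g(H(X))$. By (\refeq{eq-transformation-assumption}), this equals $H(f(X))$ almost surely. Random elements that agree almost surely have the same law, so it suffices to show $H(f(X)) \stackrel{d}{=} H(X)$.

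Second, use the invariance of $X$. By (\refeq{eq-invariance-assumption}), $f(X) \stackrel{d}{=} X$. Applying the same measurable map $H$ to both sides preserves equality in distribution. Concretely, for any measurable $B \subseteq \mathcal{Y}$,
\begin{align*}
P(H(f(X)) \in B) = P(f(X) \in H^{-1}(B)) = P(X \in H^{-1}(B)) = P(H(X) \in B).
\end{align*}

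Third, chain the equalities: $g(Y) \stackrel{d}{=} H(f(X)) \stackrel{d}{=} H(X) = Y$. Since $g \in \mathcal{G}$ was arbitrary, the claim holds for every $g \in \mathcal{G}$.

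The only point needing care is measurability. I will assume, as is implicit in the statement, that $H$, $f$ and $g$ are measurable with respect to the relevant $\sigma$-algebras on $\mathcal{X}$ and $\mathcal{Y}$. Under that assumption, $H^{-1}(B)$ is a measurable set and every law in the display above is well defined. I do not expect any genuine obstacle beyond this.
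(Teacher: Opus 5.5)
Your proof is correct: the chain $g(Y) = g(H(X)) = H(f(X))$ a.s., followed by $H(f(X)) \stackrel{d}{=} H(X)$ via the pushforward identity, is exactly the standard argument, and your measurability caveat is the only point that needs stating. The paper gives no proof of its own here; it cites earlier work for this result, and your direct verification is the natural way to establish it.
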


\begin{proposition}
\label{prop-hyperedge-exchangeability}
The data-generating process in Example \ref{ex-hyperedge-sampling}  is jointly exchangeable.
\end{proposition}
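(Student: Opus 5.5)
We prove the claim by applying Proposition \ref{theorem-exchangeability-transformations} with $X = (\mathscr{H}_1,\ldots,\mathscr{H}_m, V)$ and $H(X) = (V, M)$. For each permutation $\sigma$ we take $f_\sigma$ to be the map that relabels every hyperedge, $\mathscr{H}_\ell \mapsto \mathscr{H}_\ell^{\sigma^{-1}}$ (or $\mathscr{H}_\ell^{\sigma}$, whichever convention matches $B^\sigma = (B_{\sigma(i)\sigma(j)})$), and replaces $V$ by $V^\sigma$. We take $g_\sigma(V,M) = (V^\sigma, M^\sigma)$. The proof has two parts. First we check the invariance $f_\sigma(X) \stackrel{d}{=} X$. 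Second we check the equivariance identity $g_\sigma(H(X)) = H(f_\sigma(X))$.

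For the equivariance identity, note that $M^\sigma_{jk} = M_{\sigma(j)\sigma(k)} = \mathbbm{1}(\{\sigma(j),\sigma(k)\}\subseteq \mathscr{H}_\ell \text{ for some } \ell)$. This holds if and only if $\{j,k\} \subseteq \sigma^{-1}(\mathscr{H}_\ell)$ for some $\ell$. So $M^\sigma$ is exactly the mask built from the relabeled hyperedges $\sigma^{-1}(\mathscr{H}_\ell) = \mathscr{H}_\ell^{\sigma^{-1}}$. This is a deterministic identity, so the almost-sure requirement (\ref{eq-transformation-assumption}) holds.

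The invariance step is the main obstacle, because Example \ref{ex-hyperedge-sampling} only stated the invariance (\ref{eq-hyper-permutation}) for the hyperedge sequence by itself. As stated, the example does not say how $V$ depends on the hyperedges. I would therefore read the example as implicitly assuming joint invariance of $(V, \mathscr{H}_1,\ldots,\mathscr{H}_m)$ under simultaneous node relabeling. The simplest sufficient case is that $V$ is jointly exchangeable and independent of the hyperedge sequence; then the product of the two invariant laws is invariant. The more natural case is the latent-variable setting described after the example: both $V$ and the hyperedges are generated from i.i.d.\ node latents plus independent noise, with hyperedge probabilities depending on the latents of the nodes involved. In that case joint invariance follows from exchangeability of the i.i.d.\ latents, and I would verify it by applying Proposition \ref{theorem-exchangeability-transformations} again, to the generative map from latents and noise to the data.

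Once joint invariance of $(V,\mathscr{H})$ is established, (\ref{eq-hyper-permutation}) is exactly its marginal statement for the hyperedges. Since $\sigma \mapsto \sigma^{-1}$ is a bijection of the permutation group, whether the invariance is phrased for $\sigma$ or for $\sigma^{-1}$ does not matter. Proposition \ref{theorem-exchangeability-transformations} then gives $(V^\sigma, M^\sigma) \stackrel{d}{=} (V,M)$ for all $\sigma$, which is Assumption \ref{assumption-je1}. If the example was meant to have $V$ independent of the hyperedges, the product-measure argument in the previous paragraph already settles the invariance step.
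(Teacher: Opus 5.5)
Your argument is correct and follows the same route as the paper. Both apply Proposition~\ref{theorem-exchangeability-transformations}, with $f$ relabeling every hyperedge, $H$ building the mask from the hyperedges, and $g$ permuting rows and columns.

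The difference is in scope. The paper takes $X=(\mathscr{H}_1,\ldots,\mathscr{H}_m)$ and $H(X)=M$ alone, so what it actually establishes is $M^\sigma \stackrel{d}{=} M$; $V$ never enters its argument. Putting $V$ into $X$ and requiring invariance of the joint law of $(V,\mathscr{H}_1,\ldots,\mathscr{H}_m)$ under simultaneous relabeling is not excess caution. That assumption, or independence, or your latent-variable construction, is genuinely needed, because \eqref{eq-hyper-permutation} constrains only the hyperedges.

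Here is a counterexample without it. Take $n=3$, $m=1$, $\mathscr{H}_1$ uniform on the three pairs, trivial covariates, and let $Y$ be the single-edge graph on the pair obtained from $\mathscr{H}_1$ by the cyclic map $\{1,2\}\mapsto\{2,3\}\mapsto\{1,3\}\mapsto\{1,2\}$. Then $Y$ and $M$ are each jointly exchangeable and \eqref{eq-hyper-permutation} holds. Now let $\sigma$ be the transposition of $1$ and $2$, and consider the event that $M$'s edge is $\{1,2\}$ and $Y$'s edge is $\{1,3\}$. This event has probability $0$ under the law of $(Y,M)$ but probability $1/3$ under the law of $(Y^\sigma,M^\sigma)$, so $(V,M)$ is not jointly exchangeable.

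Your $\sigma^{-1}$ bookkeeping is also more accurate than the paper's. With $B^\sigma=(B_{\sigma(i)\sigma(j)})$ one has $M^\sigma=H(f^{\sigma^{-1}}\mathscr{H})$, not $H(f^{\sigma}\mathscr{H})$ as the paper writes. The slip is harmless only because \eqref{eq-hyper-permutation} is assumed for every permutation.
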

\begin{proof}
Let $\mathcal{X}$ denote the set of all subsets of $[n]$.  For any permutation function $\sigma:[n] \mapsto [n]$, define the function $f^\sigma: \mathcal{X} \mapsto \mathcal{X}$ as:
\begin{align*}
f^\sigma(A) = \{ \sigma(x) \ | \  x \in A   \}
\end{align*}
for $A \subseteq [n]$.  Furthemore, let $\mathcal{H} = (\mathcal{H}_1, \ldots, \mathcal{H}_m)$.  By our assumption (\refeq{eq-hyper-permutation}),
$f^{\sigma} \mathcal{H}  \stackrel{d}{=} \mathcal{H}$ for any $\sigma$.  In the notation of Proposition \ref{theorem-exchangeability-transformations}, let $H$ denote the function that constructs a weighted adjacency matrix $M \in [n] \times [n]$ from $\mathcal{H}$ and $g$ be a function that transforms $M$ by applying the same permutation $\sigma$ to the row and column indices of $M$.  Clearly, for any $f^\sigma$, applying the same permutation to the row and column indices of $H(\mathcal{H})$ results in a matrix that is equal to $H(f^\sigma\mathcal{H})$.  Therefore, the result follows from Proposition \ref{theorem-exchangeability-transformations}.

\end{proof}

In what follows, we say that an array $U$ is jointly $\Sigma_{\mathcal{N}}$-exchangeable if for any $\sigma \in \Sigma_{\mathcal{N}}$,
\begin{align*}
U^\sigma \stackrel{d}{=} U.
\end{align*}
Below, we provide a proof of Theorem \ref{theorem-split-conformal}; the proof of Theorem \ref{theorem-super-uniform-array} is analogous. \\

\noindent \textit{Proof of Theorem \ref{theorem-split-conformal}}.  We start by verifying that the array $(S,M_1, M_2)$ is jointly $\Sigma_{\mathcal{N}}$-exchangeable.

To this end, we will first show that $(V,\mathcal{Z}, M_1, M_2)$ is jointly $\Sigma_{\mathcal{N}}$-exchangeable.    Since $(V,M_1,M_2)$ is jointly $\Sigma_{\mathcal{N}}$-exchangeable by Assumption \ref{assumption-je1-prime}, by Proposition \ref{theorem-exchangeability-transformations}, it suffices to show that permuting the rows and columns of $(V,M_1,M_2)$ with $\sigma \in \Sigma_{\mathcal{N}}$ permutes $(V,\mathcal{Z}, M_1, M_2)$ accordingly.  This is guaranteed by Assumption \ref{assumption-network-covariates}.

Now, since permuting the rows and columns of $(V,\mathcal{Z}, M_1, M_2)$ permutes $(Y, \boldsymbol{\hat{\mu}}, M_1, M_2)$ accordingly by Assumption \ref{assumption-permutation-invariant}, $(Y, \boldsymbol{\hat{\mu}}, M_2)$ is jointly $\Sigma_{\mathcal{N}}$-exchangeable by Proposition \ref{theorem-exchangeability-transformations} again.  Now, since the same nonconformity score is applied to all observations, it follows that $(S,M_2)$ is jointly $\Sigma_{\mathcal{N}}$-exchangeable.

Now, we verify the conditions of Theorem \ref{theorem-super-unform-be}.  Let $\mathcal{N}$ be the collection of nodes such that $P(\zeta_i= 1) >0$. For each $\{i,j\} \subseteq \mathcal{N}^2$ with $k \neq l$, let $g_{\{i,j\}}$ be the function:
\begin{align*}
g_{\{i,j\}}(\{x,y\}) = \{ \sigma_{\{i,j\}}(x), \sigma_{\{i,j\}}(y)\},
\end{align*}
where $\sigma_{\{i,j\}}$ is the permutation that swaps $i$ and $k$ and $j$ and $l$, respectively, and keeps all other elements fixed.  If $|\{i,j\} \cap \{k, l \}| = 1$, then $\sigma_{\{i,j\}}$ swaps distinct elements  $\sigma_{\{i,j\}}(\{k,l\}) = \{i,j\}$, the group invariance is transitive.  
The result follows.  \qed    
\bigskip

In the above proof, we considered the case where the array is symmetric, for which it is more natural to represent an index of an off-diagonal array element using sets rather than ordered pairs.  When the array is potentially asymmetric, the ordered pair representation is more appropriate. In this case, the class of invariances is also transitive, and thus, superuniformity continues to hold.  
In what follows, for  a set $\mathcal{N} \subseteq [n]$, let $\widetilde{\mathcal{N}}^2 = \{(i,j) \in \mathcal{N}^2 \ | \ i \neq j \}$ denote the set of off-diagonal elements of $\mathcal{N}^2$.  The following proposition is an immediate consequence of transitivity of the group of interest.    

\begin{proposition}
\label{prop-ordered-pair-case}
Let $\Sigma^{*}_{\mathcal{N}}$ denote the group of permutations of
$\mathcal{N}$, and for $\sigma \in \Sigma^{*}_{\mathcal{N}}$ let
$g_\sigma : \widetilde{\mathcal{N}}^2 \mapsto \widetilde{\mathcal{N}}^2$ be
given by $g_\sigma((i,j)) = (\sigma(i), \sigma(j))$.  Then the collection
\begin{align*}
\mathcal{G} = \left\{ g_\sigma \ | \ \sigma \in \Sigma^{*}_{\mathcal{N}}
\right\}
\end{align*}
acts transitively on $\widetilde{\mathcal{N}}^2$.  Consequently, if the
distributional invariance condition (\refeq{eq-invariance-L}) holds for
$\mathcal{G}$, then the conditions of
Theorem \ref{theorem-super-unform-be} are satisfied.
\end{proposition}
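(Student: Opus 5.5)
The plan is to prove transitivity directly and then read off the consequence from Theorem \ref{theorem-super-unform-be}. Fix two off-diagonal ordered pairs $(i,j)$ and $(k,l)$ in $\widetilde{\mathcal{N}}^2$, so that $i \neq j$ and $k \neq l$. I will construct $\sigma \in \Sigma^{*}_{\mathcal{N}}$ with $\sigma(i) = k$ and $\sigma(j) = l$.

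The maps $i \mapsto k$ and $j \mapsto l$ define an injection from the two-element set $\{i,j\}$ into $\mathcal{N}$; it is injective because $k \neq l$. Any injection of a subset of a finite set into that set extends to a bijection of the whole set. Concretely, map $\mathcal{N}\setminus\{i,j\}$ onto $\mathcal{N}\setminus\{k,l\}$ by any bijection; these two sets have the same cardinality $|\mathcal{N}|-2$. The resulting $\sigma$ is a permutation of $\mathcal{N}$, and $g_\sigma((i,j)) = (k,l)$.

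I also need $g_\sigma$ to map $\widetilde{\mathcal{N}}^2$ into itself bijectively. This holds because $\sigma$ is injective, so $\sigma(x)\neq\sigma(y)$ whenever $x \neq y$. Moreover $g_{\sigma^{-1}}$ is the inverse of $g_\sigma$. Hence $\mathcal{G}$ is a collection of bijections of $\widetilde{\mathcal{N}}^2$ that acts transitively.

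For the consequence, take the index set in Theorem \ref{theorem-super-unform-be} to be $\widetilde{\mathcal{N}}^2$ and $\mathcal{G}^{\mathcal{N}} = \mathcal{G}$. Every element of $\mathcal{G}$ is already a bijection of the index set, so restriction changes nothing. By Lemma \ref{lemma-invariance-group}, $\langle \mathcal{G}\rangle$ is a group that contains $\mathcal{G}$. Transitivity of $\mathcal{G}$ therefore gives condition (\refeq{eq-transitivity-condition}) for $\langle \mathcal{G}\rangle$. Together with the assumed invariance (\refeq{eq-invariance-L}), these are exactly the hypotheses of the theorem.

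There is no real obstacle. The only points that need care are that the off-diagonal restriction (i.e.\ $k\neq l$) is what makes the extension possible, and that the degenerate case $|\mathcal{N}|=2$ is covered: there $\sigma$ is either the identity or the transposition.
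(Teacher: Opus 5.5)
Your proposal is correct and follows the paper's proof. Both extend $i\mapsto k$, $j\mapsto l$ to a permutation $\sigma$ of $\mathcal{N}$ so that $g_\sigma((i,j))=(k,l)$, then observe that the group generated by $\mathcal{G}$ contains $\mathcal{G}$ and is therefore transitive, so Theorem \ref{theorem-super-unform-be} applies. You additionally spell out details the paper leaves implicit: the explicit extension of the injection to a bijection, the fact that $g_\sigma$ preserves the off-diagonal set, and the appeal to Lemma \ref{lemma-invariance-group}.
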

 
\begin{proof}
Let $(i,j)$ and $(k,l)$ be elements of $\widetilde{\mathcal{N}}^2$.  Since
$i \neq j$ and $k \neq l$, there exists $\sigma \in \Sigma^{*}_{\mathcal{N}}$
with $\sigma(i) = k$ and $\sigma(j) = l$, and the corresponding $g_\sigma$
satisfies $g_\sigma((i,j)) = (k,l)$.  Hence $\mathcal{G}$ acts transitively on
$\widetilde{\mathcal{N}}^2$, and so does the group it generates; the
conclusion follows from Theorem \ref{theorem-super-unform-be}.
\end{proof}

\noindent \textit{Proof of Theorem \ref{theorem-row-column}.}  
By similar reasoning to the proof of Theorem \ref{theorem-super-uniform-array} above, $(V, \mathcal{Z}, M_1, M_2)$ is jointly exchangeable.  Now, notice that permuting the rows and columns of this array results in permuting the rows and columns of $(\mathcal{U}, M_1, M_2 )$ accordingly.  Therefore, this array is also jointly exchangeable.

Now consider the array $(S(\mathcal{U}_{ij}), M_1(i,j), M_2(i,j) )_{1 \leq i,j \leq n}$, where $S(\mathcal{U}_{ij}) =0 $ if $(i,j) \not \in \mathcal{H}_2$.  By construction, $S(\mathcal{U}_{ij}) = \|\hat{F}_{\pm}(\mathcal{U}_{ij}) \|$, where:
\begin{align*}
\hat{F}_{\pm} = \argmin_{T: T(x) \sim \hat{\mu}} \int \| x - T(x) \| \ d\hat{\nu}(x),
\end{align*}
\begin{align*}
\hat{\nu} = \frac{1}{|\mathcal{H}_2|} \sum_{(i,j) \in \mathcal{H}_2}\mathcal{U}_{ij}, \quad \hat{\mu} = \frac{1}{|\mathcal{H}_2|} \sum_{(i,j) \in \mathcal{H}_2} a_{ij},
\end{align*}
and $(a_{ij})_{(i,j) \in \mathcal{H}_2}$ is a set of low discrepancy points from the reference measure $\mu$.  Notice that the loss function:
\begin{align*}
\frac{1}{|\mathcal{H}_2|}\sum_{(i,j) \in \mathcal{H}_2} \| \mathcal{U}_{ij} - T(\mathcal{U}_{ij}) \|
\end{align*}
is invariant under relabeling of the pairs and depends only on order statistics.  Therefore, relabeling $\mathcal{U}$ corresponds to relabeling $(S(\mathcal{U}), M_1, M_2)$ accordingly.  Thus, by Proposition \ref{theorem-exchangeability-transformations}, $(S(\mathcal{U}), M_2)$ is jointly $\Sigma_\mathcal{N}^*$ exchangeable.  The result now follows from Proposition \ref{prop-ordered-pair-case}. \qed     
\bigskip

\noindent \textit{Proof of Theorem \ref{thm:column-selective-cp}.}
It suffices to show that $(S_{ir})_{(i,j) \in \mathcal{H}_2(\cdot, r)}$ is conditionally exchangeable. Note that under the conditions of Theorem \ref{theorem-row-column}, $(S_{ij}, \mathcal{Z}_{ij}, M_{ij})_{1 \leq i,j \leq n}$ is jointly exchangeable. Now, observe that:
\begin{align*}
\ & \ P\left(\bigcap_{j \in \mathcal{B}} \{S_{r\sigma(j)} \leq t_j\}  \ \biggr \rvert \ \mathcal{R} = r, M_{2,r} = \mathcal{B} \right)
 \\ = &  \sum_{ \tau(m) = r } \frac{ P\left(\bigcap_{j \in \mathcal{B}} \{S_{r\sigma(j)} \leq t_j\} \cap \{\hat{Z}_{\tau(1)} < \cdots < \hat{Z}_{\tau(n)} \}  \cap \{M_{2,r} =\mathcal{B}  \}\right) }{P(\{\mathcal{R} = r\}  \cap \{M_{2,r} = \mathcal{B}\}  )} 
 \\ \stackrel{(i)}{=} &  \sum_{ \tau(m) = r } \frac{ P\left(\bigcap_{j \neq r} \{S_{\sigma(r)\sigma(j)} \leq t_j\} \cap \{\hat{Z}_{\sigma(\tau(1))} < \cdots < \hat{Z}_{\sigma(\tau(n))} \} \cap \{M_{2,r}^\sigma = \mathcal{B}\}\right)}{P(\{\mathcal{R} = r\}  \cap \{M_{2,r} = \mathcal{B} \}  )}
  \\  \stackrel{(ii)}{=} &  \sum_{ \tau(m) = r } \frac{ P\left(\bigcap_{j \neq r} \{S_{rj} \leq t_j\} \cap \{\hat{Z}_{\tau(1)} < \cdots < \hat{Z}_{\tau(n)}\} \cap \{M_{2,r} = \mathcal{B}\} \right)}{P(\{\mathcal{R} = r\}  \cap \{M_{2,r} = \mathcal{B}\}  )} 
 \\ = & \  P\left(\bigcap_{j \neq r} \{S_{rj} \leq t_j\}  \ \biggr \rvert \ \mathcal{R} = r, M_{2,r} = \mathcal{B} \right),
\end{align*}
where $(i)$ follows from the fact  the set of all possible orderings of $\hat{Z}_1, \ldots \hat{Z}_n$ satisfying $\mathrm{rank}(\hat{Z}_r, \{\hat{Z}_1, \ldots, \hat{Z}_{n}\}) = m$  remains unchanged when applying $\sigma$ and $\{M_{2r}^\sigma = \mathcal{B}\} = \{M_{2r} = \mathcal{B}\}$.  $(ii)$ follows from the fact that $((S_{ij})_{1 \leq i, j \leq n}, \mathcal{Z}, M_2)$ is jointly exchangeable.  As discussed in Remark \ref{remark-rank-non-unique}, when ranks are not uniquely defined, the above argument goes through without modification with respect to the lexicographic rank with auxiliary i.i.d. $\mathrm{Uniform}[0,1]$ random variables. The result follows.  \qed

\subsection{Proofs for Section \ref{sec-weighted-conformal}: Array Conformal Prediction for Missing Elements}

In the proof below, we will make use of a coupled version of $M$, denoted $M' = (M_{ij})_{1 \leq i,j \leq n}$.  To this end, suppose that for $1 \leq i < j \leq n$:
\begin{align*}
M_{ij}' = \mathbbm{1}(\eta_{ij}' \leq w(\xi_i, \xi_j)),
\end{align*}
where $\eta_{kl}' = \eta_{kl}$ and for all other $1 \leq i< j \leq n$, $\eta_{ij}' \stackrel{d}{=} \eta_{ij}$ and is independent of all other random variables.  Let $\mathcal{K}_{kl}' = \{\{i,j\} \ | \  M_{ij}' = 0, \ i,j \in D_2 \}$ and $\hat{\rho}_n' = |\mathcal{K}_{kl}'|/{N \choose 2} $.  Further define the following quantities:
\begin{align*}
\hat{G}(x) &=\frac{1}{{N \choose 2}(1- \hat{\rho}_n)} \sum_{\{i,j\} \in \mathcal{K}_{kl}} \frac{1-\hat{p}_{ij}}{\hat{p}_{ij}} \mathbbm{1}(S_{ij} \geq x),
\\ \tilde{G}(x) &=\frac{1}{|\mathcal{K}_{kl}' |} \sum_{\{i,j\} \in \mathcal{K}_{kl}'}  \mathbbm{1}(S_{ij} \geq x).
\end{align*}

The following lemma will be instrumental to proving unconditional validity:

\begin{lemma}
\label{lemma-unconditional-weighted}
Under the conditions of Theorem \ref{theorem-graphon-missigness-unconditional}, conditional on $M_{kl} = 0$, 
\begin{align}
\label{eq-unconditional-weighted}
\left|\hat{G}(S_{kl})- \tilde{G}(S_{kl})  \right| = o_P(1).
\end{align}
\end{lemma}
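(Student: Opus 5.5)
The plan is to condition on everything except the edge-sampling noise, so that $S_{kl}$ becomes a fixed threshold, and then compare both $\hat{G}(S_{kl})$ and $\tilde{G}(S_{kl})$ to a common deterministic ``oracle'' quantity. Let $\mathcal{F}$ be the $\sigma$-field generated by $(X_m,\xi_m,\zeta_m)_{m\in[n]}$, $(\nu_{ij})$, and all data on $D_1$. Since $\hat{h}$ is trained on $D_1$ and $\hat{Y}_{ij}=\hat{h}(X_i,X_j)$ involves no network covariates, every score $S_{ij}$ with $i,j\in D_2$ is $\mathcal{F}$-measurable, and so is $S_{kl}$ once $y_{kl}=Y_{kl}$ is plugged in. Conditionally on $\mathcal{F}$, the indicators $(M_{ij})_{i<j,\, i,j\in D_2}$ are independent Bernoulli$(p_{ij})$ with $p_{ij}=\rho_n w(\xi_i,\xi_j)$. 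The event $\{M_{kl}=0\}$ involves only $\eta_{kl}$, so conditioning on it leaves the other indicators unchanged, and the same holds for the coupled $M'$. It therefore suffices to prove the claim conditionally on $\mathcal{F}$ and $M_{kl}=0$ with $x=S_{kl}$ treated as fixed, and then integrate out $\mathcal{F}$ by dominated convergence, which applies because all quantities are bounded in probability.

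Introduce the target
\begin{align*}
A(x)=\frac{1}{{N\choose 2}(1-\bar{\rho})}\sum_{N+1\le i<j\le n}(1-p_{ij})\mathbbm{1}(S_{ij}\ge x),\qquad \bar{\rho}=\frac{1}{{N\choose 2}}\sum_{N+1\le i<j\le n} p_{ij},
\end{align*}
together with the oracle $\hat{G}^{o}$, defined as $\hat{G}$ with $\hat{p}_{ij}$ replaced by $p_{ij}$ and $\hat{\rho}_n$ replaced by $\bar{\rho}$. I will then bound the three differences $\hat{G}-\hat{G}^o$, $\hat{G}^o-A$ and $A-\tilde{G}$ separately.

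For $\hat{G}^o-A$, note that $E[M_{ij}(1-p_{ij})/p_{ij}\mid\mathcal{F}]=1-p_{ij}$, so the sum over $\mathcal{H}_2$ is conditionally unbiased for $A$. Its conditional variance is at most
\begin{align*}
\frac{1}{{N\choose 2}^2}\sum \frac{(1-p_{ij})^2}{p_{ij}}\lesssim \frac{1}{n^2\rho_n},
\end{align*}
using $w\ge c$. The extra test-pair term $(1-p_{kl})/p_{kl}$ contributes $O(1/(n^2\rho_n))$. Both bounds tend to $0$ by condition (b).

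For $\tilde{G}-A$, the numerator $\sum_{i<j}(1-M'_{ij})\mathbbm{1}(S_{ij}\ge x)$ is a sum of independent bounded terms. Hoeffding's inequality puts it within $O_P(n)$ of its mean, which is $o_P(n^2)$. Likewise $|\mathcal{K}'_{kl}|/{N\choose 2}=1-\bar{\rho}+O_P(1/n)$, and $1-\bar\rho\to1$ because $\rho_n\to0$. Finally, $\hat{\rho}_n-\bar{\rho}=O_P(\sqrt{\rho_n}/n)$, so replacing $\bar{\rho}$ by $\hat{\rho}_n$ in the normalization costs $o_P(1)$, since $\hat{G}^o=O_P(1)$.

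The step I expect to be the main obstacle is $\hat{G}-\hat{G}^o$. The estimator $\hat{P}$ is built from $M$ itself, including the calibration entries, so it is not $\mathcal{F}$-measurable and the independence structure above is lost. I will avoid this by using a purely deterministic bound. By condition (a), $|1/\hat{p}_{ij}-1/p_{ij}|\le |\hat{p}_{ij}-p_{ij}|/(c^2\rho_n^2)$, and Cauchy--Schwarz then gives
\begin{align*}
|\hat{G}-\hat{G}^o|\lesssim \frac{1}{n^2\rho_n^2}\Big(\sum_{i<j} M_{ij}\Big)^{1/2}\|\hat{P}-P\|_F+o_P(1).
\end{align*}
The trailing $o_P(1)$ covers the test pair and the normalization. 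Since $\sum M_{ij}=O_P(n^2\rho_n)$, the main term is $O_P\big(\|\hat{P}-P\|_F/(n\rho_n^{3/2})\big)$. This is $o_P(1)$ by condition (c) and Markov's inequality, because condition (c) says exactly that $E\|\hat{P}-P\|_F^2/(n^2\rho_n^3)\to0$.

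The remaining check is that conditioning on $M_{kl}=0$ does not disturb this unconditional-in-$M$ bound. This holds because $P(M_{kl}=0)\to1$, so any $o_P(1)$ statement transfers to the conditional law. Combining the three pieces yields \eqref{eq-unconditional-weighted}.
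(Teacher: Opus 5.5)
Your proposal is correct and is essentially the paper's argument. It uses the same triangle-inequality split into four pieces:
\begin{itemize}
\item a weight-estimation term, controlled by Cauchy--Schwarz and condition (c) (the paper's term $B$);
\item an inverse-probability-weighted fluctuation term and a coupled-$M'$ fluctuation term, both handled conditionally on the latent variables (the paper's $C$ and $D$);
\item normalization factors (the paper's $A$ and $E$);
\item a transfer to the law given $M_{kl}=0$ via $P(M_{kl}=0)\to 1$.
\end{itemize}
The differences are cosmetic: you make the conditional centering $A(x)$ and the oracle $\hat{G}^{o}$ explicit, and you bound the IPW term with a conditional variance (Chebyshev) estimate, which needs only $n^2\rho_n\to\infty$, rather than Hoeffding's inequality.
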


By triangle inequality, we have:
\begin{align*}
\left|\hat{G}(S_{kl})- \tilde{G}(S_{kl})  \right| &\leq 
 \underbrace{\frac{1-\rho_n}{1-\hat{\rho}_n }}_{A} \cdot \underbrace{\frac{2}{{N \choose 2}(1-\rho_n)} \sum_{\{i,j\} \in \mathcal{K}_{kl}} \left|\frac{1-\hat{p}_{ij}}{\hat{p}_{ij}} - \frac{1-p_{ij}}{p_{ij}}  \right|}_{B}
 \\ & \ +  \frac{1-\rho_n}{1-\rho_n } \cdot\underbrace{\left|\frac{1}{{N \choose 2}(1-\rho_n) } \sum_{\lfloor \frac{n}{2} \rfloor+1 \leq i < j \leq n } (1-p_{ij})\mathbbm{1}(S_{ij} \geq S_{kl}) \cdot \left\{\frac{\mathbbm{1}(M_{ij} = 1)}{p_{ij}} - 1 \right\}\right|}_{C}
 \\ & \ +  \frac{1-\rho_n}{1-\hat{\rho}_n } \cdot\underbrace{\left|\frac{1}{{N \choose 2}(1-\rho_n) } \sum_{\lfloor \frac{n}{2} \rfloor+1 \leq i < j \leq n} \mathbbm{1}(S_{ij} \geq S_{kl}) \cdot \left\{\mathbbm{1}(M_{ij}' = 0)- (1-p_{ij}) \right\}\right|}_{D}
 \\ & \ + \underbrace{\left| \frac{1-\hat{\rho}_n}{1-\hat{\rho}_n'} - 1 \right|}_{E} 
 \\ &= A +B+C+D + E, \text{ say.}
\end{align*}
For $A$, we have that, conditional on $M_{kl} = 1$ and $\xi_1,\ldots,\xi_n$,
\begin{align*}
{1-\hat{\rho}_n} = \frac{1}{{n \choose 2}} \sum_{1 \leq i < j \leq n, \ (i,j) \neq (k,l)}(1-M_{ij}) \stackrel{P}{\rightarrow} 1.
\end{align*}
Since $\rho_n \rightarrow 0$, $A \stackrel{P}{\rightarrow} 1$. For $B$, we have, by Cauchy-Schwarz inequality, 
\begin{align*}
 B  &\leq  \sqrt{\frac{\rho_n}{{N \choose 2} (1-\rho_n)^2  } \sum_{1 \leq i < j \leq n} \left| \frac{\hat{p}_{ij} - p_{ij}}{\hat{p}_{ij} p_{ij}}  \right|^2} \times \sqrt{ \frac{1}{{N \choose 2} \rho_n} \sum_{\lfloor \frac{n}{2} \rfloor +1 \leq i < j \leq n} M_2(i,j)}  \\ 
  & =  \sqrt{\frac{1}{n^2 \rho_n^3}\|\hat{P} - P  \|_F^2} \times O_P(1).
\end{align*}

Now, notice that, for $n$ large enough since $P(M_{kl} = 0) \rightarrow 1$, for any $\epsilon >0$,
\begin{align*}
P\left( \frac{1}{n^2 \rho_n^3}\|\hat{P} - P  \|_F^2 > \epsilon  \ \biggr\rvert \ M_{kl} = 0 \right) &\leq 2 P\left( \left\{ \frac{1}{n^2 \rho_n^3}\|\hat{P} - P  \|_F^2 > \epsilon \right\} \cap \{M_{kl} = 0\} \right)  
\\ & \leq 2 P\left( \frac{1}{n^2 \rho_n^3}\|\hat{P} - P  \|_F^2 > \epsilon \right) \rightarrow 0.
\end{align*}

Now, for $C$, conditioning on $(X_i, \eta_i, \zeta_i)_{1 \leq i \leq n}$ and $(\nu_{ij})_{1 \leq i,j \leq n}$, only $(M_{ij})_{1 \leq i,j \leq n}$ is random.  Therefore, by Hoeffding's inequality for weighted sums, it follows that, for some $K>0$,
\begin{align*}
P(C > \epsilon \ | \ M_{kl} = 0) \leq 2\exp\left( \frac{-Kn^2 \epsilon^2}{\rho_n^2} \right).
\end{align*}
This converges to $0$ under condition (b).  Now for $D$, note that $M_{ij}'$ still has expectation $(1-p_{ij})$; therefore, by reasoning similar to the bound for $C$, this term converges in probability to $0$.  

Conditional on $\xi_1, \ldots, \xi_n$, both the numerator and denominator concentrate with high probability around their expectation. In the sparse case, the expectations converge to $1$, and thus, $E$ must converge in probability to $0$ conditional on $M_{kl} = 0$ by standard continuity arguments.   \qed
\bigskip

\noindent \textit{Proof of Theorem \ref{theorem-graphon-missigness-unconditional}}. 
Define the array $\left(V_{ij}, M_{ij}\mathbbm{1}(i,j \in D_1 ), M_{ij}'\mathbbm{1}(i,j \in D_2 ) \right)_{1 \leq i,j \leq n}$.  This array satisfies condition \ref{assumption-je1-prime}.  Moreover, under node splitting, the permutation-invariant model condition is not required since $(Y_{ij}, \hat{\mu}_{ij},(1-M_{ij}') \mathbbm{1}(i,j \in D_2 )  )_{1 \leq i < j \leq n}$ is $\Sigma_{D_2}$ exchangeable. Therefore, by analogous reasoning used in the proof of Theorem \ref{theorem-split-conformal}, it follows that:
\begin{align*}
P(Y_{kl} \in \{ y \ | \ \tilde{G}(y) > \alpha\} \ | \ M_{kl} = 0) \geq 1-\alpha.
\end{align*}
Now, for any $\epsilon > 0$, for $n$ large enough, it follows that:
\begin{align*}
& \ P(Y_{kl} \in \widehat{C}_{kl}^\mathrm{weighted} \ | \ M_{kl} = 0) \\   \geq & \  P(Y_{kl} \in \{y  \ | \ \tilde{G}(y) - |\hat{G}(y) - \tilde{G}(y)  | > \alpha\} \ | \ M_{kl} = 0) 
\\ \geq & \ P(\tilde{G}(Y_{kl})  > \alpha +\epsilon/2 \ | \ M_{kl} = 0) - P(|\hat{G}(Y_{kl}) - \tilde{G}(Y_{kl})| > \epsilon/2 \ | \ M_{kl} = 0)
\\ \geq & \ 1-\alpha-\epsilon .
\end{align*}      
\qed

\bigskip

\noindent \textit{Proof of Theorem \ref{theorem-asymptotic-conditional-validity-graphon}}.
In what follows, we prove the binary and continuous cases separately. \bigskip

\noindent \textbf{Binary Case:} \\ 
In the binary case, let $S_{ij}^* = s(Y_{ij},\tilde{Y}_{ij}; \pi) \vee (1-\alpha)$, $\tilde{S}_{ij}^* = s(Y_{ij},\hat{Y}_{ij}; \hat{\pi})$,  and $\widetilde{S}_{ij}^* = s(Y_{ij},\tilde{Y}_{ij}; \pi) $. For an arbitrary $\delta >0$ define: 
\begin{align*}
\tilde{G}^*(x) = \frac{1}{|\mathcal{K}_{kl}' |}\sum_{ \{i,j \} \in \mathcal{K}_{kl}' } \mathbbm{1}(R_{ij}^* \geq x),
\end{align*}
where for  $\{i,j\} \neq \{k,l\}$:
\begin{align*}
R_{ij}^* = ((S_{ij}^* - \delta) \mathbbm{1}(\mathcal{A}_{ij}) + (1-\alpha)\mathbbm{1}(\mathcal{A}_{ij}^c), \quad \mathcal{A}_{ij} = \{S_{ij}^* > 1-\alpha + \delta\} \cap \{\widetilde{S}_{kl}^* > 1-\alpha - \delta\}
\end{align*}
Furthermore, let:
\begin{align*}
R_{kl}^* = S_{kl}^* +\delta \mathbbm{1}(\widetilde{S}_{kl}^* >1-\alpha-\delta ).
\end{align*}

By Lemma \ref{lemma-binary-conditional}, 
for both settings $(i)$ and $(ii)$, we have $\hat{G}(S_{kl}) - \tilde{G}^*(R_{kl}^*) = o_P(1)$. It follows that:

\begin{align*}
& \ P\left( Y_{kl} \in \widehat{C}_{kl}^\mathrm{weighted} \ | \ \tilde{Y}_{kl}, M_{kl}=0 \right) \\ 
= & \ P\left( Y_{kl} \in \left\{y   \ | \  \Pi_{kl}^\mathrm{weighted} > \alpha \right\} \ | \ \tilde{Y}_{kl}, M_{kl} =0  \right)  
\\ \geq & \ P\left( Y_{kl} \in \left\{y   \ | \  \tilde{G}^*(R_{kl}^*) > \alpha + \epsilon \right\} \ | \ \tilde{Y}_{kl}, M_{kl} = 0  \right) - R_n^{(1)}
\\ = &\  P\left(Y_{kl} \in \left\{ y \ | \  R_{kl}^* \leq \hat{q}_{1-\alpha-\epsilon}^{\mathcal{K}_0'} \right\} \ | \ \tilde{Y}_{kl}, M_{kl} = 0 \right) - R_n^{(1)}
\\ \geq  & \ P\left(Y_{kl} \in \left\{ y \ | \  S_{kl}^* + \delta \mathbbm{1}(\widetilde{S}_{kl}^* > 1-\alpha -\delta)  \leq 1-\alpha \right\}  \ | \ \tilde{Y}_{kl}, M_{kl} = 0 \right) - R_n^{(1)},
\end{align*}
where $P(R_n^{(1)} > \epsilon) \rightarrow 0$ by Markov's inequality.  In what follows, let $y_{(1)} = y_{(1)}(\tilde{Y}) = \arg \min_{ y \in \{0,1\}} 
\pi_y(\tilde{Y})$.  It now follows that:

\begin{align*}
&P\left( Y_{kl} \in \widehat{C}_{kl}^\mathrm{weighted} \ | \ \tilde{Y}_{kl}, M_{kl}=0 \right) 
\\ \geq &  1- P(\{Y_{kl} = y_{(1)}\} \cap \{\pi_{y_{(1)}}(\tilde{Y}_{kl}) <\min(0.5,\alpha+\delta)\} \ | \ \tilde{Y}_{kl}, M_{kl} = 0) -R_n^{(1)} 
\\ \geq & 1-\alpha-\delta-R_n^{(1)}.
\end{align*}
Since the choice of $\delta >0$ was arbitrary, the result follows.

\qed

\noindent \textbf{Continuous Case:}

 In what follows, let $S_{ij} =|1/2 - \hat{F}_{Y|\hat{Y},M_{ij =0}}(Y_{ij} \ | \  \hat{Y}_{ij})|$, $S_{ij}^* = |1/2 - F_{Y|\tilde{Y},M_{ij =0}}(Y_{ij} \ | \  \tilde{Y}_{ij})|$. Define the quantities:
 \begin{align*}
 \hat{F}(x) &= \frac{1}{{N \choose 2}(1- \hat{\rho}_n)} \sum_{\{i,j\} \in \mathcal{K}_{kl}} \frac{1-\hat{p}_{ij}}{\hat{p}_{ij}} \mathbbm{1}(S_{ij} \leq x),
 \\ F^*(x) &= P(S_{ij}^* \leq x \ | \  \{i,j\}\text{ missing})
 \end{align*}
 For notational convenience, let $\tilde{P}(\cdot) = P(\cdot \ | \ \{k,l\} \text{ missing}, \tilde{Y}_{kl})$. In the continuous case, we have:
 \begin{align*}
 \tilde{P}( \hat{F}(S_{kl}) \leq 1-\alpha) = \tilde{P}( \hat{G}(S_{kl}) > \alpha).
 \end{align*}
 Notice that:
\begin{align*}
\tilde{P}(Y_{kl} \in \hat{C}) &= \tilde{P}(\hat{F}(S_{kl}) \leq  1-\alpha) \\ 
& \geq  \tilde{P}\left( |\hat{F}(S_{kl}) - F^*(S_{kl}^*)| + |F^*(S_{kl}) - F^*(S_{kl}^*)| + F^*(S_{kl}^*)     \leq 1-\alpha \right) \\ 
 & \geq \tilde{P}\left( F^*(S_{kl}^*) \leq 1-\alpha - \epsilon \right) - \tilde{P}\left( |\hat{F}(S_{kl}) - F^*(S_{kl}^*)| > \epsilon/2 \right) - \tilde{P}( |F^*(S_{kl}) - F^*(S_{kl}^*)| >\epsilon/2)    \\ 
 &= I - II - III, \text{ say.}
\end{align*}
By the inverse probability transform, for $I$, we have that $F^*(S_{kl}^*) \sim \mathrm{Uniform}[0,1]$ conditional on $\tilde{Y}_{kl}$, $M_{kl}= 0$.  Therefore $I = 1-\alpha - \epsilon$.  Now, for $III$, we have, by inverse transform sampling again,
\begin{align*}
F^*(x) = P\left( \left| \frac{1}{2}- \tilde{F}(Y_{ij} \ | \ \tilde{Y}_{ij})\right| \leq x \right) =2x \text{ for } x \in[0,1/2].
\end{align*}
Therefore, $F^*(x)$ is $2$-Lipschitz.  It follows that \begin{align*}
III \leq  \tilde{P}( 2|S_{kl}-S_{kl}^*| >\epsilon/2) \stackrel{P}{\rightarrow} 0
\end{align*}  
by condition (e1) or (e2), as appropriate, and Markov's inequality. $II \stackrel{P}{\rightarrow} 0$ by Lemma \ref{lemma-continuous-conditional} and Markov's inequality. \qed

\begin{lemma}
\label{lemma-binary-conditional}
Suppose that the conditions of Theorem \ref{theorem-graphon-missigness-unconditional} hold.  Moreover, suppose that condition (d1) or (d2) of Theorem \ref{theorem-asymptotic-conditional-validity-graphon} holds, as appropriate for the without- or with-network-covariates case, respectively. Then, conditional on $M_{kl} = 0$,
\begin{align}
\label{eq-conditional-binary}
\hat{G}(S_{kl}) - \tilde{G}^*(R_{kl}^*) = o_P(1).
\end{align}
\end{lemma}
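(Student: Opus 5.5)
The plan is to pass through the oracle-free comparison $\tilde{G}$ from Lemma \ref{lemma-unconditional-weighted} and then compare unweighted empirical tail functions. I would write
\begin{align*}
\hat{G}(S_{kl}) - \tilde{G}^*(R^*_{kl}) = \bigl[\hat{G}(S_{kl}) - \tilde{G}(S_{kl})\bigr] + \bigl[\tilde{G}(S_{kl}) - \tilde{G}^*(R^*_{kl})\bigr] = T_1 + T_2 .
\end{align*}
The term $T_1$ is $o_P(1)$ conditional on $M_{kl}=0$ by Lemma \ref{lemma-unconditional-weighted}. That lemma holds for any score, in particular for the truncated score (\ref{eq-score-binary}), and uses only conditions (a)--(c). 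All remaining work is on $T_2$, which compares two averages over the same index set $\mathcal{K}_{kl}'$. Both sides are bounded by $1$, and I will bound $|T_2|$ itself rather than one side.

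\emph{Mismatch step.} First I control the fraction of pairs whose estimated score is far from the oracle score:
\begin{align*}
\Delta_n(\delta) = \frac{1}{|\mathcal{K}_{kl}'|}\sum_{\{i,j\}\in\mathcal{K}_{kl}'} \mathbbm{1}\bigl(|S_{ij}-S^*_{ij}|>\delta\bigr).
\end{align*}
Since $x\mapsto \tilde{s}\vee(1-\alpha)$ is $1$-Lipschitz in $\hat{\pi}_y$ except where $\hat{\pi}_y$ crosses $0.5$, and that crossing is absorbed by the truncation when $\alpha<1/2$, we have $|S_{ij}-S^*_{ij}|\le |\hat{\pi}_1(\hat{Y}_{ij})-\pi_1(\tilde{Y}_{ij})|$. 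Markov's inequality and (d1) then give $E\Delta_n(\delta)=o(1)$ in the without-covariates case. Exchangeability of the pairs in $\mathcal{K}_{kl}'$ reduces the average to a single pair, and $|\mathcal{K}_{kl}'|/{N\choose 2}\to1$.

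In the with-covariates case the training-set statistics create dependence between $\hat{\pi}$ and the calibration pairs. I would pay for this with the factor $\rho_n^{-1}$ from passing between $M$ and the coupled $M'$ (the weights $(1-p_{ij})/p_{ij}$ are of order $\rho_n^{-1}$), which is exactly why (d2) asks for $o(\rho_n)$. The same bound gives $|S_{kl}-S^*_{kl}|\le\delta$ with probability tending to one, conditional on $M_{kl}=0$.

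\emph{Case analysis on the test point.} Work on the event $|S_{kl}-S^*_{kl}|\le\delta$.
\begin{enumerate}
\item[(i)] Suppose $\widetilde{S}^*_{kl}\le 1-\alpha-\delta$. Then $R^*_{kl}=S^*_{kl}=1-\alpha$, every $R^*_{ij}\ge 1-\alpha$, and so $\tilde{G}^*(R^*_{kl})=1$. Also $S_{kl}=1-\alpha$, which is the minimum possible truncated score, so $\tilde{G}(S_{kl})=1$ and $T_2=0$ exactly.
\item[(ii)] Otherwise $R^*_{kl}=S^*_{kl}+\delta$. By construction of $\mathcal{A}_{ij}$, the indicator $\mathbbm{1}(R^*_{ij}\ge R^*_{kl})$ equals $\mathbbm{1}(S^*_{ij}\ge S^*_{kl}+2\delta)$ up to the boundary where $S^*_{ij}\le 1-\alpha+\delta$. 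On non-mismatched pairs, $\mathbbm{1}(S_{ij}\ge S_{kl})$ and $\mathbbm{1}(S^*_{ij}\ge S^*_{kl}+2\delta)$ can disagree only if $S^*_{ij}\in[S^*_{kl}-2\delta,\,S^*_{kl}+2\delta]$ or $S^*_{ij}$ lies in the $\delta$-band above $1-\alpha$. Hence
\begin{align*}
|T_2| \le \Delta_n(\delta) + \frac{1}{|\mathcal{K}_{kl}'|}\sum_{\{i,j\}\in\mathcal{K}_{kl}'}\mathbbm{1}\bigl(S^*_{ij}\in B_\delta\bigr),
\end{align*}
where $B_\delta$ is the union of these bands, a set of Lebesgue measure at most $6\delta$.
\end{enumerate}

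\emph{Band term; this is the main obstacle.} For fixed $\delta$ the band fraction is not $o_P(1)$, yet the statement is two-sided. I would handle it in two moves.
\begin{itemize}
\item First, concentrate the band fraction around its conditional mean $P(S^*_{ij}\in B_\delta\mid M'_{ij}=0)$. I would use the dissociated-array empirical process bounds of \citet{10.1214/20-AOS1981}, since the scores are functions of i.i.d. node variables and independent edge noise; the class of interval indicators is VC.
\item Second, bound that mean by $C\delta$. This requires the conditional law of $1-\pi_1(\tilde{Y}_{ij})$ given missingness to have a bounded density away from the atom at $1-\alpha$. If this is not available, I would instead take the $\delta$-bands only around continuity points.
\end{itemize}
Finally, since $E\Delta_n(\delta)\to0$ for each fixed $\delta$, a diagonal argument produces $\delta_n\downarrow0$ slowly enough that $E\Delta_n(\delta_n)\to0$ still holds. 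Running the construction of $R^*$ with $\delta=\delta_n$ then gives $|T_2|\le o_P(1)+C\delta_n=o_P(1)$. Combined with $T_1=o_P(1)$, this yields (\ref{eq-conditional-binary}) in both settings (d1) and (d2).
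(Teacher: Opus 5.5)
There are two genuine gaps. In the without-covariates case your decomposition $T_1+T_2$ matches the paper's.

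\textbf{First gap: the band term.} You are right that the band term is not $o_P(1)$ for fixed $\delta$. In fact the two-sided reading of (\ref{eq-conditional-binary}) is false for fixed $\delta$. Take the event $\pi_{Y_{kl}}(\tilde Y_{kl})\in(\alpha,\alpha+\delta)$, which has positive probability whenever $\pi_1(\tilde Y_{kl})$ has mass near $\alpha$ or $1-\alpha$. On this event $S^*_{kl}=1-\alpha$ and $R^*_{kl}=1-\alpha+\delta$. So $\tilde G^*(R^*_{kl})$ is essentially the fraction of pairs with $S^*_{ij}\ge 1-\alpha+2\delta$, which is at most about $\alpha$. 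An accurate $\hat\pi$, by contrast, gives $S_{kl}=1-\alpha$ and $\hat G(S_{kl})\approx 1$.

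Your rescue does not repair this. Replacing $\delta$ by $\delta_n\to0$ and adding a density hypothesis proves a statement about different objects than the lemma. In the lemma $\delta>0$ is fixed, and it is sent to $0$ only at the end of the proof of Theorem \ref{theorem-asymptotic-conditional-validity-graphon}. The density hypothesis is also not available to you. Your $O(\delta)$ band bound further ignores that when $S^*_{kl}$ is within $2\delta$ of $1-\alpha$, the band contains the atom of $S^*_{ij}$ at $1-\alpha$; Lebesgue measure is not the relevant measure there.

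The missing idea is the purpose of the asymmetric shifts in $R^*$. Take a pair with $|S_{ij}-S^*_{ij}|\le\delta$, with the test point also close. Then $R^*_{ij}\ge R^*_{kl}$ forces $S^*_{ij}\ge S^*_{kl}+2\delta$, hence $S_{ij}\ge S_{kl}$. It follows that $\tilde G(S_{kl})\ge\tilde G^*(R^*_{kl})-\Delta_n(\delta)$ up to a test-point mismatch event, with no band term at all.

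This lower bound is what the paper's argument actually establishes. Its with-covariates decomposition is written explicitly as ``$\geq$'', and the absolute value in its without-covariates display is justified only in this direction. It is also the only direction used in the proof of Theorem \ref{theorem-asymptotic-conditional-validity-graphon}.

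In your case (i), note also that $|S_{kl}-S^*_{kl}|\le\delta$ only gives $S_{kl}\le 1-\alpha+\delta$. That would drop the atom at $1-\alpha$ from $\tilde G(S_{kl})$. You need closeness at the level of $\hat\pi$, or of the untruncated scores as in the paper's $\mathcal{A}_{ij}$, to get $\hat\pi_{Y_{kl}}\ge\alpha$ and hence $S_{kl}=1-\alpha$.

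\textbf{Second gap: the with-network-covariates case.} You dispose of $T_1$ by Lemma \ref{lemma-unconditional-weighted}, but that lemma is proved for $\hat Y_{ij}=\hat h(X_i,X_j)$. Its Hoeffding and coupling steps condition on the node variables and treat $\mathbbm{1}(S_{ij}\ge S_{kl})$ as fixed, which requires the scores to be independent of the mask. When $\hat Y_{ij}$ uses $\mathcal{Z}_{ij}$ computed from $M$, this fails. Inverse-probability weighting of mask-dependent scores then need not reproduce the average over missing pairs.

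Your remark about paying a factor $\rho_n^{-1}$ is attached to the wrong term. The unweighted mismatch $\Delta_n(\delta)$ over $\mathcal{K}_{kl}'$ is already $o_P(1)$ under an $o(1)$ rate.

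The paper never forms $\tilde G(S_{kl})$ in this case. It proceeds as follows:
\begin{enumerate}
\item It passes directly from $\hat G(S_{kl})$ to the IPW sum of the oracle indicators $\mathbbm{1}(R^*_{ij}\ge R^*_{kl})$ with the true $p_{ij}$ (terms $I$--$IV$).
\item It pays for replacing estimated by oracle scores inside that $\rho_n^{-1}$-weighted sum (term $IV$). This uses Cauchy--Schwarz with $E(M_{ij}/p_{ij}-1)^2=O(\rho_n^{-1})$, and it is exactly where the $o(\rho_n)$ rate of (d2) enters.
\item Only then does it apply Hoeffding and the $M\to M'$ coupling (terms $II_a$, $II_b$), to quantities that are independent of the mask given the node variables.
\end{enumerate}
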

\begin{proof}
We consider the without and with network covariate cases separately. \\  \\ 
\noindent \textbf{(i) Without Network Covariates:} \\
Consider the bound:
\begin{align*}
\hat{G}(S_{kl}) - \tilde{G}^*(R_{kl}^*)  = \hat{G}(S_{kl}) - \tilde{G}(S_{kl}) + \tilde{G}(S_{kl}) - \tilde{G}^*(R_{kl}^*) .  
\end{align*}
By Lemma \ref{lemma-unconditional-weighted}, $\hat{G}(S_{kl}) - \tilde{G}(S_{kl}) = o_P(1)$.  For the latter term, we have:  
\begin{align*}
|\tilde{G}(S_{kl}) - \tilde{G}^*(R_{kl}^*)| & \leq \frac{1-\rho_n}{1-\rho_n'} \cdot \frac{1}{{N \choose 2}(1-\rho_n)}\sum_{\lfloor \frac{n}{2}\rfloor+1 \leq i < j \leq n } [\mathbbm{1}(|\widetilde{S}_{ij}-\widetilde{S}_{ij}^*| > \delta)+ \mathbbm{1}(|\widetilde{S}_{kl}-\widetilde{S}_{kl}^*| > \delta) ].  
\end{align*}
Conditional on $M_{kl} = 0$, this term converges to $0$ by Markov's inequality and the fact that $P(M_{kl} = 0) \rightarrow 1$.  Thus, (\refeq{eq-conditional-binary}) holds under assumption $(i)$. \\ \\
\textbf{(ii) With Network Covariates:} \\ 
Consider the decomposition:
\begin{align*}
& \hat{G}(S_{kl}) - \tilde{G}^*(R_{kl}^*) \\ 
\geq &  \underbrace{\frac{1-\rho_n}{1-\hat{\rho}_n}}_I \cdot \underbrace{\frac{1}{{N \choose 2}(1- \rho_n)} \sum_{\{i,j\} \in \mathcal{K}_{kl}} \frac{1-p_{ij}}{p_{ij}} \mathbbm{1}(R_{ij}^* \geq R_{kl}^*)}_{II}
\\ -& \frac{1-\rho_n}{1-\hat{\rho}_n} \cdot \underbrace{\frac{1}{{N \choose 2}(1- \rho_n)} \sum_{\{i,j\} \in \mathcal{K}_{kl}} \left|\frac{1-\hat{p}_{ij}}{\hat{p}_{ij}} - \frac{1-p_{ij}}{p_{ij}}  \right|}_{III}
\\ -& \frac{1-\rho_n}{1-\hat{\rho}_n} \cdot \underbrace{\frac{1}{{N \choose 2}(1- \rho_n)} \sum_{ \{i,j\} \in \mathcal{K}_{kl} } \frac{1 - \hat{p}_{ij}}{\hat{p}_{ij}} [\mathbbm{1}(|\widetilde{S}_{ij}-\widetilde{S}_{ij}^*| > \delta) + \mathbbm{1}(|\widetilde{S}_{kl}-\widetilde{S}_{kl}^*| > \delta)]}_{IV}.
\end{align*}

Conditional on $M_{kl} = 0$, $I \stackrel{P}{\rightarrow} 1$ and $III \stackrel{P}{\rightarrow} 0$ by the same reasoning used in the proof of Theorem \ref{theorem-graphon-missigness-unconditional} to bound terms $A$ and $B$, respectively.  For $IV$, we have:

 \hspace*{-2.5cm}\vbox{\begin{align*}
|IV| &\leq 2 III + \frac{1-\rho_n}{1-\hat{\rho}_n} \cdot \underbrace{\left|\frac{1}{{N \choose 2}(1- \rho_n)} \sum_{\lfloor \frac{n}{2}\rfloor+1 \leq i < j \leq n } (1-p_{ij})\left(\frac{M_{ij}}{p_{ij}} -1 \right) [\mathbbm{1}(|\widetilde{S}_{ij}-\widetilde{S}_{ij}^*| > \delta) +  \mathbbm{1}(|\widetilde{S}_{kl}-\widetilde{S}_{kl}^*| > \delta) ]\right|}_{IV_a} \\  &+ \underbrace{\frac{1}{{N \choose 2}(1-\rho_n)}\sum_{\lfloor \frac{n}{2}\rfloor+1 \leq i < j \leq n } [\mathbbm{1}(|\widetilde{S}_{ij}-\widetilde{S}_{ij}^*| > \delta)+ \mathbbm{1}(|\widetilde{S}_{kl}-\widetilde{S}_{kl}^*| > \delta) ]}_{IV_b}.
\end{align*}}

\hspace*{-2.5cm}\vbox{\begin{align*}
|IV_a| \leq \sqrt{\frac{1}{{N \choose 2}(1- \rho_n)} \sum_{\lfloor \frac{n}{2}\rfloor+1 \leq i < j \leq n } \left(\frac{M_{ij}}{p_{ij}} -1 \right)^2}  \sqrt{\frac{1}{{N \choose 2}(1- \rho_n)} \sum_{\lfloor \frac{n}{2}\rfloor+1 \leq i < j \leq n } \mathbbm{1}(|\widetilde{S}_{ij}-\widetilde{S}_{ij}^*| > \delta) + \mathbbm{1}(|\widetilde{S}_{kl}-\widetilde{S}_{kl}^*| > \delta)}
\end{align*}}
The term in the square root on the LHS is $o_P(\rho_n)$ conditional on $M_{kl} = 0$ by Markov's inequality and the fact that:
\begin{align*}
& \max_{\substack{\lfloor \frac{n}{2}\rfloor+1 \leq i < j \leq n \\ \{i,j\} \neq \{k,l\} }} E\left(\frac{M_{ij}}{p_{ij}} -1 \ \biggr\rvert \ M_{kl} = 0 \right)^2
\\ \leq & \max_{\substack{\lfloor \frac{n}{2}\rfloor+1 \leq i < j \leq n \\ \{i,j\} \neq \{k,l\} }} E\left[E\left(\frac{M_{ij}}{p_{ij}} -1 \ \biggr\rvert \ \xi_i,\xi_j \right)^2 \  \biggr\rvert \ M_{kl} = 0 \right] \\
\leq & \frac{\rho_nC(1-\rho_nC)}{c \rho_n^2}.
\end{align*}
The $\{k,l\}$ term is lower-order since $n \rho_n \rightarrow \infty$ by assumption.  Similarly, $IV_b \stackrel{P}{\rightarrow} 0 $ conditional on $M_{kl} =0$ by Markov's inequality.   

Now for $II$, we have:
\begin{align*}
|II| &\leq \underbrace{\left|\frac{2}{{N \choose 2}} \sum_{\lfloor \frac{n}{2}\rfloor+1 \leq i < j \leq n } (1-p_{ij})\left(\frac{M_{ij}}{p_{ij}} -1\right)\mathbbm{1}(R_{ij}^* \geq R_{kl}^*) \right|}_{II_a}
\\ & +  \underbrace{\left|\frac{2}{{N \choose 2}} \sum_{\lfloor \frac{n}{2}\rfloor+1 \leq i < j \leq n } \left(M_{ij}' -(1-p_{ij})\right)\mathbbm{1}(R_{ij}^* \geq R_{kl}^*) \right|}_{II_b}.
\end{align*}
Observe that conditional on $(\xi_1,\zeta_1, X_1), \ldots, (\xi_n,\zeta_n, X_n)$, $(R_{ij}^*, R_{kl}^*)$ are fixed, and $(M_{ij})_{1 \leq i < j \leq n}$ are mutually independent.  Therefore, by Hoeffding's inequality for weighted sums, $II_a \stackrel{P}{\rightarrow} 0$.  $II_b \stackrel{P}{\rightarrow} 0$ by analogous reasoning. 
\end{proof}
\begin{lemma}
\label{lemma-continuous-conditional}
Suppose that the conditions of Theorem \ref{theorem-graphon-missigness-unconditional} hold.  Moreover, suppose that condition (e1) or (e2) of Theorem \ref{theorem-asymptotic-conditional-validity-graphon} holds, as appropriate for the without- or with-network-covariates case, respectively. Then, conditional on $M_{kl} = 0$,
\begin{align*}
\left| \hat{F}(S_{kl}) - F^*(S_{kl}^*)   \right|  = o_P(1).
\end{align*}

\end{lemma}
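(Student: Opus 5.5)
We prove Lemma \ref{lemma-continuous-conditional} by splitting $\hat{F}(S_{kl}) - F^*(S_{kl}^*)$ into an estimation-error part and an oracle empirical-process part, mirroring the binary-case Lemma \ref{lemma-binary-conditional}. Introduce the oracle weighted empirical CDF
\begin{align*}
F^{\dagger}(x) = \frac{1}{{N \choose 2}(1-\rho_n)} \sum_{\{i,j\} \in \mathcal{K}_{kl}} \frac{1-p_{ij}}{p_{ij}} \mathbbm{1}(S_{ij}^* \leq x),
\end{align*}
which uses the true weights $p_{ij}$ and the population scores $S^*_{ij}$. We then bound three pieces:
\begin{align*}
|\hat{F}(S_{kl}) - F^*(S_{kl}^*)| \leq |\hat{F}(S_{kl}) - F^{\dagger}(S_{kl}^*)| + \sup_x |F^{\dagger}(x) - F_{n}'(x)| + \sup_x |F_{n}'(x) - F^*(x)|.
\end{align*}
Here $F_n'$ is the unweighted empirical CDF of $(S^*_{ij})$ over the coupled missing set $\mathcal{K}_{kl}'$ defined by $M'$. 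All statements are conditional on $M_{kl}=0$. Since $P(M_{kl}=0)\to 1$, each unconditional $o_P(1)$ transfers to the conditional one, exactly as in the proof of Lemma \ref{lemma-unconditional-weighted}.

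\textbf{First term.} Replace $\hat{p}_{ij}$ by $p_{ij}$ and $\hat{\rho}_n$ by $\rho_n$. The cost is controlled by terms $A$ and $B$ in the proof of Lemma \ref{lemma-unconditional-weighted}, using conditions (a)–(c). Replacing $S_{ij}$ by $S_{ij}^*$, and $S_{kl}$ by $S^*_{kl}$, costs at most two pieces:
\begin{itemize}
\item a weighted fraction of indices with $|S_{ij}-S^*_{ij}|>\delta$;
\item the mass of $F^\dagger$ in a $2\delta$-window around $S^*_{kl}$.
\end{itemize}
Because $S^*$ has continuous law with $F^*$ being $2$-Lipschitz, the window mass is at most $4\delta + o_P(1)$ once the uniform convergence below is in place. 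Since $\delta$ is arbitrary, this suffices. The fraction with $|S_{ij}-S^*_{ij}|>\delta$ is handled as $IV$ in Lemma \ref{lemma-binary-conditional}. Without network covariates, condition (e1) with Markov's inequality gives an $o(1)$ expected fraction. With network covariates, the weights $(M_{ij}/p_{ij}-1)$ may correlate with the error indicators. We use the Cauchy–Schwarz bound of $IV_a$, where the second moment is $O(\rho_n^{-1})$, so condition (e2) at rate $o(\rho_n)$ is exactly what is needed.

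\textbf{Second and third terms.} The second term is the IPW-versus-coupled-mask comparison. Conditional on $(X_i,\xi_i,\zeta_i)_i$ and $(\nu_{ij})$, the indicators $\mathbbm{1}(S^*_{ij}\le x)$ are fixed and $M,M'$ have independent entries. Pointwise in $x$, Hoeffding's inequality gives the bound, as for terms $C,D$. Uniformity follows from monotonicity in $x$ via a finite grid. The third term is a Glivenko–Cantelli statement for the jointly exchangeable array $(S^*_{ij}, 1-M'_{ij})_{i,j\in D_2}$, normalized by $\hat{\rho}_n'$ as in term $E$. Its target is $F^*$, which is a law conditional on missingness. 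We invoke the uniform laws of large numbers for exchangeable arrays of \citet{10.1214/20-AOS1981}, applied to the class $\{\mathbbm{1}(s\le x)(1-m)\}$ and to the constant function, and take the ratio.

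\textbf{Main obstacle.} The delicate step is evaluating $F^\dagger$ at the random point $S^*_{kl}$, which is itself one of the summands and is not independent of the array. The Hoeffding/Glivenko–Cantelli bounds must therefore be uniform in $x$, which is why we insist on sup-norm control. The with-covariates rate bookkeeping for the $IV_a$-type term is the other place where care is needed.
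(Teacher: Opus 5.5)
Your proposal is correct and follows the paper's argument: your three pieces reproduce the paper's with-network-covariates decomposition. The first piece corresponds to terms $I$, $III$ and $IV$, with the Cauchy--Schwarz bound on $IV_a$ using the $O(\rho_n^{-1})$ second moment against the $o(\rho_n)$ rate in (e2); the second to $II_a$/$II_b$, handled by conditional Hoeffding made uniform in $x$; the third to $II_c$/$II_d$, handled by \citet{10.1214/20-AOS1981} and the $2$-Lipschitz property of $F^*$. The only difference is in the without-covariates case, where the paper first passes from $\hat{F}$ to the unweighted coupled CDF $\tilde{F}$ built from the estimated scores, reusing Lemma \ref{lemma-unconditional-weighted}, and then swaps $S$ for $S^*$ in an unweighted sum; you instead swap inside the IPW-weighted sum, which works there for the same reason the paper's Hoeffding step does, namely that without network covariates the $D_2$ scores do not depend on the $D_2$ mask given the latent variables.
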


\begin{proof}
We will prove an upper bound; the lower bound is analogous. We again consider separate bounds for cases not involving network covariates and cases involving network covariates.
In what follows let:
\begin{align*}
\tilde{F}(x) = \frac{1}{{N \choose 2}(1-\rho_n)} \sum_{\{i,j\} \in \mathcal{K}_{kl}' } \mathbbm{1}(S_{ij} \leq x), \quad \tilde{F}^*(x) = \frac{1}{{N \choose 2}(1-\rho_n)} \sum_{\{i,j\} \in \mathcal{K}_{kl}' } \mathbbm{1}(S_{ij}^* \leq x). 
\end{align*}
\\  
\noindent \textbf{(i) Without Network Covariates:} \\ 
Consider the decomposition:
\begin{align*}
\hat{F}(S_{kl}) - F^*(S_{kl}^*) = \hat{F}(S_{kl}) - \tilde{F}(S_{kl}) + \tilde{F}(S_{kl}) -  F^*(S_{kl}^*).  
\end{align*}
By arguments made in the proof of Lemma \ref{lemma-unconditional-weighted}, for any $\delta > 0$, the first term converges in probability to $0$ conditional on $M_{kl} = 0$.  For the second term, we have:  
\begin{align*}
\tilde{F}(S_{kl}) -  F^*(S_{kl}^*) \leq  & \ \underbrace{\tilde{F}^*(S_{kl}^*+ \delta) -  F^*(S_{kl}^* +\delta)}_{A} \\
& + \underbrace{F^*(S_{kl}^* +\delta)- F^*(S_{kl}^*)}_{B} \\ 
& + \underbrace{\frac{1-\rho_n}{1-\rho_n'} \cdot \frac{1}{{N \choose 2}(1-\rho_n)}\sum_{\lfloor \frac{n}{2}\rfloor+1 \leq i < j \leq n } [\mathbbm{1}(|S_{ij}-S_{ij}^*| > \delta)+ \mathbbm{1}(|S_{kl}-S_{kl}^*| > \delta) ]}_{C}.
\end{align*}
Conditional on $M_{kl} = 0$,  $C \stackrel{P}{\rightarrow} 0$ by arguments made in Lemma \ref{lemma-binary-conditional}. 

Now, for $A$, observe that $(S_{ij}^*, M_{ij})_{i,j \in \mathbb{N}}$ is a jointly exchangeable and diassociated array satisfying Assumption 1 of \citet{10.1214/20-AOS1981}.  Moreover, $\mathcal{F} = \{\mathbbm{1}( \{x\leq t\}\ \cap \{y=1 \}) \ \ | \ t \in \mathbb{R}\}$ is a VC class satisfying Assumption 3(i) of the above reference.  Therefore, it follows that:
\begin{align*}
A\leq \sup_{x \in \mathbb{R}} |\tilde{F}^*(x) -F^*(x)| \stackrel{P}{\rightarrow} 0.   
\end{align*}

For $B$, observe that, since $P(M_{kl} = 0) \rightarrow 1$, 
\begin{align*}
  B &\leq 2 \sup_{x \in \mathbb{R}}\{P( \{S_{ij}^* \leq x + \delta\} \cap \{M_{ij} = 0\}) -  P( \{S_{ij}^* \leq x\} \cap \{M_{ij} = 0\})\} \\ 
  & \leq   2\sup_{x \in \mathbb{R}} P( x \leq S_{ij}^* \leq x + \delta) \leq 4\delta.
\end{align*}
where the last inequality follows since $S_{ij}^* \sim \mathrm{Uniform}[0,1/2]$. Since $\delta $ can be chosen so that this term is arbitrarily small, $B \rightarrow 0$.  The result follows. \qed

\noindent \textbf{(ii) With Network Covariates:}
Consider a representation that is analogous to the binary case with network covariates: 
\begin{align*}
&\frac{1}{{N \choose 2}(1- \hat{\rho}_n)} \sum_{\{i,j\} \in \mathcal{K}_{kl}} \frac{1-\hat{p}_{ij}}{\hat{p}_{ij}} \mathbbm{1}(S_{ij} \leq x) - F^*(x) \\ 
 \leq & \underbrace{\frac{1-\rho_n}{1-\hat{\rho}_n}}_I \cdot \underbrace{\frac{1}{{N \choose 2}(1- \rho_n)} \sum_{\{i,j\} \in \mathcal{K}_{kl}} \frac{1-p_{ij}}{p_{ij}} \mathbbm{1}(S_{ij}^* \leq x+\delta) - F^*(x)}_{II}
\\ +& \frac{1-\rho_n}{1-\hat{\rho}_n} \cdot \underbrace{\frac{1}{{N \choose 2}(1- \rho_n)} \sum_{\{i,j\} \in \mathcal{K}_{kl}} \left|\frac{1-\hat{p}_{ij}}{\hat{p}_{ij}} - \frac{1-p_{ij}}{p_{ij}}  \right|}_{III}
\\ +& \frac{1-\rho_n}{1-\hat{\rho}_n} \cdot \underbrace{\frac{1}{{N \choose 2}(1- \rho_n)} \sum_{\{i,j\} \in \mathcal{K}_{kl}} \frac{1-\hat{p}_{ij}}{\hat{p}_{ij}}[\mathbbm{1}(|S_{ij}-S_{ij}^*| > \delta) + \mathbbm{1}(|S_{kl}-S_{kl}^*| > \delta) ] }_{IV}
\end{align*}
The terms $I, III$, and  $IV$, may be bounded using the same arguments used in the proof of the binary case of Theorem \ref{theorem-asymptotic-conditional-validity-graphon}.  For $II$, consider the bound:
\begin{align*}
II &\leq \underbrace{\frac{1}{{N \choose 2}(1- \rho_n)} \sum_{\lfloor \frac{n}{2}\rfloor+1 \leq i < j \leq n} (1-p_{ij})\left(\frac{M_{ij}}{p_{ij}}-1\right) \mathbbm{1}(S_{ij}^* \leq x+\delta)}_{II_a}
\\ & + \underbrace{\frac{1}{{N \choose 2}(1- \rho_n)} \sum_{\lfloor \frac{n}{2}\rfloor+1 \leq i < j \leq n} \left(1-p_{ij} - M_{ij}'\right) \mathbbm{1}(S_{ij}^* \leq x+\delta)}_{II_b}
\\ &+ \underbrace{\frac{1}{{N \choose 2}(1- \rho_n)} \sum_{\lfloor \frac{n}{2}\rfloor+1 \leq i < j \leq n} \mathbbm{1}(\{S_{ij}^* \leq x+\delta\} \cap \{M_{ij}' = 0\}) - F^*(x+\delta)}_{II_c}
\\ &+ \underbrace{F^*(x+\delta)- F^*(x)}_{II_d}.
\end{align*}
Observe that, conditional on $(E,X,\xi)$ and $(\eta_{ij})_{i,j \in D_1}$, the only random term is $ (M_{ij})_{i,j\in D_2}$.  Moreover, the vector $( \mathbbm{1}(S_{ij}^*\leq x ))_{ \lfloor \frac{n}{2}\rfloor+1 \leq i < j \leq n}$ indexed by $x \in \mathbb{R}$ takes at most ${N \choose 2}$ different values.  Thus, by a union bound and Hoeffding's inequality,
\begin{align*}
P\left(\sup_{x \in \mathbb{R}}|II_a| > \epsilon \ | \ M_{kl}=0\right) \leq {N \choose 2} \times \exp\left( \frac{-4{ N \choose 2} \left(\epsilon+\frac{1}{{N \choose 2}}\right)^2}{ \rho_n^2 c^2 }  \right) \rightarrow 0.
\end{align*}
$II_b \stackrel{P}{\rightarrow} 0$  conditional on $M_{kl} = 0$ by similar reasoning.  $II_c \stackrel{P}{\rightarrow} 0$ and $II_d\stackrel{P}{\rightarrow} 0$ by arguments made to bound the terms $A$ and $C$, respectively, in the ``without network covariate'' case.  The result follows.

\end{proof}

\bigskip

\noindent \textit{Proof of Proposition \ref{prop-selective-coverage}.}
Let $\tilde{T}_{kl} = \mathbbm{1}(\eta_{kl} \leq f(\tilde{Y}_{kl}))$.   We will first show that:
\begin{align}
\label{eq-bernoulli-diff}
P(T_{kl} \neq \tilde{T}_{kl}) \rightarrow 0.
\end{align}
Let $D_f$ denote the set of discontinuity points of $f$ and $m$ denote the number of such discontinuities.  Observe that, for any $\epsilon >0$,
\begin{align*}
& \ P(T_{kl} \neq \tilde{T}_{kl}) \\ 
\leq & \  2 \left[P([\tilde{Y}_{kl} - \epsilon, \tilde{Y}_{kl} + \epsilon] \cap D_f \neq \emptyset) + P(|\hat{Y}_{kl} - \tilde{Y}_{kl}| > \epsilon/L) + P(f(\tilde{Y}_{kl}) - \epsilon/L \leq \gamma_{kl} \leq f(\tilde{Y}_{kl}) + \epsilon/L) \right].
\end{align*}

For the first term, we have:
\begin{align*}
P([\tilde{Y}_{kl} - \epsilon, \tilde{Y}_{kl} + \epsilon] \cap D_f \neq \emptyset) &=  P\left(\bigcup_{x \in D_f} \tilde{Y}_{kl} \in [x-\epsilon, x+\epsilon]\right)
\\ &\leq 2Km\epsilon,
\end{align*}
where $K$ is supremum of the density of $\tilde{Y}_{kl}$.  The second term converges to $0$ since $\hat{Y}_{kl} - \tilde{Y}_{kl} = o_P(1)$ and the third term can be made arbitrarily small by anti-concentration properties of the uniform distribution. Thus, (\ref{eq-bernoulli-diff}) holds.

Now, observe that the term $P(\{M_{2}(k,l) = 0\} \cap \{\tilde{T}_{kl} =1 \})$ satisfies the following lower bound:
\begin{align*}
& P(\{M_{2}(k,l) = 0\} \cap \{\tilde{T}_{kl} =1 \})
\\ = & E[P(M_2(k,l) = 0 \ | \ \xi_k, \xi_l) \cdot P( T_{kl} = 1 \ | \ \tilde{Y}_{kl}) ]
\\ \geq & (1-\rho_n C) \gamma \delta_0. 
\end{align*}
Therefore, for $n$ large enough, it follows that:

\hspace*{-1cm}\vbox{\begin{align*}
& \left| P( Y_{kl} \in \widehat{C}_{kl}^\mathrm{weighted} \ | \ M_2(k,l) = 0, T_{kl} = 1) - P( Y_{kl} \in \widehat{C}_{kl}^{\mathrm{weighted}} \ | \ M_2(k,l) = 0, \tilde{T}_{kl} = 1)   \right| 
\\ \leq &\frac{8 P( \{\tilde{T}_{kl} = 1\} \triangle \{T_{kl} = 1\})}{\gamma^2 \delta_0^2} \rightarrow 0. 
\end{align*}}
where $\triangle$ is the symmetric set difference operator. Moreover, define the event:
\begin{align*}
\mathcal{A}_\delta = \{ P( \{Y_{kl} \in \widehat{C}_{kl}^{\mathrm{weighted}} \ | \ M_{2}(k,l) = 0, \tilde{Y}_{kl}) \geq 1-\alpha-\delta \}.
\end{align*}
By Theorem \ref{theorem-asymptotic-conditional-validity-graphon}, $P(A_\delta) \rightarrow 1$.   Therefore,  
\begin{align*}
& \ P( \{Y_{kl} \in \widehat{C}_{kl}^{\mathrm{weighted}} \} \cap \{M_2(k,l) = 0\} \cap \{T_{kl} = 1\}) \\
= & \ E[P( \{Y_{kl} \in \widehat{C}_{kl}^{\mathrm{weighted}} \ | \ M_{2}(k,l) = 0, \tilde{Y}_{kl}) P(M_{2}(k,l) = 0 \ | \ \tilde{Y}_{kl}) f(\tilde{Y}_{kl})] \\ 
\geq & \ (1-\alpha-\delta) E_{\mathcal{A}_\delta}[P(M_{2}(k,l) = 0 \ | \ \tilde{Y}_{kl}) f(\tilde{Y}_{kl})].
\end{align*}
Since $E_{\mathcal{A}_\delta}[P(M_{2}(k,l) = 0 \ | \ \tilde{Y}_{kl}) f(\tilde{Y}_{kl})] \rightarrow E[P(M_{2}(k,l) = 0 \ | \ \tilde{Y}_{kl}) f(\tilde{Y}_{kl})]$ by Dominated Convergence Theorem, it follows that, for $n$ large enough:
\begin{align*}
P( Y_{kl} \in \widehat{C}_{kl}^{\mathrm{weighted}} \ | \ M_2(k,l) = 0, \tilde{T}_{kl} = 1) & \geq (1-\alpha-\delta) \frac{E_{\mathcal{A}_\delta}[P(M_{2}(k,l) = 0 \ | \ \tilde{Y}_{kl}) f(\tilde{Y}_{kl})]}{E[P(M_{2}(k,l) = 0 \ | \ \tilde{Y}_{kl}) f(\tilde{Y}_{kl})]} \\ 
& \geq 1-\alpha- 2\delta.
\end{align*}
The first claim follows. 

For the second claim, first observe that, by Markov's inequality:
\begin{align*}
& P\left( \left|\frac{1}{{N \choose 2}}\sum_{\lfloor \frac{n}{2}\rfloor +1 \leq k < l \leq n} \mathbbm{1}( \{M_{2}(k,l) = 0\} \cap \{T_{kl}=1\} ) - \mathbbm{1}( \{M_{2}(k,l) = 0\} \cap \{\widetilde{T}_{kl}=1\} ) \right| > \epsilon \right) 
\\ \leq & \frac{P(T_{kl} \neq \tilde{T}_{kl})}{\epsilon} \rightarrow 0.
\end{align*}
Therefore, for the denominator, we have:
\begin{align*}
 &\frac{1}{{N \choose 2}}\sum_{\lfloor \frac{n}{2}\rfloor +1 \leq k < l \leq n} \mathbbm{1}( \{M_{2}(k,l) = 0\} \cap \{T_{kl}=1\} ) \\ = & \ \frac{1}{{N \choose 2}}\sum_{\lfloor \frac{n}{2}\rfloor +1 \leq k < l \leq n} \mathbbm{1}( \{M_{2}(k,l) = 0\} \cap \{\tilde{T}_{kl}=1\} )  + o_P(1)
 \\ = & \ E[P( M_{2}(k,l) = 0 \ | \  \tilde{Y}_{kl}) \cdot P(\tilde{T}_{kl}=1 \ | \ \tilde{Y}_{kl} )] + o_P(1),
\end{align*}
where the last line follows from the fact that $(M_{kl}, \tilde{Y}_{kl})_{1 \leq i,j \leq n}$ is a disassociated array, and consequently, Hoeffding's inequality for U-statistics may be applied since this term permits an analogous representation as a sum over permutations involving independent blocks.   

For the numerator, we prove the continuous case; the binary case can be proved using analogous reasoning. Using the notation in the proof of Theorem \ref{theorem-asymptotic-conditional-validity-graphon}, we have:
\begin{align*}
& \ \frac{1}{{N \choose 2}}\sum_{\lfloor \frac{n}{2}\rfloor +1 \leq k < l \leq n} \mathbbm{1}\left( \left\{Y_{kl} \in \widehat{C}_{kl}^{\mathrm{weighted}}\right\} \cap \{M_{2}(k,l) = 0\} \cap \{T_{kl}=1\} \right)
\\ \geq & \   \underbrace{\frac{1}{{N \choose 2}}\sum_{\lfloor \frac{n}{2}\rfloor +1 \leq k < l \leq n} \mathbbm{1}\left( \left\{ F^*(S_{kl}^*) \leq 1-\alpha - \delta \right\} \cap \{M_{2}(k,l) = 0\} \cap \{\tilde{T}_{kl}=1\} \right)}_{A} 
\\ & - \underbrace{\frac{1}{{N \choose 2}}\sum_{\lfloor \frac{n}{2}\rfloor +1 \leq k < l \leq n} \mathbbm{1}( |\hat{F}(S_{kl})-F^*(S_{kl}^*)| > \delta)}_B + o_P(1). 
\end{align*}
For $B$, observe that $P(|\hat{F}(S_{kl}) - F^*(S_{kl}^*)| > \delta) = o(1)$ by Lemma \ref{lemma-continuous-conditional}. Therefore, $B= o_P(1)$ by Markov's inequality.  For $A$, notice that the summands are again functions of a disassociated array, and concentrate around their expectation by a variant of Hoeffding's inequality for U-statistics.  Moreover, by the argument made in the proof of Theorem \ref{theorem-asymptotic-conditional-validity-graphon}, we have: 
\begin{align*}
P( F^*(S_{kl}^*) \leq 1-\alpha -\delta \ | \  M_{2}(k,l) = 0,  \tilde{Y}_{kl} ) = 1-\alpha-\delta. 
\end{align*}
Therefore,
\begin{align*}
A \geq  (1-\alpha-\delta) \cdot E[P(M_2(k,l) =0 \ | \ \tilde{Y}_{kl})\cdot P(\tilde{T}_{kl} \ | \ \tilde{Y}_{kl})] + o_P(1).
\end{align*}
The claim now follows after combining bounds for the numerator and denominator and using the fact that the denominator is bounded away from $0$.  
\qed

\bigskip

\noindent \textit{Proof of Proposition \ref{prop-kernel-ipw}}.
It suffices to show that:
\begin{align*}
\frac{\frac{1}{{N \choose 2}(1-\hat{\rho}_n)h } \sum_{\{i,j\} \in \mathcal{H}_{1}} \frac{1-\hat{p}_{ij}}{\hat{p}_{ij}} K\left(\frac{|\hat{Y}_{ij} - \tilde{y}|}{h} \right)}{\frac{1}{{N \choose 2}(1-\rho_n)h } \sum_{1 \leq i < j \leq \lfloor \frac{n}{2} \rfloor } K\left(\frac{|\tilde{Y}_{ij} - \tilde{y}|}{h} \right)} &= 1+o_P(1),
\end{align*}
\hspace*{-2.5cm}\vbox{\begin{align*}
\left| \frac{1}{{N \choose 2}(1-\hat{\rho}_n)h} \sum_{\{i,j\} \in \mathcal{H}_{1}} \frac{1-\hat{p}_{ij}}{\hat{p}_{ij}} K\left(\frac{|\hat{Y}_{ij} - \tilde{y}|}{h} \right)y_{ij} - \frac{1}{{N \choose 2}(1-\rho_n)h} \sum_{1 \leq i < j \leq \lfloor \frac{n}{2} \rfloor } K\left(\frac{|\tilde{Y}_{ij} - \tilde{y}|}{h} \right)y_{ij} \right|
&= o_P(1).
\end{align*}}
Both statements can be proven using arguments used to prove previous results. In what follows, let $L$ denote the Lipschitz constant associated with the kernel. 

\noindent \textbf{(i) Without Network Covariates:} \\ 
Consider the decomposition:
\begin{align*}
& \frac{1}{{N \choose 2}(1-\hat{\rho}_n)} \sum_{\{i,j\} \in \mathcal{H}_{1}} \frac{1-\hat{p}_{ij}}{\hat{p}_{ij}} K\left(\frac{|\hat{Y}_{ij} - \tilde{y}|}{h} \right)y_{ij}  \\
\leq & \frac{1-\rho_n'}{1-\hat{\rho}_n} \cdot \frac{1}{|\mathcal{H}_1' |} \sum_{\{i,j\} \in \mathcal{H}_{1}'} K\left(\frac{|\tilde{Y}_{ij} - \tilde{y}|}{h} \right)y_{ij} \\ 
 & + \frac{1-\rho_n}{1-\hat{\rho}_n} \cdot  \frac{B}{{N \choose 2}(1-\rho_n)} \sum_{\{i,j\} \in \mathcal{H}_{1}} \left|\frac{\hat{p}_{ij}-p_{ij}}{\hat{p}_{ij}p_{ij}}\right| \\
 \ & + \  \frac{1-\rho_n}{1-\hat{\rho}_n} \cdot \frac{1}{{N \choose 2}(1-\rho_n)} \sum_{1 \leq i<j \leq \lfloor \frac{n}{2} \rfloor} (1-p_{ij})\left(\frac{M_{ij}}{p_{ij}} -1\right) K\left(\frac{|\hat{Y}_{ij} - \tilde{y}|}{h} \right)y_{ij} \\
 \ & + \  \frac{1-\rho_n}{1-\hat{\rho}_n} \cdot \frac{1}{{N \choose 2}(1-\rho_n)} \sum_{1 \leq i<j \leq \lfloor \frac{n}{2} \rfloor} \{(1-p_{ij})-M_{ij}'\}K\left(\frac{|\hat{Y}_{ij} - \tilde{y}|}{h} \right)y_{ij}
\\ &+  \frac{1-\rho_n}{1-\hat{\rho}_n} \cdot \frac{1}{{N \choose 2}(1-\rho_n)} \sum_{1 \leq i<j \leq \lfloor \frac{n}{2} \rfloor} \frac{L\left| \hat{Y}_{ij} - \tilde{Y}_{ij} \right|}{h}.
\end{align*}
This decomposition is analogous to the one used in Lemma \ref{lemma-unconditional-weighted}; it is clear that the second part of the proposition holds repeating similar arguments.   

\noindent \textbf{(ii) With Network Covariates:} \\ 
Consider the decomposition:
\begin{align*}
& \frac{1}{{N \choose 2}(1-\hat{\rho}_n)} \sum_{\{i,j\} \in \mathcal{H}_{1}} \frac{1-\hat{p}_{ij}}{\hat{p}_{ij}} K\left(\frac{|\hat{Y}_{ij} - \tilde{y}|}{h} \right)y_{ij}  \\ 
\leq \ & \ \frac{1-\rho_n}{1-\hat{\rho}_n} \cdot \frac{1}{{N \choose 2}(1-\rho_n)} \sum_{\{i,j\} \in \mathcal{H}_{1}} \frac{1-p_{ij}}{p_{ij}} K\left(\frac{|\tilde{Y}_{ij} - \tilde{y}|}{h} \right)y_{ij} 
\\ & + \frac{1-\rho_n}{1-\hat{\rho}_n} \cdot \frac{1}{{N \choose 2}(1-\rho_n)} \sum_{\{i,j\} \in \mathcal{H}_{1}} \left|\frac{1-\hat{p}_{ij}}{\hat{p}_{ij}} - \frac{1-p_{ij}}{p_{ij}}\right|
\\ & + \frac{1-\rho_n}{1-\hat{\rho}_n} \cdot \frac{1}{{N \choose 2}(1-\rho_n)} \sum_{\{i,j\} \in \mathcal{H}_{1}} \frac{1-\hat{p}_{ij}}{\hat{p}_{ij}} \frac{L\left| \hat{Y}_{ij} - \tilde{Y}_{ij} \right|}{h}.
\end{align*}
This representation is analogous to the one used in the proofs of Lemmas \ref{lemma-binary-conditional} and Lemma \ref{lemma-continuous-conditional}. Repeating similar reasoning, the result follows.

\subsection{Additional Experimental Details}
\subsubsection{Comparing Splitting Mechanisms}
\label{sec-ex1-appendix}
In this section, we provide additional details for the comparison of node splitting, edge splitting, and selected column methods discussed in Section \ref{sec-ex1}. For this and the following section, the test element $Y_{ij}$ is observed.

The node covariates for each node $i$ are generated as:
\begin{align*}
\mathbf{X}_i = 
\begin{bmatrix}
X_{i1} \\ X_{i2} \\ X_{i3} \\ X_{i4}
\end{bmatrix}
\sim \mathcal{N} \left(\begin{bmatrix}
1 \\ 3 \\ 0 \\ 0
\end{bmatrix}, \begin{bmatrix}
    1 & 0.3 & 0.7 & 0.6 \\
    0.3 & 4 & 0.5 & 0.8 \\
    0.7 & 0.5 & 1 & 0.5 \\
    0.6 & 0.8 & 0.5 & 1
\end{bmatrix}
\right),
\end{align*}
where $X_{i1}$ and $X_{i2}$ are the observable covariates for fitting the model, and $X_{i3}$ and $X_{i4}$ are used to construct the latent positions:
\begin{align*}
\xi_{i1} = \alpha_3\frac{|X_{i3}|}{\|(X_{i3},X_{i4})\|_2},
\quad
\xi_{i2} = \alpha_4\frac{|X_{i4}|}{\|(X_{i3},X_{i4})\|_2},
\end{align*}
where $\alpha_3, \alpha_4 \sim U[0.8,1]$ are the feature amplifying factors. This transformation maps $[X_{i3},X_{i4}]$ to the first quadrant of the unit circle and rescales the two dimensions by $\alpha_3$ and $\alpha_4$, so that the latent positions $[\xi_{i1}, \xi_{i2}]$ lie on an ellipse restricted to the first quadrant, with semi-axes being $\alpha_3$ and $\alpha_4$. 



The response $Y_{ij}$ and the masking matrix $M_{ij}$ are as defined in Section \ref{sec-ex1}. The constants $c_1=0.9$ and $c_2 = 0.25$ in \eqref{eq-exp-compare-split} are chosen so that the observable sigmoid signal ($P_{sigmoid}$) and latent linear signal ($P_{linear}$) have approximately equal power, and the signal-to-noise ratio is approximately $20$, i.e. $\frac{P_{sigmoid}}{P_{linear}}\approx 1$ and $SNR=\frac{P_{sigmoid}+P_{linear}}{P_{noise}}\approx 20$, where the power of a component $S$ is defined as $P_{S} = \sum_{i=1,j=1}^{n} S_{ij}^2/n^2$. The responses are standardized and rescaled to a standard deviation of $4$ across all synthetic examples.

We now describe the splitting mechanisms in more detail. For node splitting, the nodes are partitioned into $D_1 = \{1, \ldots, \lfloor n/2 \rfloor\}$ and $D_2 = \{\lfloor n/2 \rfloor + 1, \ldots, n\}$. The training set consists of observed entries among pairs in $D_1$, and the calibration set consists of observed entries among pairs in $D_2$. The test point is $Y_{n-1,n}$, and we only look at the cases where the test point is observed. For edge splitting, the observed entries are randomly partitioned into two disjoint sets, with the test point again being $Y_{n-1,n}$. Since $Y$ and $M$ are symmetric, for both node splitting and edge splitting methods, we use only the upper-triangular entries with $i<j$.

For the selected column method, the calibration set consists of the observed entries in the column with the highest degree, computed after removing the corresponding row with the highest degree to avoid information leakage. The test point is the entry in the $(n-1)$-th row of the selected column, and we again only consider the cases where it is observed. The training set consists of all remaining observed entries, excluding the selected column and its corresponding row. For this method, we use the full array rather than restricting to upper-triangular entries, so that the test point exists in the array. 

\subsubsection{Row-Column Approach} 
\label{sec-ex2-appendix}
In this section, we provide additional details for the row-column approach discussed in Section \ref{sec-ex2}, together with its comparison with the standard node splitting and edge splitting methods. The data-generating process, including the basic, asymmetric, and heteroscedastic cases, is described in Section \ref{sec-ex2}.

We define two sets used in the calibration procedure. The reference set consists of all observed entries sharing a row or column with any entry in the calibration set, including the calibration set itself, and is used for computing the row and column ranks of the prediction error on the calibration set. The update set consists of all entries in the reference set that share a row or column with the test point; only the row and column ranks in the intersection of the update set and the calibration set are recomputed.

Suppose that the index of the test point is pre-specified as $(k,l)$. In the following, we describe the calibration procedure for the basic case. For the asymmetric and heteroscedastic cases, the calibration procedures are identical to the basic case, except that the training and calibration sets include all off-diagonal entries rather than only upper-triangular ones. 

To construct the prediction interval, we scan through candidate values for the absolute residual $S_{kl} = |Y_{kl} - \boldsymbol{\hat{\mu}}|$. Since only the row and column ranks of entries that share a row or column with the test point are affected by the value of $S_{kl}$, the rank pairs can only change when $S_{kl}$ crosses another score in the update set. The candidate set therefore consists of the absolute residuals of the $m$ entries in the update set, excluding the test element, in descending order:
\[ S_{i_1 j_1} >S_{i_2 j_2}  > \cdots >S_{i_{m} j_{m}}.\]
Starting from a value larger than $ S_{i_1 j_1} $, we set $S_{kl}$ to this candidate value, and compute the row and column ranks for each entry $S_{ij}$ in the calibration set:

\begin{align*}
 R_{ij} &= \frac{1}{|\mathcal{H}_2(i,\cdot)|}\sum_{(i,j') \in \mathcal{H}_2} \mathbbm{1}(\{S_{ij'} \geq S_{ij} \}  \cap \left\{(k,l) \in \mathcal{H}_2 \right\}) \\
  C_{ij} &= \frac{1}{|\mathcal{H}_2(\cdot,j)|}\sum_{(i',j) \in \mathcal{H}_2} \mathbbm{1}(\{S_{i'j} \geq S_{ij} \}  \cap \left\{ (k,l) \in \mathcal{H}_2 \right\}),
\end{align*}
where $\mathcal{H}_2$ denotes the full (unrestricted) calibration set, which also serves as the reference set. Let $\mathcal{H}'_2$ denote the restricted calibration set that only includes the upper diagonal elements. The row and column pairs $\mathcal{U} = (R_{ij}, C_{ij})_{i,j\in \mathcal{H}'_2}$ are then mapped to the grid $\mathcal{A} = (a_{ij})_{(i,j)\in\mathcal{H}'_2}$ on the unit circle translated to the first quadrant via optimal transport, where the target distribution is generated using the good lattice point method of \cite{Fang1993NumbertheoreticMI} with generating vector $(n;1,89)$. 

The nonconformity score of each entry in the calibration set is defined as $S(\mathcal{U}_{ij}) = \| a_{ij}\|_2$. Next, we examine whether the following quantity
\begin{align*}
\check{\Pi}_{kl}^{\mathrm{row-col}} = \frac{1}{|\mathcal{H}'_2|}\sum_{(i,j) \in \mathcal{H}'_2}\mathbbm{1}(S(\mathcal{U}_{ij}) \geq S(\mathcal{U}_{kl}))
\end{align*}
exceeds $\alpha$. If $\check{\Pi}_{kl} > \alpha$, the current candidate determines the prediction interval:
\begin{align*}
\widehat{C}^{\mathrm{row-col}}_{kl} = \{ y_{kl} \mid \check{\Pi}_{kl}^{\mathrm{row-col}}>\alpha \}.
\end{align*}
Otherwise, we proceed to the next candidate in descending order and repeat the procedure until $\check{\Pi}_{kl} > \alpha$ first occurs. Since $m\leq n-1$, at most $n-1$ candidates need to be examined. With node splitting, the update set contains at most $n$ entries. 

We note that the order of the candidate values is set to be descending, as scanning in descending order substantially reduces the number of candidates that need to be examined in practice. In a special case of link prediction, the edge response is binary and the conformal prediction set can only take one of the three possible values: $\{0\}$, $\{1\}$ or $\{0,1\}$. Therefore, at most three candidate values need to be examined, making the row-column approach particularly efficient for link prediction problems.

We now briefly describe the row-column approach with edge splitting. In the edge splitting method, the observed entries are randomly partitioned into two disjoint sets $\mathcal{H}_1$ and $\mathcal{H}_2$. The training set consists of observed entries $(i,j) \in \mathcal{H}_1$, and the calibration set consists of observed entries $(i,j) \in \mathcal{H}_2$. For the basic case, we again restrict to $i<j$, while all off-diagonal entries are included for the asymmetric and heteroscedastic cases. The reference set and update sets are defined in the same way as the node splitting case, excluding the entries from the training set. 

The calibration procedure is identical to the one described above, with the update set containing at most $2n-1$ entries.

We report the results for the row-column approach with edge splitting ($\rho_n = 0.4$) with $n=200$, $\alpha =0.1$ and $500$ iterations in Table \ref{tab-ex2-compare-row-column-edge}. The row-column approach produces narrower prediction intervals than the standard splitting methods in the asymmetric and heteroscedastic cases, while the two are similar in the basic case. The coverage of the edge splitting method is more variable, which is likely due to the sparsity of the array at $\rho_n = 0.4$.

\begin{table}[ht]
\centering
\begin{tabular}{ |c|c|c|c|c| }
 \hline
 Case & Method & Model & Coverage & Width \\
 \hline
 \multirow{4}{*}{1}
     & \multirow{2}{*}{Standard} & Linear & 0.906 & 9.21 \\
     &                           & RF     & 0.892 & 7.65 \\
 \cline{2-5}
     & \multirow{2}{*}{Row-Column} & Linear & 0.898 & 9.40 \\
     &                             & RF     & 0.938 & 7.67 \\
 \hline
 \multirow{4}{*}{2}
     & \multirow{2}{*}{Standard} & Linear & 0.916 & 9.68 \\
     &                           & RF     & 0.888 & 7.39 \\
 \cline{2-5}
     & \multirow{2}{*}{Row-Column} & Linear & 0.886 & 9.60 \\
     &                             & RF     & 0.874 & 7.23 \\
 \hline
 \multirow{4}{*}{3}
     & \multirow{2}{*}{Standard} & Linear & 0.906 & 8.98 \\
     &                           & RF     & 0.902 & 7.34 \\
 \cline{2-5}
     & \multirow{2}{*}{Row-Column} & Linear & 0.898 & 8.90 \\
     &                             & RF     & 0.858 & 7.02 \\
 \hline
\end{tabular}
\caption{Comparison of standard conformal prediction sets with the row-column approach under edge splitting ($\rho = 0.4$). The underlying model ranges from (1) basic (symmetric and homoscedastic), (2) asymmetric, and (3) heteroscedastic.}
\label{tab-ex2-compare-row-column-edge}
\end{table}

\subsubsection{Missing Elements}
\label{sec-ex3-appendix}
In this section, we provide additional details for the weighted conformal prediction of a missing element in the array, as discussed in Section \ref{sec-ex3}. The data-generating process is defined as in Section \ref{sec-ex3}, with node covariates generated as in Section \ref{sec-ex1-appendix}.

For the setting of the weighted conformal prediction with the node splitting method, the detailed splitting mechanism is as follows. The nodes are partitioned into $D_1 = \{1, \ldots, \lfloor n/2 \rfloor\}$ and $D_2 = \{\lfloor n/2 \rfloor + 1, \ldots, n\}$. The training set consists of observed entries with indices $(i,j)$, $i,j \in D_1$ and $i<j$, and the calibration set consists of observed entries among pairs in $D_2$. The test point is a missing entry $Y_{n-1,n}$ with $M_{n-1,n} = 0$. 


In addition, we present the averaged scaled Frobenius error of the USVT and NS methods together with the coverage and width for $n=500$, $\alpha = 0.1$, and $300$ iterations given sparsity level $\rho_n$ ranging from $2n^{-0.8}$ to $\min\{ 2n^{-0.1},1\}$ in Figure \ref{fig-ex3-dependency-innerprod-graphon}, and analyze the behavior of the two estimation methods. The estimated probability matrices from the USVT and NS methods are $\hat{P}^{USVT}$ and $\hat{P}^{NS}$, respectively. The scaled Frobenius error is defined as $\| P-\hat{P} \|_{F}/\| P \|_F$ and is averaged over $300$ iterations for each sparsity level $\rho_n$. The USVT method gives a more precise estimation for very sparse graphs, which coincides with the coverage of its conformal prediction interval being closer to the nominal level $0.9$. On the other hand, we also observe that for moderate to dense graphs $\rho_n \geq n^{-0.7}$, the NS method tends to produce narrower prediction intervals. 

\begin{figure}[ht]
\centering
    \includegraphics[width=0.8\linewidth]{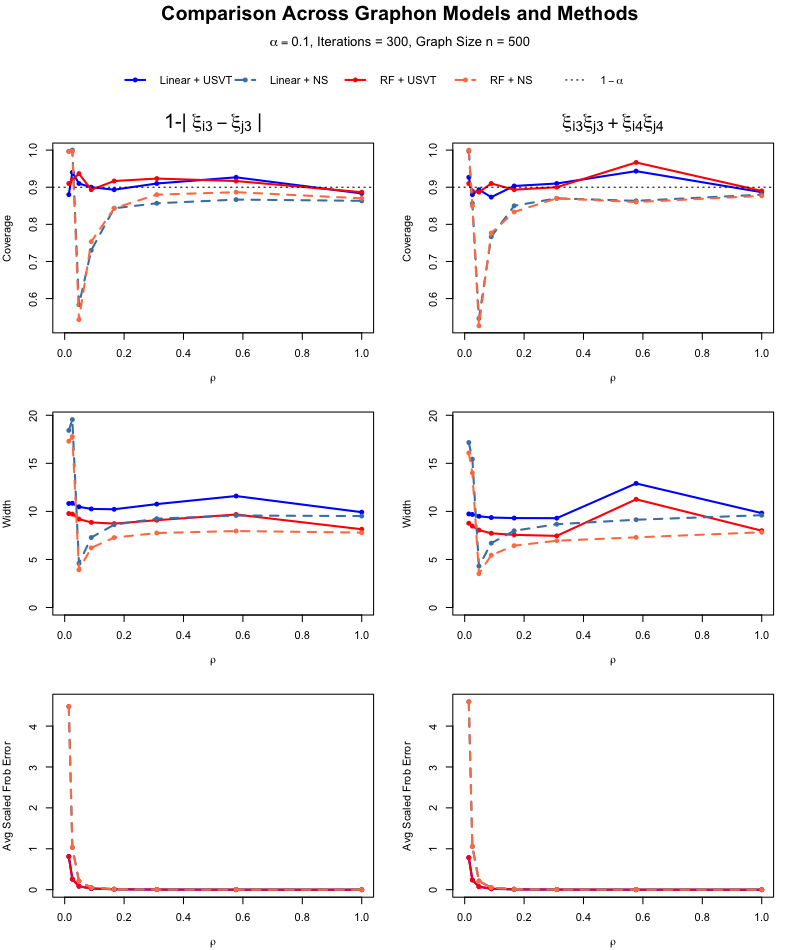}
    \caption{Top row: coverage. Middle row: width. Bottom row: averaged scaled Frobenius error $\| P-\hat{P} \|_{F}/\| P \|_F$ of the USVT and NS estimators across different sparsity levels and two graphon models.}
    \label{fig-ex3-dependency-innerprod-graphon}
\end{figure}

\subsubsection{Predicting Common Citations}
\label{sec-ex4-appendix}
In this section, we provide additional details for the Cora dataset experiment discussed in Section \ref{sec-ex4}. As described in the main text, the masking matrix $M$ is constructed from the output of a graph attention network (GATv2) \cite{brody2022how}. We now describe the architecture and training procedure used to obtain $M$.

The graph attention network takes the node feature vectors and the full citation graph $A$ as input, and outputs the predicted edge probabilities $\hat{p}_{ij}$ for all node pairs. The architecture consists of $2$ GATv2 layers with $64$ hidden dimensions, $4$ attention heads, and a dropout rate of $0.3$. The model is trained for $200$ epochs using the Adam optimizer with learning rate $0.001$ and weight decay $5 \times 10^{-4}$, achieving an AUC of $0.9650$ on the prediction $\hat{p}_{ij}$. The masking matrix $M$ is then obtained by thresholding $\hat{P}$ at $0.5$, i.e.,
\begin{align*}
    M_{ij} = \mathbbm{1} (\hat{p}_{ij} > 0.5),
\end{align*}
resulting in a density of $0.12$.

 \end{document}